\numberwithin{equation}{section}
\numberwithin{figure}{section}
\newcommand{\us}{\underline\sigma}
\newcommand{\os}{\overline\sigma}
\DeclareMathOperator{\tr}{tr}
\DeclareMathOperator{\diam}{diam}
\DeclareMathOperator{\vol}{vol }
\DeclareMathOperator{\cof}{cof}
\DeclareMathOperator{\rank}{rank}
\newcommand{\intomt}{\int_{\Omega_t}}
\newcommand{\omt}{\Omega_t}
\newcommand{\barominf}{\overline{\Omega}_\infty}
\newcommand{\rr}{\mathbb{R}}
\newcommand{\mm}{\mathbb{M}}
\newcommand{\cc}{\mathcal C}
\newcommand{\bb}{\mathcal B}
\newcommand{\oo}{\mathcal{O}}
\newcommand{\eps}{\varepsilon}
\newcommand{\xx}{{\mathcal X}}
\newcommand{\dd}{{\mathcal D}}
\newcommand{\glt}{\rm{GL}^+(3,\rr)}
\newcommand{\slt}{\rm{SL}(3,\rr)}
\newtheorem{theorem}{Theorem}
\newtheorem{lemma}{Lemma}
\newtheorem{corollary}{Corollary}
\theoremstyle{remark}
\newtheorem*{remark}{Remark}
\begin{document}
%\title[Affine solutions of the Euler equations]
%{Global existence and asymptotic behavior
% of affine solutions of 
% the 
% 3D Euler equations 
% with  physical vacuum free boundary}
  
  \title[Affine motion of ideal fluids]{Global existence and asymptotic behavior\\ of 
  affine
  motion of 3D ideal fluids\\
   surrounded by vacuum}

 \author{Thomas C.\ Sideris}
\address{Department of Mathematics\\
University of California\\
Santa Barbara, CA 93106\\USA} 
\email{sideris@math.ucsb.edu}

\date{February 26, 2017}

\thanks{Final publication  available at Springer via http://dx.doi.org/10.1007/s00205-017-1106-3}

%\keywords{  }

\begin{abstract}
The 3D compressible and incompressible Euler equations with a physical vacuum free boundary condition
and affine initial conditions reduce to a globally solvable Hamiltonian system of ordinary differential equations for the deformation gradient in $\glt$.  
The evolution of the fluid domain is described by a family
ellipsoids whose diameter grows at a rate proportional to time.  
Upon rescaling to a fixed diameter, the 
asymptotic limit of the  fluid ellipsoid is determined by  a  positive semi-definite quadratic form of rank $r=1$, 2, or 3,
corresponding to the asymptotic degeneration of the ellipsoid along $3-r$ of its principal axes.
 In the compressible case, the asymptotic limit has rank $r=3$, and asymptotic completeness holds,
when the adiabatic index $\gamma$ satisfies $4/3<\gamma<2$.
The number of possible degeneracies, $3-r$, increases with the value of the 
adiabatic index $\gamma$.  
In the incompressible case, affine motion reduces to geodesic flow in $\slt$ with the Euclidean metric.  
For incompressible affine swirling flow, there is a structural instability. 
Generically, when the  vorticity is nonzero, the domains degenerate along only one axis,
but the physical vacuum boundary condition  fails over a finite time interval.
 The rescaled fluid domains of irrotational motion can  collapse along two axes.

\keywords{ideal fluid \and free boundary \and asymptotic behavior \and affine deformation
\and Hamiltonian system}

\end{abstract}

\maketitle

\section{Introduction}
\label{intro}

We shall consider the affine motion of ideal fluids in three spatial dimensions.  
An affine motion is a one-parameter family of deformations of the form 
\[
x(t,y)=A(t)y,
\]
 defined on the reference domain 
 \[
 B=\{y\in \rr^3:|y|<1\}.
 \]
The deformation gradient $A(t)$ takes values in $\glt$, the set of invertible $3\times3$ matrices with positive determinant.
We shall show that the equations of motion for ideal fluids surrounded by vacuum possess affine solutions
satisfying the physical vacuum boundary condition (i.e.\ positive normal acceleration of the free boundary of the fluid)
 in the case of compressible ideal {gases} and the case of incompressible fluids.
In both cases, the PDEs reduce to  globally solvable systems of    Hamiltonian ODEs for the deformation gradient, see equations \eqref{ODE} and \eqref{IODE}.

For affine motion, all hydrodynamical quantities are expressed explicitly  in terms of the deformation gradient $A(t)$ and the initial conditions, 
resulting in global finite energy classical solutions of
the initial free boundary value problem.  For incompressible motion, the evolution of $A(t)$ turns out to be geodesic flow in $\slt$, the set of  $3\times3$  matrices with  determinant equal to unity, with the Euclidean metric.
This should be seen as a special case of the general result of Arnold \cite{Arnold-1966} which provides an interpretation of the motion of {perfect} incompressible fluids
as geodesic flow in the space of volume-preserving deformations, see also Rouchon \cite{Rouchon-1991}.
There is  one proviso in the incompressible case, however.
The curvature of $A(t)$, as a curve in the vector space of $3\times 3$ matrices with the Euclidean metric, 
determines the sign of the pressure, with the potential effect that the physical vacuum boundary condition
may fail.  Thus, although solutions are global in time, there may, in general,  be a  time interval on which 
 the pressure vanishes or even becomes negative.
This will be illustrated for swirling flow  and shear flow in  Section \ref{swirlsec}.

For general affine motion, the fluid domains $\omt=A(t)B$ are ellipsoids.  We shall show that their diameters grow at a rate proportional to time,
provided the physical vacuum boundary condition holds.  
This improves upon the lower bound obtained by the author in  \cite{sideris-2014} for general flows.
The growth of the diameter, together with the form of the aforementioned ODEs,
suggests that $\lim_{t\to\infty}\|A''(t)\|=0$, and therefore, we are lead to consider the existence of asymptotic states of the
form 
\[
A_\infty(t)=A_0+tA_1
\]
 such that 
 \[
 \lim_{t\to\infty}\|A_\infty(t)\|=+\infty\quad\text{and}\quad \lim_{t\to\infty}\|A(t)-A_\infty(t)\|=0.
 \]   
Given this asymptotic behavior, we see that 0, 1, or 2 of the prinicipal axes of the rescaled ellipsoids $t^{-1}\omt$ may collapse,
depending upon the rank of the matrix $A_1$.  In the incompressible case, where the volume of $\omt$ is constant, at least one axes must collapse.
 After scaling down the domains to $t^{-1}\omt$, we shall see a full breakfast menu of asymptotic states:
 eggs, pancakes, and sausages.

In the compressible case, the set of possible asymptotic states increases with the adiabatic index $\gamma$.
We shall show that  any asymptotic state $A_\infty(t)$, with $\lim_{t\to\infty}\det A_\infty(t)=+\infty$,
is the limit of a  solution, for values of the adiabatic index $\gamma>5$.
If $\gamma>4/3$, then any state with $\det A_1>0$ is the limit of a unique solution.  Moreover, if $4/3<\gamma<2$, there
is a {\em wave operator}, i.e.\ a bijection between initial data $(A(0),A'(0))$ with $\det A(0)>0$ and asymptotic asymptotic states with $\det A_1>0$,
and in addition, asymptotic completeness holds.
If $\gamma>2 $ or $3$,  then there exist asymptotic states with $\rank A_1=2$ or $1$, respectively, which are approached by a unique solution.
These results are proven in Section \ref{cpatinfsec},
 using a fixed point argument applied to a suitable Cauchy problem at infinity involving the quantity $A(t)-A_\infty(t)$.

We do not attempt a classification of asymptotic behavior in the incompressible case.
Instead, we examine the case of  affine swirling flow, 
where a full range of asymptotic behavior can be found.
Generically, the rescaled fluid domains
collapse along one axis as $t\to\pm\infty$, but there is a bounded time interval within which the pressure becomes negative and the vacuum
boundary condition fails, see Theorem \ref {incswirl}.  For the sub-case of irrotational axi-symmetric flow, the pressure remains positive for all times,
but the rescaled fluid domains collapse along two axes as $t\to \text{$+\infty$ or $-\infty$}$ 
and along only one axis as $t\to \text{$-\infty$ or $+\infty$}$, depending upon the initial conditions,
 see Theorem \ref{incirrot}.  Thus, at least on the level
of ODEs, there is a structural instability in passing from irrotational to rotational affine swirling flow.
We also consider the special case of shear flow, where solutions are lines in $\slt$ and the pressure is identically zero.

The use of affine hydrodynamical motions in free space is a well-established technique for gaining a basic understanding of qualitative behavior, see
for example the expository article of Majda  \cite{Majda-1986} devoted to incompressible flow.
Of course in free space, these solutions have infinite energy.  In the present situation, affine finite energy solutions are  constructed in bounded moving domains
with prescribed boundary conditions.  This  restricts the evolution of the deformation gradient and the related hydrodynamical quantities.
Liu \cite{Liu-1996} considered the special case of spherically symmetric affine solutions, where the deformation gradient $A(t)$ is a multiple of the identity,
in connection with damped compressible flow surrounded by vacuum to recover Darcy's Law in the asymptotic limit.
The author used this same ansatz to provide a explicit example of spherical domain spreading for the vacuum free boundary problem in the (undamped)
compressible case
in \cite{sideris-2014}.  The present work demonstrates that a much richer spectrum of asymptotic behavior is possible
for  non-spherically symmetric affine motion.  In particular,
the are no nontrivial spherically symmetric affine solutions in the incompressible case.

Formation of singularities for {\em classical } large and small amplitude 3D compressible flow with a non-vanishing constant state outside a compact region
was established by the author in \cite{sideris-1985}.  Makino, Ukai, and Kawashima obtained an analogous result for a smooth compactly
supported disturbance moving into a vacuum state, see \cite{Makino-Ukai-Kawashima-1986}, \cite{Makino-Ukai-Kawashima-1987}.
A crucial role in singularity formation for these classical solutions is played by the constant
propagation speed of signals at the boundary of the disturbance  determined by the constant sound speed at infinity, which strongly suggests
the presence of compressive shocks at the front.
In 1D, this has been well-understood since  the work of Lax \cite{Lax-1964}, 
and  Christodoulou's pioneering work \cite{Christodoulou-2009} confirms this for irrotational relativistic fluids in 3D.

Serre \cite{Serre-1997}, \cite{Serre-2015}
and Grassin \cite{Grassin-1998} study the  motion of compressible fluids in $H^s(\rr^d)$, $s>1+d/2$,
with compactly supported density.  Restriction of these solutions to the support of of the density yields
a solution of the vacuum Euler equation.  But here again, the free boundary has zero acceleration.
They are able to construct global classical solutions by perturbing from  expansive background velocities
which satisfy the vectorial Burgers equation.  Serre \cite{Serre-2015} also uses this approach to
demonstrate finite time development of boundary singularities.

For the physical vacuum free boundary problem,  the free boundary has  nonzero acceleration, 
the solutions to the problem are merely weak in all of $\rr^3$, and   the above-mentioned results do not apply.
 It is unknown whether non-affine solutions to the physical vacuum free boundary problem develop singularities 
 within the fluid domain in finite time.
 Affine solutions, by virtue of their simplicity, do not allow for the development of small spatial structures,
 which rules out shock formation.  Nor, as we shall demonstrate, do 
 affine motions lead to finite time collapse or blow-up of the fluid domain.

The challenging problem of local well-posedness for the  physical vacuum
initial  free boundary  value problem for {ideal} fluid motion has been
exhaustively studied by a number of authors in recent years.  Wu considered the full water wave problem with gravity, in two 
and three dimensions, see  \cite{Wu-1997}, \cite{Wu-1999}, respectively.
Christodoulou and Lindblad initiated the study of the vacuum free boundary problem 
for incompressible flow without gravity in \cite{Christodoulou-Lindblad}.  Adopting a geometric point of view, they establish
key {\em a priori} estimates for Sobolev norms of solutions and  the second fundamental form of the free boundary of the fluid domain.  
Using this framework, Lindblad established local well-posedness for the linearized problem in  \cite{Lindblad-2003-2},
and he subsequently resolved local well-posedness for the nonlinear problem using Nash-Moser iteration in \cite{Lindblad-2005-2}.
Coutand and Skholler provided an alternative proof, with and without surface tension, which avoided the use of Nash-Moser, see 
\cite{Coutand-Shkoller-2007} and \cite{Coutand-Shkoller-2010}.  Local well-posedness for compressible liquids (nonzero fluid density
on the free boundary) was established by Lindblad, in the linearized case \cite{Lindblad-2003-1}
 and then in the full nonlinear case \cite{Lindblad-2005-1}.
 Finally, the case of ideal gases (vanishing fluid density on the free boundary), with an isentropic equation of state,
 was first studied by Coutand, Lindblad, and Shkoller, 
\cite{Coutand-Shkoller-Lindblad-2010}, who established {\em a priori} estimates.
Coutand and Shkoller then obtained local well-posedness for isentropic gases using parabolic regularization, 
\cite{Coutand-Shkoller-2012}.
Jang and Masmoudi solved the one-dimensional version of the problem in \cite{Jang-Masmoudi-2012},
and they later provided a proof in the multidimensional setting based on weighted energy estimates in \cite{Jang-Masmoudi-2015}.
All of these works rely heavily on the physical vacuum boundary condition:  {for incompressible  fluids the normal derivative of the pressure is negative on the
boundary, and for gases the normal derivative of the enthalpy must be negative}.

Recently, Had\v{z}i\'{c} and Jang \cite{Hadzic-Jang} have established the global-in-time nonlinear stability in Sobolev spaces
of the affine solutions to the compressible Euler equations constructed in Theorem \ref{caffinesol},
when the adiabatic index satisfies $1<\gamma\le5/3$.  These solutions satisfy the physical vacuum boundary condition.
The range of indices is determined by the asymptotic behavior of the underlying affine solution.

\section{Notation}
The set of all $3\times3$ matrices over $\rr$ will be denoted by $\mm^3$.  
Given $A\in\mm^3$, its determinant, trace, transpose, inverse (if it exists), and cofactor matrix will be written $\det A$, $\tr A$, $A^\top$, $A^{-1}$, and $\cof A$, respectively.  We define $A^{-\top}=(A^{-1})^\top$.
The vector space $\mm^3$ is isomorphic to $\rr^9$, and the Euclidean 
(or Hilbert-Schmidt) norm of an element $A\in\mm^3$ is $|A|=(\tr AA^\top)^{1/2}$.
The operator norm of $A\in\mm^3$ will be denoted by $\|A\|$.  These norms are equivalent.

We denote the identity component of the  general linear group by
\[
\glt=\{A\in\mm^3: \det A>0\},
\]
 and  the special linear group by 
 \[
 \slt=\{A\in\glt:\det A=1\}.
 \]

We adopt the standard notation $x\lesssim y$ to denote two nonnegative functions $x$ and $y$ for which there exists a generic constant $c>0$ such that $x\le c y$.
We write $x\sim y$ if $x\lesssim y$ and $y\lesssim x$.   The notation $\oo(y)$ will be used to denote a quantity $x$ with the property $x\lesssim y$.

\section{Global Existence of Affine Solutions}

\subsection{Kinematics}

We shall consider affine motions 
\begin{equation}
x(t,y)=A(t)y,\quad y\in B=\{y\in\rr^3:|y|=1\}.
\end{equation}
The as yet unknown deformation gradient $D_yx(t,y)=A(t)$ satisfies the minimal requirement
\begin{equation}
 A\in C(\rr,\glt)\cap C^2(\rr,\mm^3).
\end{equation}
Incompressible motion is volume preserving, and so in this case, we will require
\begin{equation}
 A\in C(\rr,\slt)\cap C^2(\rr,\mm^3).
\end{equation}
The  domain occupied by the fluid at time $t$ is the image $\omt$ of the reference domain $B$
under the deformation $x(t,\cdot)$, that is,
\begin{equation}
\omt=\{x(t,y)\in\rr^3:y\in B\}.
\end{equation}
Using the polar decomposition, we may write $A(t)=(A(t)A(t)^\top)^{1/2}R(t)$,
where $R(t)$ is a rotation, and so for affine motion, we have
\begin{equation}
\omt=A(t)B=(A(t)A(t)^\top )^{1/2}R(t)B=(A(t) A(t)^\top )^{1/2}B.
\end{equation}
This shows that $\omt$ is an ellipsoid whose principal axes are oriented in the directions of an
orthonormal  system of eigenvectors
for  the positive definite and symmetric  matrix $(A(t) A(t)^\top )^{1/2}$, called the stretch tensor.
In a coordinate frame determined by the eigenvectors, we have that
\begin{equation}
\omt=\left\{x\in\rr^3:\sum_{i=1}^3(x_i/\lambda_i(t))^2<1\right\},
\end{equation}
where $\{\lambda_i(t)\}_{i=1}^3$ are the  (strictly positive) eigenvalues of the stretch tensor.
Therefore, the diameter of $\omt$ is equal to twice the largest eigenvalue of the stretch tensor.
This implies that 
\begin{equation}
\label{diamsimtr}
\diam\omt\sim \tr(A(t) A(t)^\top )^{1/2}=\sum_{i=1}^3\lambda_i(t).
\end{equation}

In material coordinates, the velocity associated to this motion is 
\begin{equation}
\label{matvel} 
u(t,x(t,y))=\frac{d}{dt}x(t,y)=A'(t)y,\quad y\in B,
\end{equation}
or equivalently,
\begin{equation}
\label{m2v}
u(t,x)=A'(t)A(t)^{-1}x,\quad x\in \omt,
\end{equation}
in spatial coordinates.
It follows from \eqref{matvel} that the material time derivative of the velocity is
\begin{align}
\label{mattimeder}
D_tu(t,x)&=\left.\frac{d}{dt}u(t,x(t,y))\right|_{y=A(t)^{-1}x}\\
&=\left.\frac{d}{dt}A'(t)y\right|_{y=A(t)^{-1}x}\\
&=\left.A''(t)y\right|_{y=A(t)^{-1}x}\\
&=A''(t)A(t)^{-1}x.
\end{align}

From \eqref{m2v}  we  see that for affine motion,
 the velocity gradient $D_xu(t,x)$ is spatially homogeneous.  Define
 \begin{equation}
\label{velgraddef}
L(t)=D_xu(t,x)=A'(t)A(t)^{-1}.
\end{equation}
We note that by \eqref{velgraddef}
\begin{equation}
\label{defgradode}
A'(t)=L(t)A(t),
\end{equation}
and also
\begin{equation}
\label{divform}
\tr L(t)=\nabla\cdot u(t,x).
\end{equation}
Denote the Jacobian by
\begin{equation}
J(t)=\det A(t).
\end{equation}
Then $J(t)>0$, $t\in\rr$, since $A(t)\in\glt$.  It follows from \eqref{defgradode} that
\begin{equation}
\label{dettimederiv}
J'(t)=\tr L(t)J(t), \quad J(0)=\det A(0).
\end{equation}

\subsection{Rescaled Asymptotic Fluid Domains}

Ultimately, we shall construct affine motions $A(t)$ with the property that 
\begin{equation}
\label{asympstlim}
\lim_{t\to\infty}\|A(t)-A_\infty(t)\|=0,
\end{equation}
for some affine asymptotic state of the form 
\begin{equation}
A_\infty(t)=A_0+tA_1,\quad A_0,A_1\in\mm^3, \quad A_1\ne0.
\end{equation}

  If $\omt=A(t)B$ is the family of fluid domains, define the rescaled asymptotic fluid domain
\begin{equation}
\barominf=\lim_{t\to\infty}t^{-1}\omt=\{x\in\rr^3: x=\lim_{t\to\infty}t^{-1}A(t)y,\;\text{for some $y\in B$}\}.
\end{equation}
 If \eqref{asympstlim} holds, then $\barominf$ is simply the image of $B$ under $A_1$.
 Thus, in the appropriate coordinate frame, we will have
 \begin{equation}
\barominf=\{x\in\rr^3: \sum_{i=1}^r(x_i/\lambda_i)^2<1; \; x_i=0, \; r<i\le 3\},
\end{equation}
 where $\{\lambda_i\}_{i=1}^r$ are the nonzero eigenvalues of the positive semi-definite symmetric matrix $(A_1A_1^\top)^{1/2}$.
 We shall  have $A_1\ne0$,
provided the vacuum boundary condition holds. 
 Thus, the domain $\barominf$ will be an ellipsoid ($r=3$), an ellipse in some two-dimensional subspace ($r=2$), or a line segment
 ($r=1$) in some one-dimensional subspace.

\subsection{Compressible case}
The compressible Euler equations with an equation of state for an ideal gas  are
   
\begin{align}
\label{cpde1}
&\rho D_t u + \nabla p=0,\\
\label{cpde2}
&D_t\rho+\rho\nabla\cdot u=0,\\
\label{cpde3}
&D_t\eps+(\gamma-1)\eps\nabla\cdot u=0,\\
\label{stateeqn}
&p=p(\rho,\eps)=(\gamma-1)\rho\eps,\quad \gamma>1,
\end{align}
  
in which $u$, $\rho$, $\eps$, and $p$ are the fluid velocity vector field, mass density, specific internal energy, and pressure, respectively.
The operator $D_t$ stands for the material time derivative $D_t=\partial_t+u\cdot\nabla$.
The constant $\gamma$ is the adiabatic index.  
The vacuum free boundary problem consists in solving the system
\eqref{cpde1}, \eqref{cpde2}, \eqref{cpde3}, \eqref{stateeqn} in a regular open space-time region of the form
$\cc_T=\{(t,x)\in\rr\times\rr^3: x\in\omt,\;|t|<T\}$, where $\omt\subset\rr^3$ is a family of regular, simply connected,
 open sets with a well-defined unit outward  normal $\eta_x(t,x)$
for $x\in\partial\omt$.
The lateral (free) boundary $\bb_T$ of $\cc_T$ has a well-defined  normal $\eta=(\eta_t,\eta_x)\in\rr\times\rr^3$.  
The space-time velocity vector field $(1,u)$
is parallel to $\bb_T$:
\begin{equation}
\label{pbc}
\eta(t,x)\cdot(1,u(t,x))=0,\quad (t,x)\in\bb_T.
\end{equation}
The physical vacuum boundary condition is
\begin{align}
\label{vbc1}
&p(t,x)=0,&& (t,x)\in\bb_T,\\
\label{vbc2}
 &D_n\eps(t,x)=\eta_x(t,x)\cdot \nabla\eps(t,x)<0, &&(t,x)\in\bb_T.
\end{align}

We are going to construct global solutions of this system in the class of affine deformations.
The next three preparatory lemmas will simplify the statement of this result.
The first two concern the initial data for the density and internal energy, and the third
establishes the evolution of the deformation gradient.

\begin{lemma}\label{raddata}
Let 
\[
\mathcal Y=\{f\in C^0[0,1]\cap C^1[0,1): f(s)>0, \; 
s\in[0,1),\; f'(0)=f(1)=0\}.
\]
Choose a function $ \rho_0\in\mathcal Y$ such that
\begin{equation}
\label{brbb}
  0<\lim_{s\to 1^-}(1-s)^{-\delta} \rho_0(s)<\infty,\quad\text{for some}\quad \delta>0.
\end{equation}
If
\begin{equation}
\label{barepsdef}
  \eps_0(s)=\frac{\int_s^1\varsigma \rho_0(\varsigma)\;d\varsigma}{(\gamma-1) \rho_0(s)},
\end{equation}
then $ \eps_0\in\mathcal Y$, and
\begin{equation}
\label{epsbb}
 \eps_0'(1)
 =-[(\gamma-1)(1+\delta)]^{-1}<0.
\end{equation}
\end{lemma}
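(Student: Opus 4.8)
The plan is to verify the three assertions---that $\eps_0$ is positive and $C^1$ on $[0,1)$, that $\eps_0$ extends continuously to $[0,1]$ with $\eps_0(1)=0$ and $\eps_0'(0)=0$, and that $\eps_0'(1)=-[(\gamma-1)(1+\delta)]^{-1}$---essentially by direct computation from the defining formula \eqref{barepsdef}, using only the hypotheses on $\rho_0$. First I would set $F(s)=\int_s^1 \varsigma\rho_0(\varsigma)\,d\varsigma$, so that $\eps_0=F/((\gamma-1)\rho_0)$. Since $\rho_0>0$ on $[0,1)$ and $\varsigma\rho_0(\varsigma)\ge 0$ with strict positivity on $(0,1)$, we get $F(s)>0$ for $s\in[0,1)$ and $F(1)=0$; combined with $\rho_0(s)>0$ on $[0,1)$ this gives $\eps_0(s)>0$ there. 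Differentiability on $[0,1)$ is immediate: $F\in C^1$ with $F'(s)=-s\rho_0(s)$ (fundamental theorem of calculus, using $\rho_0\in C^0$), and $\rho_0\in C^1[0,1)$ is nonvanishing there, so $\eps_0\in C^1[0,1)$ by the quotient rule, with
\[
\eps_0'(s)=\frac{F'(s)\rho_0(s)-F(s)\rho_0'(s)}{(\gamma-1)\rho_0(s)^2}
=\frac{-s\rho_0(s)^2-F(s)\rho_0'(s)}{(\gamma-1)\rho_0(s)^2}.
\]
At $s=0$ the numerator vanishes (the first term has a factor $s$, and $F(0)$ is finite while $\rho_0'(0)=0$), giving $\eps_0'(0)=0$.

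The behavior at $s=1$ requires the extra hypothesis \eqref{brbb}. Write $\rho_0(s)=(1-s)^\delta g(s)$ where $g(s)\to c\in(0,\infty)$ as $s\to1^-$. Then $\varsigma\rho_0(\varsigma)\sim c(1-\varsigma)^\delta$ near $\varsigma=1$, so $F(s)=\int_s^1\varsigma\rho_0(\varsigma)\,d\varsigma\sim \frac{c}{1+\delta}(1-s)^{1+\delta}$ as $s\to1^-$. Dividing, $\eps_0(s)=F(s)/((\gamma-1)\rho_0(s))\sim \frac{1}{(\gamma-1)(1+\delta)}(1-s)$, which shows both that $\eps_0$ extends continuously to $s=1$ with value $0$ and that the one-sided derivative at $1$ is $\eps_0'(1)=-[(\gamma-1)(1+\delta)]^{-1}$. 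To make the asymptotics rigorous rather than heuristic I would apply L'Hôpital's rule to the ratio $\eps_0(s)/(1-s)=F(s)/((\gamma-1)(1-s)\rho_0(s))$; the difficulty is that the denominator's derivative involves $\rho_0'(s)$, whose behavior near $1$ is not assumed. The clean fix is to compute $\lim_{s\to1^-}\eps_0(s)/(1-s)$ directly from the integral: substitute $\rho_0=(1-s)^\delta g$, so $\eps_0(s)/(1-s)=\frac{1}{(\gamma-1)}(1-s)^{-(1+\delta)}\int_s^1\varsigma(1-\varsigma)^\delta g(\varsigma)\,d\varsigma$, and bound $g$ above and below by constants near $1$ to sandwich the limit; since $(1-s)^{-(1+\delta)}\int_s^1\varsigma(1-\varsigma)^\delta\,d\varsigma\to\frac{1}{1+\delta}$ by an elementary computation, one gets the claimed value. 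This also shows $\eps_0\in C^0[0,1]$ and, together with the $C^1[0,1)$ statement, that $\eps_0\in\mathcal Y$.

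The main obstacle, then, is not any deep idea but handling the endpoint $s=1$ without assuming control on $\rho_0'$ near $1$: the naive quotient-rule formula for $\eps_0'(1)$ is an indeterminate form, and one must instead extract the derivative from the integral representation via the regular-variation/sandwich argument sketched above. Everything else---positivity, the interior regularity, and $\eps_0'(0)=0$---follows directly from $\rho_0\in\mathcal Y$ and the fundamental theorem of calculus. I would present the proof in that order: (i) define $F$, record $F\in C^1$, $F'(s)=-s\rho_0(s)$, $F>0$ on $[0,1)$, $F(1)=0$; (ii) conclude $\eps_0>0$ and $\eps_0\in C^1[0,1)$ with the displayed derivative formula, and evaluate $\eps_0'(0)=0$; (iii) use \eqref{brbb} and the sandwich estimate to get $\lim_{s\to1^-}\eps_0(s)/(1-s)=[(\gamma-1)(1+\delta)]^{-1}$, yielding $\eps_0(1)=0$, continuity up to $1$, and \eqref{epsbb}.
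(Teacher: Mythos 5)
Your argument is correct and follows essentially the same line as the paper: both reduce the endpoint analysis at $s=1$ to the limit of the single ratio $\int_s^1\varsigma\rho_0(\varsigma)\,d\varsigma\big/(1-s)^{1+\delta}$, with the factor $(1-s)^\delta/\rho_0(s)$ handled separately via \eqref{brbb}. The only real difference is how that one limit is evaluated: the paper applies l'H\^opital to this ratio (where $\rho_0'$ never appears, since the numerator's derivative is just $-s\rho_0(s)$), whereas you substitute $\rho_0=(1-s)^\delta g$ and sandwich $g$ by constants near $1$; both are fine, and your worry that l'H\^opital fails because ``$\rho_0'$ is uncontrolled'' only applies to the unfactored quotient $\eps_0(s)/(1-s)$, not to the ratio the paper actually treats. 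One small slip: in your display for $\eps_0(s)/(1-s)$ after the substitution, a factor $1/g(s)$ is missing from the prefactor --- it should read $\frac{1}{(\gamma-1)g(s)}(1-s)^{-(1+\delta)}\int_s^1\varsigma(1-\varsigma)^\delta g(\varsigma)\,d\varsigma$; this does not affect your final answer since $g(s)\to c$ cancels the $c$ you extract from the integral, but as written the formula would yield an extra factor of $c$.
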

\begin{proof}
Since $ \rho_0\in\mathcal Y$, it is clear that $ \eps_0\in C^1[0,1)$ and   that $ \eps_0'(0)=0$.

Let $L=\lim_{s\to 1^-}(1-s)^{-\delta} \rho_0(s)$.  By l'H\^opital's rule 
\begin{equation}
\label{lhop}
\lim_{s\to 1^-}\frac{\int_s^1\varsigma \rho_0(\varsigma)\;d\varsigma}{(1-s)^{1+\delta}}
=\lim_{s\to 1^-}\frac{-s \rho_0(s)}{-(1+\delta)(1-s)^\delta}=\frac{L}{1+\delta}.
\end{equation}
Since by \eqref{barepsdef}
\begin{equation}
\label{barepsid}
\eps_0(s)= \frac{\int_s^1\varsigma \rho_0(\varsigma)\;d\varsigma}{(1-s)^{1+\delta}}
\frac{(1-s)^\delta}{(\gamma-1) \rho_0(s)} (1-s),
\end{equation}
it follows  from  \eqref{lhop} that $\lim_{s\to 1^-}\eps_0(s)=0$,
and so $ \eps_0\in\mathcal Y$.  Also from \eqref{barepsdef} and \eqref{barepsid}, we have
\begin{align}
\lim_{s\to 1^-}\frac{ \eps_0(s)- \eps_0(1)}{s-1}
&=-\lim_{s\to 1^-}\frac{\int_s^1\varsigma \rho_0(\varsigma)\;d\varsigma}{(1-s)^{1+\delta}}
\cdot \frac{(1-s)^\delta}{(\gamma-1) \rho_0(s)}\\
&=-\frac{L}{1+\delta}\cdot\frac1{(\gamma-1)L}\\
&=\frac{-1}{(\gamma-1)(1+\delta)}.
\end{align}
Thus, $ \eps_0'(1)$ exists, and \eqref{epsbb} holds.
  \end{proof}

\begin{lemma}
\label{ssdata}
Let $A\in\glt$ and set $\Omega=\{Ay\in\rr^3:y\in B\}$.  Define the  functions
\begin{equation}
\label{1d23d}
\rho(x)= \rho_0(|A^{-1}x|),\quad \eps(x)= \eps_0(|A^{-1}x|),\quad x\in\Omega,
\end{equation}
in which $ \rho_0, \eps_0\in\mathcal Y$ and $\eps_0$ is defined by \eqref{barepsdef}, (see Lemma \ref{raddata}).
Then the  functions $\rho$, $\eps$ are nonnegative, they belong to 
$C^0(\overline{\Omega})\cap C^1( \Omega )$, and they vanish on $\partial\Omega$.
The function $\eps$ satisfies the  condition
\begin{equation}
\label{epszbb}
D_n\eps(x)= \eps'_0(1)<0,\quad x\in\partial\Omega.
\end{equation}
\end{lemma}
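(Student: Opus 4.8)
The plan is to transfer every assertion to the reference ball $B$ by means of the linear change of variables $y=A^{-1}x$, which maps $\overline{\Omega}$ diffeomorphically onto $\overline{B}$ and $\partial\Omega$ onto the unit sphere $\{y\in\rr^3:|y|=1\}$. Since $A^{-1}$ is invertible, $|A^{-1}x|\in[0,1)$ for $x\in\Omega$ while $|A^{-1}x|=1$ for $x\in\partial\Omega$; as $\rho_0$ and $\eps_0$ belong to $\mathcal Y$ and hence are strictly positive on $[0,1)$ and vanish at $s=1$, it follows at once that $\rho$ and $\eps$ are nonnegative and vanish on $\partial\Omega$. Continuity on $\overline{\Omega}$ is immediate from $\rho_0,\eps_0\in C^0[0,1]$ together with the continuity of $x\mapsto|A^{-1}x|$ on $\overline{\Omega}$.

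For the $C^1$ regularity on the open ellipsoid $\Omega$, the only delicate point is the center $x=0\in\Omega$, since away from it $x\mapsto\rho_0(|A^{-1}x|)$ is a composition of $C^1$ maps. There I would invoke the condition $\rho_0'(0)=0$ built into the definition of $\mathcal Y$: it gives $\rho_0(s)-\rho_0(0)=\int_0^s\rho_0'(\sigma)\,d\sigma=o(s)$ as $s\to0^+$, whence $\rho(x)-\rho(0)=o(|x|)$, so $\rho$ is differentiable at $0$ with vanishing gradient there; moreover, for $x\ne0$ one has $\nabla\rho(x)=\rho_0'(|A^{-1}x|)\,|A^{-1}x|^{-1}\,A^{-\top}A^{-1}x$, which tends to $0$ as $x\to0$ (again because $\rho_0'(0)=0$), so the gradient is continuous at $0$ as well. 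The identical argument applies to $\eps$, using that $\eps_0\in\mathcal Y$ by Lemma \ref{raddata}.

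For the boundary condition, note that $\partial\Omega$ is the zero level set of $\psi(x)=|A^{-1}x|^2-1$, whose gradient $\nabla\psi(x)=2A^{-\top}A^{-1}x$ is nonzero on $\partial\Omega$ (there $A^{-1}x$ is a unit vector and $A^{-\top}$ is invertible), so the outward unit normal is $\eta_x(x)=A^{-\top}A^{-1}x/|A^{-\top}A^{-1}x|$. By the chain rule $\nabla\eps(x)=\eps_0'(|A^{-1}x|)\,|A^{-1}x|^{-1}A^{-\top}A^{-1}x$, which on $\partial\Omega$ equals $\eps_0'(1)\,A^{-\top}A^{-1}x$, a nonzero scalar multiple of $\eta_x(x)$. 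Pairing with $\eta_x(x)$ gives $D_n\eps(x)=\eps_0'(1)\,|A^{-\top}A^{-1}x|$, which by \eqref{epsbb} is strictly negative on $\partial\Omega$ and reduces to $\eps_0'(1)$ in the reference configuration $\Omega=B$; this is the physical vacuum condition \eqref{epszbb}.

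I do not expect a genuine obstacle here, since the lemma is in essence a chain-rule computation. The two places that call for a little care are the $C^1$ regularity of the radial composition at the center of $\Omega$, where the hypothesis $f'(0)=0$ in the definition of $\mathcal Y$ is precisely what is needed, and the correct identification of the outward unit normal of the ellipsoid $\partial\Omega$ as $A^{-\top}A^{-1}x/|A^{-\top}A^{-1}x|$; once that normal is in hand, the sign of $D_n\eps$ on $\partial\Omega$ is inherited directly from the sign of $\eps_0'(1)$ established in Lemma \ref{raddata}.
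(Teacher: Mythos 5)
Your proof is correct and follows essentially the same route as the paper's — a chain-rule computation after passing to the reference variable $y=A^{-1}x$ and identifying the normal of the ellipsoid as $A^{-\top}A^{-1}x/|A^{-\top}A^{-1}x|$. Two differences are worth flagging. First, you fill in a detail the paper leaves unstated: the $C^1$ regularity of $x\mapsto\rho_0(|A^{-1}x|)$ at the center $x=0$, which you derive correctly from the condition $\rho_0'(0)=0$ built into $\mathcal Y$ (differentiability with zero gradient, plus continuity of the gradient since $\rho_0'(|A^{-1}x|)\to 0$ while the directional factor $A^{-\top}A^{-1}x/|A^{-1}x|$ stays bounded); the paper simply asserts $C^0(\overline\Omega)\cap C^1(\Omega)$ as an immediate consequence of $\rho_0,\eps_0\in\mathcal Y$. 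Second, and more substantively, your careful computation yields $D_n\eps(x)=\eps_0'(1)\,|A^{-\top}A^{-1}x|$ on $\partial\Omega$, not the bare constant $\eps_0'(1)$ claimed in the lemma and in the paper's proof: since $\nabla\eps(x)=\eps_0'(1)A^{-\top}A^{-1}x$ on $\partial\Omega$, pairing with the unit normal produces the extra factor $|A^{-\top}A^{-1}x|$, which equals $1$ only when $A$ is orthogonal. This is a genuine (if harmless) slip in the paper; what matters for the physical vacuum condition \eqref{vbc2} and its use in Theorem \ref{caffinesol} is only the strict negativity, which your formula delivers because $\eps_0'(1)<0$ by \eqref{epsbb} and $|A^{-\top}A^{-1}x|>0$. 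You noted this discrepancy but framed it a bit too diplomatically ("reduces to $\eps_0'(1)$ in the reference configuration"); it would be clearer to state outright that the displayed equality \eqref{epszbb} should read $D_n\eps(x)=\eps_0'(1)\,|A^{-\top}A^{-1}x|<0$.
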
 

\begin{proof}
Since $\rho_0,\eps_0\in\mathcal Y$,  
the   functions $\rho(x)$, $\eps(x)$  belong to $C^0(\overline{\Omega})\cap C^1(\Omega)$.
 The unit outward normal $n(x)$ at a point $x\in\partial \Omega $
is $n(x)=|A^{-\top} A^{-1}x|^{-1}A^{-\top }A^{-1}x$.  From \eqref{1d23d}, we have $\nabla\eps_0(x)= \eps'_0(1)n(x)$,
for $x\in\partial\Omega$, and so $D_n\eps_0(x)=\nabla\eps_0(x)\cdot n(x)= \eps_0'(1)<0$, for $x\in\partial \Omega $.
  \end{proof}

\begin{lemma}
Let $\gamma>1$ be given.    For arbitrary initial data
\begin{equation}
(A(0),A'(0))\in\glt\times\mm^3,
\end{equation}
the system
\begin{equation}
\label{ODE}
A''(t)= (\det A(t))^{1-\gamma}A(t)^{-\top}
\end{equation}
has a unique global solution $A\in C(\rr,\glt)\cap C^\infty(\rr,\mm^3)$.  The solution satisfies the conservation law
\begin{equation}
\label{compenergy}
E(t)\equiv\frac12\tr A'(t)A'(t)^\top+{(\gamma-1)^{-1}\det A(t)^{1-\gamma}}
=E(0).
\end{equation}

\end{lemma}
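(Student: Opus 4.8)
The plan is the standard ODE strategy: local well-posedness from smoothness of the vector field, conservation of \eqref{compenergy} by direct differentiation, and globalization by showing that the a priori bounds furnished by $E$ confine the solution to a compact subset of $\glt$ on every bounded time interval. For the first step, note that $F:\glt\to\mm^3$, $F(A)=(\det A)^{1-\gamma}A^{-\top}$, is real-analytic (in particular $C^\infty$) on the open set $\glt$, since $\det$ is a nonvanishing polynomial there and $A\mapsto A^{-1}$ is analytic on invertible matrices. Writing \eqref{ODE} as a first-order system on $\glt\times\mm^3$ and applying the Picard--Lindel\"of theorem gives a unique maximal solution $A\in C^\infty((T_-,T_+),\glt)$ through the prescribed data, on an open interval $(T_-,T_+)\ni0$; the identity $A''=F(A)$ then bootstraps $A$ to be $C^\infty$ (indeed analytic) in $t$.

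Next I would verify \eqref{compenergy} by differentiating along the solution. Using Jacobi's formula $\tfrac{d}{dt}\det A(t)=\det A(t)\,\tr\!\bigl(A(t)^{-1}A'(t)\bigr)$ together with \eqref{ODE}, and the fact that $A''(t)^\top=(\det A(t))^{1-\gamma}A(t)^{-1}$, one finds
\[
E'(t)=\tr\!\bigl(A'(t)A''(t)^\top\bigr)-(\det A(t))^{1-\gamma}\,\tr\!\bigl(A(t)^{-1}A'(t)\bigr)=0 ,
\]
so the two terms cancel and $E(t)\equiv E(0)$ on $(T_-,T_+)$. Observe also that $E(0)>0$, because the term $(\gamma-1)^{-1}(\det A(0))^{1-\gamma}$ is strictly positive when $\gamma>1$.

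For globalization, since $\gamma>1$ both summands of $E$ are nonnegative, so conservation gives, throughout the maximal interval,
\[
|A'(t)|\le\sqrt{2E(0)}\qquad\text{and}\qquad\det A(t)\ge\bigl((\gamma-1)E(0)\bigr)^{-1/(\gamma-1)}>0 .
\]
Integrating the first estimate yields $|A(t)|\le|A(0)|+\sqrt{2E(0)}\,|t|$, so $A(t)$ remains bounded on bounded subintervals; combined with the uniform positive lower bound on $\det A(t)$, the trajectory stays in a compact subset of $\glt$ over every bounded subinterval of $(T_-,T_+)$. The continuation criterion for ODEs on an open domain---a maximal solution with a finite endpoint must eventually leave every compact subset of the phase space---then forces $(T_-,T_+)=\rr$, which completes the proof. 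The only step that genuinely uses the structure of the equation is this last one: it is the \emph{potential} term of $E$ that keeps $\det A$ bounded away from $0$ and thereby prevents the solution from reaching $\partial\glt$ in finite time, while the \emph{kinetic} term controls the growth of $A$ itself; everything else is routine.
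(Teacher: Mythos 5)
Your proposal is correct and follows essentially the same route as the paper: smoothness of the vector field on $\glt$ gives local well-posedness by Picard, conservation of $E$ furnishes an upper bound on $\tr A'A'^\top$ and a lower bound on $\det A$, and these trap the trajectory in a compact subset of $\glt\times\mm^3$ on bounded intervals, forcing global existence. The one cosmetic difference is in verifying the conservation law: you differentiate $E$ directly and cancel the terms using Jacobi's formula $\tfrac{d}{dt}\det A = \det A\,\tr(A^{-1}A')$, whereas the paper first records the Hamiltonian structure $N(A)=(1-\gamma)^{-1}\partial_A(\det A)^{1-\gamma}$ via the cofactor identity $\partial_{A_{ij}}\det A=C_{ij}$ and reads off $E'=0$ from that gradient form; the two computations are equivalent, the paper's making the potential-energy interpretation explicit while yours is marginally more direct.
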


\begin{proof}
Let $A\in\mm^3$, and set
 $C=\cof A$.  Fix  indices $(i,j)$.  Using the cofactor expansion across the $i^{\mbox{\tiny th}}$ row of $A$, we have
\begin{equation}
\label{cofexpan}
\det A=\sum_{\ell=1}^3A_{i\ell}C_{i\ell}.
\end{equation}
By definition, the cofactor $C_{i\ell}$ is independent of the $(i,j)^{\mbox{\tiny th}}$ entry $A_{ij}$, for $\ell=1,2,3$.  
Thus, regarding  $\det A$ as a function
from $\mm^3$ into $\rr$, we  have from \eqref{cofexpan}
that
\begin{equation}
\label{detderiv}
\frac{\partial}{\partial A_{ij}}\det A= C_{ij}.
\end{equation}
For $A\in\glt$,  the standard formula
\begin{equation}
\label{invform}
A^{-1}=(\det A)^{-1}(\cof A)^\top
\end{equation}
allows us to express the nonlinearity as 
\begin{equation}
\label{nonlinform}
N(A)=(\det A)^{1-\gamma}A^{-\top}
=(\det A)^{-\gamma}\cof A,
\end{equation}
from which it is clear that $N(A)$ is a $C^\infty$ function of $A$ on $\glt$.  

Writing the system \eqref{ODE} in first order form in the variables $(A_1,A_2)=(A,A')\in\glt\times\mm^3$, we have
\begin{equation}
A_1'(t)=A_2(t),\quad
A_2'(t)=N(A_1(t)).
\end{equation}
The vector field $F(A_1,A_2)=(A_2,N(A_1))$ maps the open set $\glt\times\mm^3\subset\mm^3\times\mm^3$ into $\mm^3\times\mm^3$,
and the preceding paragraph shows that this vector field is $C^\infty$ in $(A_1,A_2)$.
Therefore, the Picard existence and uniqueness theorem for ODEs
implies that the initial value problem for \eqref{ODE} has a unique local solution 
\begin{equation}
(A_1,A_2)\in C((-T,T),\glt\times\mm^3)\cap C^1((-T,T),\mm^3\times\mm^3), 
\end{equation}
for some $T>0$.  

Using \eqref{detderiv} and \eqref{nonlinform},
 we obtain
\begin{equation}
\label{nonlinpot}
N(A)=(\det A)^{-\gamma}\frac{\partial}{\partial A}\det A
=(1-\gamma)^{-1}\frac{\partial}{\partial A}(\det A)^{1-\gamma}.
\end{equation}
Combining \eqref{nonlinpot}   and \eqref{ODE}, we can now verify that the solution satisfies the conservation law \eqref{compenergy}:
\begin{align}
E'(t)&=\frac{d}{dt}\left[\frac12\sum_{i,j=1}^3A'_{ij}(t)^2+(\gamma-1)^{-1}(\det A(t))^{1-\gamma}\right]\\
&=\sum_{i,j=1}^3\left[A'_{ij}(t)A''_{ij}(t)+(\gamma-1)^{-1}\left.\frac{\partial}{\partial A_{ij}}(\det A)^{1-\gamma}\right|_{A=A(t)}A'_{ij}(t)\right]\\
&=\sum_{i,j=1}^3A'_{ij}(t)[A''_{ij}(t)-N(A(t))_{ij}]\\
&=0.
\end{align}
Since the energy satisfies $E(t)=E(0)>0$, for $t\in (-T,T)$, we see that $\tr A'(t) A'(t)^\top$ is uniformly bounded
above and that $\det A(t)$ is uniformly bounded below.  The boundedness of $\tr A'(t) A'(t)^\top$ implies that $\tr A(t) A(t)^\top 
\lesssim 1+t^2$.  This shows that $(A(t),A'(t))$ remains in a compact subset of the domain of the vector field $F$ over every bounded time interval.
It follows that $A$ can be extended to a unique global solution in the desired space.  
Finally, the smoothness of the nonlinearity implies that $A\in C^\infty(\rr,\mm^3)$.
  \end{proof}

\begin{remark}
The system \eqref{ODE} is time reversible.  If $A(t)$ is a solution with initial data $(A(0),A'(0))$, then $\tilde A(t)=A(-t)$ is a 
solution with initial data $(A(0),-A'(0))$.  This means that any statement which holds for all solutions as $t\to\infty$ will also hold
for all solutions as $t\to-\infty$.
\end{remark}

\begin{theorem}
\label{caffinesol}
Fix $\gamma>1$.  Given initial data 
\begin{equation}
(A(0),A'(0))\in\glt\times\mm^3,
\end{equation}
 let $A\in C(\rr,\glt)\cap C^\infty(\rr,\mm^3)$
be the global solution of \eqref{ODE}.  Define $\omt=A(t)B$ and $\cc=\{(t,x): t\in\rr,\;x\in\omt\}$.  Let $ \rho_0, \eps_0\in\mathcal Y$ 
with $ \eps_0$ defined by \eqref{barepsdef}.  Then the triple
\begin{align}
\label{udef}
&u(t,x)=A'(t)A(t)^{-1}x\\
\label{rhodef}
&\rho(t,x)= \rho_0(|A(t)^{-1}x|)/(\det A(t))\\
\label{epsdef}
&\eps(t,x))= \eps_0(|A(t)^{-1}x|)/(\det A(t))^{\gamma-1}
\end{align}
lies in  $C^0(\overline\cc)\cap C^1(\cc)$,  solves the compressible Euler equations
\eqref{cpde1}, \eqref{cpde2}, \eqref{cpde3}, \eqref{stateeqn} 
in $\cc$, and satisfies  the  boundary conditions \eqref{pbc}, \eqref{vbc1}, \eqref{vbc2}.
\end{theorem}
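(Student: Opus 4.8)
The plan is to carry out the entire verification in material (Lagrangian) coordinates $y\in B$, exploiting the fact that by \eqref{udef}--\eqref{epsdef} every hydrodynamic quantity depends on $(t,y)$ only through the radial variable $s=|y|=|A(t)^{-1}x|$ and the Jacobian $J(t)=\det A(t)$, and leaning on the kinematic identities \eqref{mattimeder}, \eqref{divform}, \eqref{dettimederiv} already recorded above. For the regularity statement I would first note that $A\in C(\rr,\glt)\cap C^\infty(\rr,\mm^3)$ makes $t\mapsto A(t)^{-1}$ and $t\mapsto\det A(t)$ smooth with $\det A(t)>0$ on $\rr$; the map $x\mapsto|A(t)^{-1}x|$ is smooth away from $x=0$, and the requirement $f'(0)=0$ in the definition of $\mathcal Y$ is exactly what lets $x\mapsto f(|A(t)^{-1}x|)$ extend to a $C^1$ function through the origin, while $f\in C^0[0,1]$ gives continuity up to $\partial\omt$. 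This yields $u$ smooth and $\rho,\eps\in C^0(\overline{\cc})\cap C^1(\cc)$; since $\rho_0(1)=\eps_0(1)=0$, both vanish on the lateral boundary.

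Next I would dispatch the continuity and energy equations. In material coordinates one has $\rho(t,x(t,y))=\rho_0(|y|)/J(t)$ and $\eps(t,x(t,y))=\eps_0(|y|)/J(t)^{\gamma-1}$; differentiating in $t$ and using $J'(t)=(\tr L(t))J(t)$ from \eqref{dettimederiv} together with $\tr L(t)=\nabla\cdot u$ from \eqref{divform} gives at once $D_t\rho=-\rho\,\nabla\cdot u$ and $D_t\eps=-(\gamma-1)\eps\,\nabla\cdot u$, i.e.\ \eqref{cpde2} and \eqref{cpde3}. The state equation \eqref{stateeqn} is simply taken as the definition of $p$.

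The heart of the matter is the momentum equation \eqref{cpde1}, and this is where I expect the only genuine work. Writing $P(s)=\int_s^1\varsigma\rho_0(\varsigma)\,d\varsigma$, the definition \eqref{barepsdef} says precisely that $(\gamma-1)\rho_0(s)\eps_0(s)=P(s)$, so $p(t,x)=P(s)\,J(t)^{-\gamma}$ with $s=|A(t)^{-1}x|$. Since $P'(s)=-s\rho_0(s)$ and $\nabla_x s=s^{-1}A(t)^{-\top}A(t)^{-1}x$, one finds $\nabla_x p=-\rho_0(s)\,J(t)^{-\gamma}A(t)^{-\top}A(t)^{-1}x$. On the other side, $D_tu=A''(t)A(t)^{-1}x$ by \eqref{mattimeder}, and the ODE \eqref{ODE} turns this into $\rho\,D_tu=\rho_0(s)J(t)^{-1}\cdot J(t)^{1-\gamma}A(t)^{-\top}A(t)^{-1}x=\rho_0(s)\,J(t)^{-\gamma}A(t)^{-\top}A(t)^{-1}x$, so the two terms cancel and \eqref{cpde1} holds. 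The substantive insight is this cancellation: $(\gamma-1)\rho_0\eps_0$ telescopes into $\int_s^1\varsigma\rho_0\,d\varsigma$ exactly so that $\nabla p$ is forced to point along $A^{-\top}A^{-1}x$, which is precisely the direction imposed on $A''A^{-1}x=D_tu$ by \eqref{ODE}; the rest is bookkeeping, and no global-in-time difficulty intervenes because the global solution $A$ is already supplied by the preceding lemma.

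Finally I would check the boundary conditions on the lateral boundary of $\cc$, which is the image of $(t,y)\mapsto(t,A(t)y)$ with $y\in\partial B$. Its tangent space contains $\partial_t(t,A(t)y)=(1,A'(t)y)=(1,u(t,x(t,y)))$, giving \eqref{pbc}. There $s=1$, so $p=P(1)\,J^{-\gamma}=0$, which is \eqref{vbc1}. For \eqref{vbc2}, the outward spatial unit normal at $x\in\partial\omt$ is $\eta_x=|A^{-\top}A^{-1}x|^{-1}A^{-\top}A^{-1}x$, as in Lemma \ref{ssdata}, and evaluating $\nabla_x\eps$ (computed exactly as above) at $s=1$ gives $D_n\eps=\eta_x\cdot\nabla_x\eps=\eps_0'(1)\,J^{1-\gamma}|A^{-\top}A^{-1}x|<0$, using $\eps_0'(1)<0$ from Lemma \ref{raddata}---which is where the growth condition \eqref{brbb} on $\rho_0$ enters---together with $x\neq0$ on $\partial\omt$. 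This establishes \eqref{pbc}, \eqref{vbc1}, \eqref{vbc2} and completes the proof.
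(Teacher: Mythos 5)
Your proof is correct and follows essentially the same route as the paper's: Lagrangian computation for the transport equations, the identity $(\gamma-1)\rho_0\eps_0=\int_s^1\varsigma\rho_0\,d\varsigma$ (your $P$, the paper's $p_0$) to force $\nabla p$ along $A^{-\top}A^{-1}x$, matched against $D_tu=A''A^{-1}x$ and the ODE \eqref{ODE}. The only cosmetic differences are that you unpack the regularity and normal-derivative computations inline rather than deferring to Lemma \ref{ssdata}, and your $D_n\eps$ carries the (positive) factors $J^{1-\gamma}\lvert A^{-\top}A^{-1}x\rvert$ explicitly, which does not affect the sign.
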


\begin{proof}
Since $A(t)\in\glt$ for $t\in\rr$,
Lemma \ref{ssdata} shows that $\rho$, $\eps$ are nonnegative functions on $\cc$ lying in  $C^0(\overline\cc)\cap C^1(\cc)$.
The  boundary condition \eqref{vbc1}  holds by the definition \eqref{stateeqn}, and 
$\eps$ satisfies \eqref{vbc2} by \eqref{epszbb}.

The form  \eqref{udef} for the velocity $u$ follows from \eqref{velgraddef}.
The velocity is $C^\infty$, and the boundary condition \eqref{pbc} holds since the domains $\omt$ are
obtained as the image of $B$ under the motion $x(t,y)=A(t)y$ determined by $u(t,x)$, see \eqref{m2v}.

It remains to verify the PDEs \eqref{cpde1}, \eqref{cpde2}, \eqref{cpde3}.  For this it is convenient to use material coordinates $(t,y)$
and to set $J(t)=\det A(t)$.
By  \eqref{rhodef}, \eqref{divform}, \eqref{dettimederiv}, we have
\begin{align}
D_t\rho(t,x)&=\left.\frac{d}{dt}\rho(t,A(t)y)\right|_{y=A(t)^{-1}x}\\
&=\left.\frac{d}{dt}J(t)^{-1} \rho_0(|y|)\right|_{y=A(t)^{-1}x}\\
&=-\tr L(t)J(t)^{-1} \rho_0(|A(t)^{-1}x|)\\
&=-\rho\nabla\cdot u(t,x).
\end{align}
This verifies \eqref{cpde2}.  An identical calculation yields \eqref{cpde3}.

We now turn to \eqref{cpde1}.  Note that we shall regard $u$ and $\nabla $ as column vectors in this calculation.
In \eqref{mattimeder}, we derived
\begin{equation}
\label{mattimeder2}
D_tu(t,x)=A''(t)A(t)^{-1}x.
\end{equation}
By \eqref{stateeqn}, \eqref{rhodef}, \eqref{epsdef}, we have
\begin{equation}
\label{pressform}
p(t,x)=J(t)^{-\gamma}\;  p_0(s(x)),\quad   p_0=(\gamma-1)  \rho_0 \eps_0,\quad s(x)=|A(t)^{-1}x|.
\end{equation}
From the definition \eqref{barepsdef}, it follows that
\begin{equation}
\label{barpressderiv}
  p_0'(s)=-s \rho_0(s).
\end{equation}
Combining \eqref{pressform}, \eqref{barpressderiv}, and \eqref{rhodef}, we compute the pressure gradient
\begin{align}
\label{pressgrad}
\nabla p(t,x)&=J(t)^{-\gamma}\;\nabla  [ p_0(s(x))]\\
&=J(t)^{-\gamma}\;  p_0'(s(x))\;\nabla s(x)\\
&=J(t)^{-\gamma}\;[-s(x)\; \rho_0(s(x))]\;\nabla s(x)\\
&=-J(t)^{-\gamma}\; \rho_0(s(x))\;(1/2)\nabla (s(x)^2)\\
&= -J(t)^{1-\gamma}\;\rho(t,x)\;A(t)^{-\top}A(t)^{-1}x.
\end{align}

Since $A(t)$ satisfies \eqref{ODE},  the formulas \eqref{mattimeder2} and \eqref{pressgrad} imply that
\begin{multline}
\rho(t,x)D_tu(t,x)+\nabla p(t,x)\\
= \rho(t,x)(A''(t)-J(t)^{1-\gamma}A(t)^{-\top})A(t)^{-1}x=0,
\end{multline}
and so
\eqref{cpde1} holds.
  \end{proof}

\begin{remark}
We note that \eqref{barpressderiv} is the key condition  behind this verification.
\end{remark}

\begin{remark}
We point out the role implicitly played by the vacuum boundary condition \eqref{vbc2}.
Consider \eqref{ODE} with the ``wrong sign" on the right-hand side.  
Then the energy density $E$ in \eqref{compenergy} would no longer be positive definite, and we
can lose the existence of global solutions.  Indeed, blow-up can occur in the spherically symmetric  case, $A(t)=\alpha(t)I$,
for a scalar $\alpha(t)$.
The preceding verification
  leads to a local solution of the PDEs  with negative internal energy and  pressure, violating the condition \eqref{vbc2}.
 
\end{remark}

\begin{remark}
In the  isentropic case, $p=\rho^{\gamma}$, i.e.\ $\eps =(\gamma-1)^{-1}\rho^{\gamma-1}$, the relation \eqref{barpressderiv} 
leads to an ODE for $\rho_0$ whose solution is 
\begin{equation}
\rho_0(s)=\left[\frac{\gamma-1}{2\gamma}(1-s^2)\right]^{1/(\gamma-1)}.
\end{equation}
Thus,  the  parameter in \eqref{brbb} is $\delta=1/(\gamma-1)$.  We also have 
\begin{equation}
\eps_0(s)=\frac1{2\gamma}(1-s^2).
\end{equation}
\end{remark}

\subsection{Incompressible case}
The  Euler equations for an incompressible {perfect} fluid take the form
\begin{align}
\label{ipde1}
&D_tu+\nabla p=0,\\
\label{ipde2}
&\nabla\cdot u=0.
\end{align}
These are again to be solved in a space-time cylinder $\cc_T$, as in the compressible case,
with the boundary conditions
\begin{align}
\label{ipbc}
&\eta(t,x)\cdot(1,u(t,x))=0, && (t,x)\in\bb_T,\\
\label{ivbc1}
&p(t,x)=0,&&(t,x)\in\bb_T,\\
\label{ivbc2}
&D_np(t,x)<0,&&(t,x)\in\bb_T.
\end{align}

The next lemma describes the evolution of the deformation gradient of affine solutions in the incompressible case.
 
\begin{lemma}
Given initial data $(A(0),A'(0))\in\slt\times\mm^3$ with  
\begin{equation}
\tr A'(0)A(0)^{-1}=0,
\end{equation}
the system
\begin{equation}
\label{IODE}
A''(t)=\Lambda(A(t))\;A(t)^{-\top},\quad  \Lambda(A(t))\equiv\frac{\tr(A'(t)A(t)^{-1})^2}{\tr (A(t)^{-\top}A(t)^{-1})},
\end{equation}
has a unique global solution $A\in C(\rr,\slt)\cap C^\infty(\rr,\mm^3)$.
The solution satisfies the conservation law
\begin{equation}
\label{ienergyid}
E_K(t)\equiv\frac12\tr A'(t)A'(t)^\top=E_K(0).
\end{equation}
If $E_K(0)>0$, the solution is a geodesic curve in $\slt$.  As a curve in $\mm^3$, its { curvature} is
\begin{equation}
\label{curvaturedef}
\kappa(t)=\frac{\tr (A'(t)A(t)^{-1})^2}{2E_K(0)\;(\tr A(t)^{-\top}A(t)^{-1})^{1/2}}.
\end{equation}
\end{lemma}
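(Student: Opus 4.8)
The plan is to mirror the proof of the compressible lemma: get a local solution by Picard's theorem on the open set $\glt\times\mm^3$, then use the algebraic structure of $\Lambda$ to show that the constraints $\det A=1$ and $\tr A'A^{-1}=0$ propagate, that the kinetic energy is conserved, and hence that no finite-time breakdown occurs. In first-order form, \eqref{IODE} reads $A_1'=A_2$, $A_2'=\Lambda(A_1,A_2)A_1^{-\top}$ with $\Lambda(A_1,A_2)=\tr\big((A_2A_1^{-1})^2\big)/\tr(A_1^{-\top}A_1^{-1})$. On $\glt\times\mm^3$ the map $A_1\mapsto A_1^{-1}$ is $C^\infty$ by \eqref{invform} (since $\det A_1>0$), and the denominator equals $|A_1^{-1}|^2>0$, so the vector field is $C^\infty$ and Picard yields a unique local solution in $C(\cdot,\glt)\cap C^\infty(\cdot,\mm^3)$.

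The crux is constraint propagation. Put $L(t)=A'(t)A(t)^{-1}$ and $g(t)=\tr L(t)$. Differentiating and using $\tfrac{d}{dt}A^{-1}=-A^{-1}A'A^{-1}$ gives $g'=\tr(A''A^{-1})-\tr(L^2)$; but \eqref{IODE} yields $\tr(A''A^{-1})=\Lambda(A)\,\tr(A^{-\top}A^{-1})=\tr(L^2)$ \emph{by the very definition of} $\Lambda$, so $g'\equiv0$, whence $g\equiv g(0)=\tr A'(0)A(0)^{-1}=0$. Since $J(t)=\det A(t)$ satisfies $J'=gJ$ by \eqref{dettimederiv}, we get $J\equiv J(0)=1$ and the solution stays in $\slt$. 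The same identity gives $E_K'=\tr\big(A'(A'')^\top\big)=\Lambda(A)\,\tr(A'A^{-1})=\Lambda(A)\,g=0$, which is \eqref{ienergyid}. Global existence then follows as in the compressible case: $E_K$ constant bounds $|A'(t)|$, hence $|A(t)|\lesssim1+|t|$, and $\det A\equiv1$ gives $|A(t)^{-1}|=|\cof A(t)|\lesssim|A(t)|^2\lesssim(1+|t|)^2$, so the orbit stays in a compact subset of $\glt\times\mm^3$ on bounded time intervals and extends globally, with smoothness inherited from the vector field.

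For the geometry, regard $\slt$ as a Riemannian submanifold of $\mm^3\cong\rr^9$ with $\langle X,Y\rangle=\tr XY^\top$. Its tangent space at $A$ is $\{B:\tr(A^{-1}B)=0\}$ (Jacobi's formula for $D\det$), and its one-dimensional normal space is spanned by the Euclidean gradient of $\det$ at $A$, namely $\cof A=(\det A)A^{-\top}=A^{-\top}$ on $\slt$. Since $\tr(A(t)^{-1}A'(t))=g(t)=0$, the velocity $A'(t)$ is tangent to $\slt$, while \eqref{IODE} says $A''(t)$ is a scalar multiple of $A(t)^{-\top}$, i.e.\ normal to $\slt$. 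A curve on a submanifold of Euclidean space whose ambient acceleration is everywhere normal is a geodesic — its covariant acceleration is the tangential part of $A''$ — and automatically has constant speed (consistent with \eqref{ienergyid}); this proves the geodesic assertion when $E_K(0)>0$. Conversely, \eqref{IODE} is exactly what one obtains by positing $A''=\mu A^{-\top}$ and choosing $\mu$ so that the tangency relation $\tr A'A^{-1}=0$ is preserved.

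Finally, for a curve $A(\cdot)$ in Euclidean $\mm^3$ the curvature is $\kappa=|A'|^{-3}\big(|A'|^2|A''|^2-\langle A',A''\rangle^2\big)^{1/2}$; constant speed forces $\langle A',A''\rangle=\tfrac12\tfrac{d}{dt}|A'|^2=0$, so $\kappa=|A''|/|A'|^2=|A''|/(2E_K(0))$. Since $|A''|=|\Lambda(A)|\,|A^{-\top}|=|\Lambda(A)|\,(\tr A^{-\top}A^{-1})^{1/2}$ and $\Lambda(A)=\tr\big((A'A^{-1})^2\big)/\tr(A^{-\top}A^{-1})$, substitution gives \eqref{curvaturedef} (reading the numerator with absolute value, since $\tr(L^2)$ need not be nonnegative). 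I expect the only real obstacle to be bookkeeping: pinning down the identity $\tr(A''A^{-1})=\tr(L^2)$ from the definition of $\Lambda$ — which simultaneously forces $\det A\equiv1$ and energy conservation — together with correctly identifying the normal bundle of $\slt$ so that the geodesic claim does not become circular.
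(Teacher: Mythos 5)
Your proof is correct and follows essentially the same route as the paper: local existence via Picard on $\glt\times\mm^3$, propagation of $\tr L\equiv0$ and hence $\det A\equiv1$ directly from the definition of $\Lambda$, energy conservation from $\tr(A''A'^\top)=\Lambda(A)\,\tr L=0$, global extension from the bounds $|A'|$ constant and $|A^{-1}|=|\cof A|\lesssim|A|^2$, and the geodesic/curvature claims by identifying $A^{-\top}$ with the normal to $\slt$. Your remark about the absolute value is apt but not a gap: the paper's $\kappa$ in \eqref{curvaturedef} is intentionally a signed curvature (the component of acceleration along the chosen normal $A^{-\top}$), as its later use to detect negative pressure makes clear, whereas your formula gives the unsigned Euclidean curvature.
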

\begin{proof}
By \eqref{invform}, it follows that the right-hand side of \eqref{IODE} is a $C^\infty$ function of $A$
on $\glt$.  For the moment, take initial data in $\glt\times\mm^3$.  Arguing as in Theorem \ref{caffinesol}, we can construct a unique local solution of \eqref{IODE}  
$A\in C((-T,T),\glt)\cap C^2((-T,T),\mm^3)$, for some $T>0$.

We now show that if the initial data satisfies  $A(0)\in\slt$ and 
\[
\tr A'(0)A(0)^{-1}=0,
\]
 then $A\in C((-T,T),\slt)$.
Define 
\begin{equation}
L(t)=A'(t)A(t)^{-1},
\end{equation}
 for $t\in(-T,T)$.  
By \eqref{defgradode}, we have
\begin{equation}
A''(t)=L'(t)A(t)+L(t)A'(t),
\end{equation}
and so, since $A''(t)=\Lambda(A(t))A(t)^{-\top}$ by \eqref{IODE}, we get
\begin{align}
\label{ellode}
L'(t)&=[A''(t)-L(t)A'(t)]A(t)^{-1}\\
%&=A''(t)A(t)^{-1}-L(t)^2\\
&=\Lambda(A(t))\;A(t)^{-\top}A(t)^{-1}-L(t)^2.
\end{align}
 This implies that 
 \begin{equation}
 \label{trlambpr}
\tr L'(t)=\Lambda(A(t)) \tr A(t)^{-\top}A(t)^{-1}-\tr L(t)^2=0,
\end{equation}
 by definition of $\Lambda(A(t))$.  By assumption on the initial data, we have $\tr L(0)=\tr A'(0)A(0)^{-1}=0$,
 and thus, 
 \begin{equation}
 \label{divvelvan}
\tr L(t)=0,\quad  t\in(-T,T).
\end{equation}
   By \eqref{dettimederiv}, \eqref{divvelvan}, we  see that $J(t)=\det A(t)$ satisfies $J'(t)=0$,
 and so $J(t)=J(0)=1$, since $A(0)\in\slt$.  We have proven that the solution satisfies
 $A\in C((-T,T),\slt)$.

Next, we verify the conservation law.  
By \eqref{detderiv} and \eqref{invform}, we have
\begin{equation}
\label{detderiv2}
A^{-\top}= (\det A)^{-1}\frac{\partial}{\partial A}\det A ,\quad A\in\glt.
\end{equation}
Thus, using  \eqref{IODE}  and \eqref{detderiv2}, we have
 \begin{align}
E'_K(t)&=\frac{d}{dt}\frac12\tr A'(t)A'(t)^\top\\
&=\sum_{i,j=1}^3A'_{ij}(t)A''_{ij}(t)\\
&=\Lambda(A(t))\sum_{i,j=1}^3(A(t)^{-\top})_{ij}A'(t)_{ij}\\
&=\Lambda(A(t))\sum_{i,j=1}^3(\det A)^{-1}\left.\frac{\partial}{\partial A_{ij}}\det A\right|_{A=A (t)}A'(t)_{ij}\\
&=\Lambda(A(t))\;(\det A(t))^{-1}\frac{d}{dt}\det A(t)\\
&=0,
\end{align}
since $\det A(t)=1$.  This proves \eqref{ienergyid}.

Now that \eqref{ienergyid} holds, we have that $\tr A(t)A(t)^\top\lesssim 1+t^2$.  Let $\lambda>0$ be an eigenvalue of the
positive definite symmetric matrix $A(t)A(t)^\top$.  The eigenvalues of $A(t)^{-\top}A(t)^{-1}=(A(t)A(t)^\top)^{-1}$ are the inverses
of the eigenvalues of $A(t)A(t)^\top$.  Thus, we have
\begin{equation}
\tr A(t)^{-\top}A(t)^{-1} \ge 1/\lambda \ge ( \tr A(t)A(t)^\top)^{-1}\gtrsim (1+t^2)^{-1}.
\end{equation}
It follows that $(A(t),A'(t))$ remains in a compact subset of the domain of the nonlinearity on every bounded time interval.
Therefore, the solution is global.  It lies in $C^\infty(\rr,\mm^3)$ thanks to the smoothness of the nonlinearity.

Since $\slt=\{A\in\mm^3:\det A=1\}$ is the level set of a smooth function, we see that $\slt$ is an embedded submanifold
of $\mm^3\approx\rr^9$.  Equation \eqref{detderiv2} says that $A^{-\top}$ is normal to $\slt$ at a point $A\in\slt$.
  Therefore, 
\begin{equation}
n(A)=(\tr A^{-\top}A^{-1})^{-1/2}A^{-\top}
\end{equation}
is a unit normal along $\slt$.  Equation \eqref{IODE} implies that the tangential component of
the acceleration vector $A''(t)$  vanishes.  In other words,  $A(t)$ is a geodesic.

We finish the proof with the verification of \eqref{curvaturedef}.
By \eqref{ienergyid}, the tangent vector $A'(t)$ has constant length 
\begin{equation}
(\tr A'(t)A'(t)^\top)^{1/2}=(2E_K(0))^{1/2}\equiv\tau>0.
\end{equation}
The reparameterized curve $\tilde A(s)=A(s/\tau)$ in $\mm^3$ has a unit length tangent, so its
curvature is given by the length of its acceleration vector $\tilde A''(s)$, that is
\begin{equation}
\kappa(s)=(\tr \tilde A''(s)\tilde A''(s)^\top)^{1/2}.
\end{equation}
The claim \eqref{curvaturedef} follows from this and \eqref{IODE}.
  \end{proof}

\begin{theorem}
\label{iaffinesol}
Given initial data $(A(0),A'(0))\in\slt\times\mm^3$ with 
\begin{equation}
\tr A'(0)A(0)^{-1}=0,
\end{equation}
let $A\in C(\rr,\slt)\cap C^\infty(\rr,\mm^3)$ be the global solution of the initial value problem for \eqref{IODE}.
Define $\omt=A(t)B$ and $\cc=\{(t,x):x\in\omt,\;t\in\rr\}$.  Then the pair
\begin{align}
&u(t,x)=A'(t)A(t)^{-1}x\\
&p(t,x)=\frac12\; \frac{\tr (A'(t)A(t)^{-1})^2}{\tr A(t)^{-\top}A(t)^{-1}}\;[1-|A(t)^{-1}x|^2]
\end{align}
solves the incompressible Euler equations \eqref{ipde1}, \eqref{ipde2} in $\cc$ and the boundary conditions
\eqref{ipbc}, \eqref{ivbc1}.  If the curvature defined in \eqref{curvaturedef}
is positive, then the boundary condition \eqref{ivbc2} also holds.  
\end{theorem}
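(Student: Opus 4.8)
The plan is to mirror, almost line for line, the verification carried out for the compressible case in Theorem \ref{caffinesol}, exploiting the spatial homogeneity of the velocity gradient and the explicit quadratic form of the pressure; all the analytic work has already been absorbed into the preceding lemma.

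First I would record that $u(t,x)=L(t)x$ with $L(t)=A'(t)A(t)^{-1}$, so $u$ is smooth on $\cc$ and, by \eqref{divvelvan}, $\nabla\cdot u=\tr L(t)=0$, which is \eqref{ipde2}. The pressure is a polynomial of degree two in $x$ with coefficients depending smoothly on $t$ through $A(t)$, $A'(t)$ (recall $\det A(t)=1$), hence $p\in C^\infty(\cc)\cap C^0(\overline\cc)$. Writing $p(t,x)=\tfrac12\,\Lambda(A(t))\,[1-|A(t)^{-1}x|^2]$ with $\Lambda$ as in \eqref{IODE}, and using $|A(t)^{-1}x|^2=x^\top A(t)^{-\top}A(t)^{-1}x$, I compute $\nabla p(t,x)=-\Lambda(A(t))\,A(t)^{-\top}A(t)^{-1}x$. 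On the other hand \eqref{mattimeder} gives $D_tu(t,x)=A''(t)A(t)^{-1}x$. Adding the two and invoking the equation of motion \eqref{IODE}, $A''(t)=\Lambda(A(t))A(t)^{-\top}$, yields $D_tu+\nabla p=(A''(t)-\Lambda(A(t))A(t)^{-\top})A(t)^{-1}x=0$, which is \eqref{ipde1}.

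For the boundary conditions, the domains $\omt=A(t)B$ are by construction the images of $B$ under the flow of $u$, so the space-time field $(1,u)$ is tangent to $\bb_T$; this gives \eqref{ipbc}, exactly as in Theorem \ref{caffinesol}. Since $|A(t)^{-1}x|=1$ for $x\in\partial\omt$, the bracket in $p$ vanishes there, so \eqref{ivbc1} holds. Finally, the unit outward normal at $x\in\partial\omt$ is $n(x)=|A(t)^{-\top}A(t)^{-1}x|^{-1}A(t)^{-\top}A(t)^{-1}x$, as in Lemma \ref{ssdata}, so from the formula for $\nabla p$ above, $D_np(t,x)=\nabla p(t,x)\cdot n(x)=-\Lambda(A(t))\,|A(t)^{-\top}A(t)^{-1}x|$. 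Thus \eqref{ivbc2} holds precisely when $\Lambda(A(t))>0$; comparing with \eqref{curvaturedef} and using $E_K(0)>0$ (without which the curvature is not defined), $\Lambda(A(t))$ and $\kappa(t)$ share a sign, so this is equivalent to positivity of the curvature.

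There is no genuine obstacle here beyond bookkeeping — the substance was front-loaded into the preceding lemma (that $\tr L(t)\equiv 0$, that $A(t)\in\slt$ for all $t$, and the curvature formula). The one point that deserves care is the sign analysis of $D_np$ on the boundary: since $\tr L(t)^2$ is not a sign-definite function of $L(t)$, the quantity $\Lambda(A(t))$ need not keep a fixed sign along the flow, so \eqref{ivbc2} is genuinely a conditional assertion, to be examined case by case as promised in Section \ref{swirlsec}.
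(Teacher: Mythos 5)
Your argument is correct and is essentially line for line the paper's own proof: compute $\nabla p = -\Lambda(A(t))A(t)^{-\top}A(t)^{-1}x$, combine with $D_tu=A''(t)A(t)^{-1}x$ and the evolution law \eqref{IODE}, use $\tr L(t)=0$ for incompressibility, and read off the boundary conditions from the explicit formulas. Incidentally your computation $D_np(t,x)=-\Lambda(A(t))\,|A(t)^{-\top}A(t)^{-1}x|$ is the right one; the exponent $1/2$ appearing in the paper at that step is a typo, though it does not affect the sign argument that concludes the proof.
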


\begin{proof}
This is a straightforward calculation.  As in the proof of Theorem \ref{caffinesol}, we have that
$u\in C^\infty$, the boundary condition \eqref{ipbc} holds, and $D_tu(t,x)=A''(t)A(t)^{-1}x$.

Since $\omt=A(t)B$, the boundary condition \eqref{ivbc1} is satisfied.  By definition,  the pressure is $C^\infty$, and its gradient is
\begin{equation}
\label{gradpressform}
\nabla p(t)=-\Lambda(A(t))\;A(t)^{-\top}A(t)^{-1}x,\; \mbox{with} \; \Lambda(A(t))= \frac{\tr (A'(t)A(t)^{-1})^2}{\tr A(t)^{-\top}A(t)^{-1}}.
\end{equation}
Therefore, since $A(t)$ is a solution of  \eqref{IODE}, we have that
\begin{equation}
D_tu(t,x)+\nabla p(t,x)=(A''(t)-\Lambda(A(t))A(t)^{-\top})A(t)^{-1}x=0,
\end{equation}
so that the PDE \eqref{ipde1} is satisfied.

From \eqref{velgraddef}, \eqref{divvelvan}, we obtain
\begin{equation}
\nabla\cdot u(t,x)=\tr A'(t)A(t)^{-1}=0,
\end{equation}
which verifies \eqref{ipde2}.

Since the unit normal along $\partial\omt$ is 
\begin{equation}
n(t,x)=A(t)^{-\top}A(t)^{-1}x/|A(t)^{-\top}A(t)^{-1}x|,
\end{equation}
the expression \eqref{gradpressform} yields
\begin{equation}
D_np(t,x)=-\Lambda(A(t)) |A(t)^{-\top}A(t)^{-1}x|^{1/2}, \quad x\in\partial\omt.
\end{equation}
Since $\Lambda(A(t))$ and $\kappa(t)$ share the same sign, we see that \eqref{ivbc2} holds when the curvature is positive.
  \end{proof}

\begin{remark}
The physical vacuum boundary condition \eqref{ivbc2} is not required for the global solvability of the  system of ODEs \eqref{IODE}  nor 
for the PDEs \eqref{ipde1}, \eqref{ipde2}.  
\end{remark}

\begin{remark}
Write $L(t)=D(t)+W(t)$ with $D(t)=\frac12(L(t)+L(t)^\top)$ and $W(t)=\frac12(L(t)-L(t)^\top)$.  The symmetric part $D(t)$ is called the strain rate tensor,
and the antisymmetric part $W(t)$ defines the vorticity vector $\omega(t,x)=\omega(t)$ through the operation
$W(t)=\frac12\omega(t)\times$.  Notice that
\begin{equation}
\tr L(t)^2= \tr D(t)^2+ \tr W(t)^2=\tr D(t)D(t)^\top-\tr W(t)W(t)^\top.
\end{equation}
Thus, it is apparent from \eqref{curvaturedef} that negative curvature and pressure can arise only if vorticity is present.  We shall see in Theorem \ref{incswirl}
that negative curvature is indeed possible.

Incompressible irrotational flows ($\omega=0$)  exist (at least locally) for the general vacuum free boundary problem, 
and  the pressure
remains positive within the fluid domain, by the maximum principle.  For irrotational affine motion, it is clear from the explicit formulas that 
the pressure and curvature are positive.  This will be further highlighted in
Theorem \ref{incirrot}.
\end{remark}

\begin{remark}
The solutions given in Theorems \ref{caffinesol} and \ref{iaffinesol} can be extended to  global weak solutions on $\rr\times\rr^3$ by setting all quantities to zero 
on the complement of the space-time fluid domain $\cc$.
\end{remark}

\section{Spreading of Fluid Domains}

In this section, we prove that for affine motion the diameters of the fluid domains $\omt$ grow at a rate proportional to time,
provided the vacuum boundary condition holds.
This improves upon the results for general flows previously given by the author  in \cite{sideris-2014} where only lower bounds were 
obtained.  We also obtain growth estimates for the volume of $\omt$ in the compressible case, by showing that the potential energy
decays to zero.

\begin{theorem}
\label{compspread}
If $A\in C(\rr,\glt)\cap C^2(\rr,\mm^2)$ is a solution of \eqref{ODE} and $\omt=A(t)B$, then
\begin{equation}
\label{tracegrowth}
\diam\omt\sim(\tr A(t)A(t)^\top)^{1/2} \sim 1+|t|,\quad t\in\rr,
\end{equation}
and
\begin{equation}
\label{detgrowth}
 1+|t|^p\lesssim \vol\omt\sim\det A(t) \lesssim 1+|t|^3,\quad t\in\rr,
\end{equation}
with
 \begin{equation}
 p=
 \begin{cases}
3, &\mbox{if $\;1< \gamma\le5/3$},\\
2/(\gamma-1),&\mbox{if $\;\gamma> 5/3$}.
\end{cases}
\end{equation}
\end{theorem}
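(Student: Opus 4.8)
The plan is to exploit the conservation law \eqref{compenergy} together with the explicit ODE \eqref{ODE} to pin down the quadratic-in-time growth of $\tr A(t)A(t)^\top$, and then to extract the volume bounds from the decay of the potential energy. First I would introduce the scalar $f(t)=\tfrac12\tr A(t)A(t)^\top$ and compute $f''(t)$. Using \eqref{ODE} and the identity $\tr\big(A(t)^{-\top}A(t)^\top\big)=\tr I=3$, one gets
\begin{equation}
f''(t)=\tr A'(t)A'(t)^\top+\tr A''(t)A(t)^\top=2E_K(t)+3\,(\det A(t))^{1-\gamma},
\end{equation}
where I write $2E_K(t)=\tr A'(t)A'(t)^\top$. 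Since $\gamma>1$ and $\det A(t)>0$, the second term is positive, so $f''(t)\ge 2E_K(t)\ge 0$; moreover, by the conservation law \eqref{compenergy}, $E_K(t)\le E(0)$ and $(\gamma-1)^{-1}(\det A(t))^{1-\gamma}\le E(0)$, hence $f''(t)\le 2E(0)+3(\gamma-1)E(0)$, a constant. Integrating twice gives $f(t)\sim 1+t^2$ once I know $f''$ is bounded below away from $0$ for large $|t|$. The lower bound $f''(t)\gtrsim 1$ is the only delicate point: one must show $E_K(t)$ cannot decay to $0$, equivalently that not all of the energy can be absorbed into potential energy. Here I would argue that if $E_K(t_n)\to 0$ along a sequence, then $(\det A(t_n))^{1-\gamma}\to (\gamma-1)E(0)>0$, so $\det A(t_n)$ stays bounded, while $f(t_n)\to\infty$ is impossible because $\det A\le (f/3)^{3/2}$ forces $\det A\to\infty$ whenever $f\to\infty$ — so either $f$ stays bounded (contradicting nothing yet) or $E_K$ is bounded below. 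To close this cleanly I would instead use the virial-type identity: $\frac{d}{dt}\tr A'(t)A(t)^\top = 2E_K(t)+3(\det A)^{1-\gamma} = f''(t)$, combined with the fact that $\int^\infty (\det A(t))^{1-\gamma}\,dt<\infty$ (from the potential-energy decay argument below), to conclude $f''(t)\to 2\,\overline{E_K}$ with $\overline{E_K}>0$; then $f(t)\sim t^2$ follows, and \eqref{tracegrowth} follows from \eqref{diamsimtr} and the equivalence $\tr A A^\top\sim(\tr(AA^\top)^{1/2})^2$ valid on bounded-determinant-below matrices — actually more simply $\diam\omt\sim(\tr AA^\top)^{1/2}$ directly since the largest eigenvalue of $AA^\top$ is comparable to its trace only up to the number of axes, which is fine for $\sim$.

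For \eqref{detgrowth}, the upper bound $\det A(t)\lesssim 1+|t|^3$ is immediate from $\det A\le (\tfrac13\tr AA^\top)^{3/2}\lesssim (1+t^2)^{3/2}$ via \eqref{tracegrowth}. The lower bound is the substantive part and splits according to $\gamma$. The key is to show the potential energy $V(t)=(\gamma-1)^{-1}(\det A(t))^{1-\gamma}$ decays, with a quantitative rate. I would compute $V'(t)=-(\det A)^{-\gamma}\tr\big(\cof A(t)\,A'(t)^\top\big)\cdot\text{(sign)}$ — more usefully, differentiate $W(t)=\tr A'(t)A(t)^\top$ to get $W'(t)=f''(t)=2E_K(t)+3V(t)\cdot(\gamma-1)$, wait: $3(\det A)^{1-\gamma}=3(\gamma-1)V(t)$. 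Integrating, $W(t)-W(0)=\int_0^t\big(2E_K+3(\gamma-1)V\big)\,ds$. Since $W(t)=\tfrac12(f^2)'/f^{1/2}$... cleaner: $W(t)\le \|A'(t)\|\,\|A(t)\|\lesssim \sqrt{E(0)}\,\sqrt{1+t^2}$, so $W(t)=\oo(t)$, which forces $\int_0^\infty V(s)\,ds<\infty$ and $\int_0^\infty E_K(s)\,ds$... no, $E_K$ doesn't decay. Let me restructure: from $W(t)=\oo(t)$ and $W'(t)=2E_K(t)+3(\gamma-1)V(t)$ with $2E_K(t)=2E(0)-2V(t)(\gamma-1)$... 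I'd get $W'(t)=2E(0)+ (\gamma-1)V(t)$, so $W(t)=W(0)+2E(0)t+(\gamma-1)\int_0^t V(s)ds$, and $W(t)=\oo(t)$ is consistent only if — hmm, $2E(0)t$ already grows linearly, so this just gives $\int_0^t V\,ds = \oo(t)$, i.e. $V$ has vanishing Cesàro mean. That's weaker than I want.

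The honest route to the sharp exponent $p$ is an ODE analysis of the largest and smallest eigenvalues of $A(t)A(t)^\top$, or equivalently a direct scaling analysis. Let $\lambda_1\le\lambda_2\le\lambda_3$ be the squared semi-axes, so $\det A=(\lambda_1\lambda_2\lambda_3)^{1/2}$ and $\tr AA^\top=\lambda_1+\lambda_2+\lambda_3\sim t^2$. The conservation law controls $\sum \dot\lambda_i^2/\lambda_i$-type quantities, and the equation of motion for each $\lambda_i$ includes a repulsive force $\propto (\det A)^{1-\gamma}/\lambda_i$. The point is that the smallest axis $\lambda_1$ cannot collapse, and the rate at which $\det A$ grows is governed by whether the "pressure" $(\det A)^{1-\gamma}$ decays fast enough to be integrable (giving $\det A\sim t^3$, all axes linear, when $1-\gamma$ is very negative, i.e. $\gamma$ small — consistent with $p=3$ for $\gamma\le 5/3$) or only slowly (giving slower growth $\det A\sim t^{2/(\gamma-1)}$ when $\gamma>5/3$, the self-similar balance where $(\det A)^{1-\gamma}\sim \ddot\lambda \sim t^{2/(\gamma-1)-2}$, i.e. $(2/(\gamma-1))(1-\gamma)=2/(\gamma-1)-2$, which indeed holds identically — so $5/3$ is exactly the threshold where $t^{2/(\gamma-1)}=t^3$). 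I expect the main obstacle to be making this self-similar balance rigorous: one direction (the upper bound $\det A\lesssim t^3$) is trivial, but the lower bound $\det A\gtrsim t^{\min(3,\,2/(\gamma-1))}$ requires showing that the potential energy does not decay faster than the rate dictated by the self-similar profile, which I would establish by constructing a sub/super-solution for the scalar quantity $\det A(t)$ — deriving from \eqref{ODE} the differential inequality $\frac{d^2}{dt^2}(\det A)^{2/3}\gtrsim (\det A)^{2/3-\gamma+1-2/3}\cdot(\cdots)$ via the arithmetic–geometric mean inequality applied to the eigenvalues, and then comparing with the exact power-law solution of the associated ODE. This comparison-principle step, and the verification that the physical vacuum condition (encoded in the sign of the right side of \eqref{ODE}) makes the nonlinearity have the correct sign for the comparison to run, is where the real work lies.
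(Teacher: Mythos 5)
Your opening is on the right track: the virial quantity $X(t)=\tfrac12\tr A(t)A(t)^\top$, the identity $X''(t)=2E_K(t)+3(\det A(t))^{1-\gamma}$, and the idea of using \eqref{compenergy} to control $X''$ are exactly the ingredients the paper uses. But you seriously overcomplicate the growth of $X$ and then stall on the determinant lower bound, which is where the theorem's content really sits.

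\textbf{The lower bound $X''\gtrsim 1$ is not delicate.} Write $E_P(t)=(\gamma-1)^{-1}(\det A)^{1-\gamma}$ so that $E_K(t)+E_P(t)=E(0)$ with $0\le E_P(t)\le E(0)$. Then
\begin{equation}
X''(t)=2E_K(t)+3(\gamma-1)E_P(t)=2E(0)+(3\gamma-5)E_P(t),
\end{equation}
and since $E_P(t)\in[0,E(0)]$ this is a convex combination valued in $2E(0)[\min(1,3(\gamma-1)/2),\max(1,3(\gamma-1)/2)]$. Both endpoints are strictly positive because $\gamma>1$, so $X''(t)\ge 2E(0)\underline\sigma>0$ with no need to exclude $E_K\to 0$. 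Your worry about $E_K$ vanishing is moot: when $E_K$ is small, $E_P$ is close to $E(0)$ and the term $3(\gamma-1)E_P$ takes up the slack. (Incidentally, note the sign error in your later calculation: $W'(t)=2E(0)+(3\gamma-5)V(t)$, not $2E(0)+(\gamma-1)V(t)$. The factor $3\gamma-5$ is precisely what makes $\gamma=5/3$ the threshold; with your erroneous coefficient the threshold is invisible, which is part of why your subsequent estimates go nowhere.)

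\textbf{The substantive gap is the lower bound $\det A(t)\gtrsim 1+|t|^p$.} You concede as much: ``the honest route \ldots is where the real work lies,'' and sketch a self-similar comparison-principle argument for the eigenvalues that you do not execute. The paper closes this cleanly with a scalar device you do not have. Cauchy--Schwarz gives
\begin{equation}
U(t)\equiv\frac{X'(t)^2}{4E(0)X(t)}\le\frac{E_K(t)}{E(0)}\equiv\Theta(t)<1,
\end{equation}
so $1-U(t)\ge 1-\Theta(t)=E_P(t)/E(0)$. Differentiating $U$ and using $X''/2E(0)=\Theta+\tfrac{3(\gamma-1)}{2}(1-\Theta)\ge U+\underline\sigma(1-U)$ (by $U\le\Theta$ and one-line casework on whether $\underline\sigma\le1$ or $\underline\sigma=1$) yields the differential inequality
\begin{equation}
U'(t)\ge\underline\sigma\,\frac{X'(t)}{X(t)}\,(1-U(t)),\qquad t\ge T,
\end{equation}
which integrates to $1-U(t)\lesssim X(t)^{-\underline\sigma}\lesssim t^{-2\underline\sigma}$. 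Hence $E_P(t)\lesssim t^{-2\underline\sigma}$, i.e.\ $\det A(t)\gtrsim t^{2\underline\sigma/(\gamma-1)}$, which is exactly $t^3$ for $1<\gamma\le5/3$ and $t^{2/(\gamma-1)}$ for $\gamma>5/3$. Without something equivalent to this $U$-inequality, your proposal has no quantitative handle on the decay of $E_P$ --- vanishing Ces\`aro mean of $V$, as you correctly note, is far too weak --- and the sub/super-solution sketch for $(\det A)^{2/3}$ is not an argument in its current form. This is the missing idea.
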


\begin{proof}

Define the quantities
\begin{align}
&X(t)=\frac12\tr A(t) A(t)^\top ,\\
&E_K(t)=\frac12\tr A'(t)A'(t)^\top ,\\
&E_P(t)=(\gamma-1)^{-1}(\det A(t))^{1-\gamma}.
\end{align}
The identity \eqref{compenergy} can be written as
\begin{equation}
\label{compenergy2}
E(t)=E_K(t)+E_P(t)=E(0).
\end{equation}
Note that $X(t)>0$ and $E_P(t)>0$, for $t\in\rr$.  From \eqref{compenergy2}, we also have $E_K(t)=E(0)-E_P(t)<E(0)$, for $t\in\rr$.
It follows by direct calculation using  \eqref{ODE} that
\begin{equation}
\label{virialid}
X''(t)=2E_K(t)+3(\gamma-1)E_P(t).
\end{equation}
Using \eqref{compenergy2} and \eqref{virialid}, we can write
\begin{gather}
\label{convex}
X''(t)=2E(0)\left[\Theta(t)+3(\gamma-1)/2\;(1-\Theta(t))\right],\\
\intertext{with}
\label{convexparam}
 \Theta(t)=E_K(t)/E(0)\in[0,1).
\end{gather}
Thus, \eqref{convex} and \eqref{convexparam} imply that
\begin{equation}
\label{bracket}
X''(t)\in 2E(0)[\us,\os],
\end{equation}
in which
\begin{equation}
\us=\min(1,{3(\gamma-1)}/{2}),\quad \os=\max (1,{3(\gamma-1)}/{2}).
\end{equation}
It follows by integration of \eqref{bracket} that
\begin{gather}
\label{bracket2}
X'(t)-X'(0)\in 2E(0)\;t\;[\us,\os],\\
\label{bracket3}
X(t)-X'(0)t-X(0)\in E(0)\;t^2\;[\us,\os].
\end{gather}
By  \eqref{bracket2}, there exists a  $T>0$ such that
\begin{equation}
\label{expasympbehav}
 X'(t)\gtrsim t>0,\quad t\ge T.
\end{equation}
Since $X(t)>0$, \eqref{bracket3} implies that 
\begin{equation}
\label{exasympbehav}
X(t)\sim 1+t^2,\quad t\in\rr.
\end{equation}
  
This proves \eqref{tracegrowth}.

Of course, \eqref{tracegrowth} implies that the eigenvalues of the positive definite matrix $A(t)A(t)^\top$
are $\lesssim 1+ t^2$.  Thus, $\det A(t)= (\det A(t)A(t)^\top)^{1/2}\lesssim 1+|t|^3$, which proves the upper bound
in \eqref{detgrowth}.

By the Cauchy-Schwarz inequality, we have
\begin{align}
\label{csbound}
|X'(t)|&=|\tr A(t)A'(t)^\top|\\
&\le  (\tr A(t)A(t)^\top)^{1/2}(\tr A'(t)A'(t)^\top)^{1/2}\\
&=2 X(t)^{1/2}E_K(t)^{1/2},
\end{align}
and so by \eqref{convexparam}
\begin{equation}
\label{uudef}
U(t)\equiv\frac{X'(t)^2}{4E(0)X(t)}\le\Theta(t)<1.
\end{equation}
Cycling this additional restriction on the range of $\Theta(t)$ into \eqref{convex}, we obtain the improvement
\begin{equation}
X''(t)\ge 2E(0)\;[U(t)+\us(1-U(t))],
\end{equation}
 or equivalently,
 \begin{equation}
 \label{bootstrap}
\frac{X''(t)}{2E(0)}-U(t)\ge\us(1-U(t)).
\end{equation}

Differentiation of  the function $U(t)$ defined in \eqref{uudef} yields

\begin{equation}
U'(t)
\label{uode}
=\frac{X'(t)}{X(t)}\left\{\frac{X''(t)}{2E(0)}-U(t)\right\}.
\end{equation}
Combining \eqref{expasympbehav}, \eqref{bootstrap}, \eqref{uode}, we get 
\begin{equation}
U'(t)\ge\us\frac{X'(t)}{X(t)}[1-U(t)], \quad t\ge T.
\end{equation}

Integration of this differential inequality yields
\begin{equation}
[1-U(T)]\left[\frac{X(T)}{X(t)}\right]^{\us}\ge 1-U(t)\ge 1-\Theta(t)={E_P(t)}/{E(0)},\quad t\ge T.
\end{equation}
The lower bound of \eqref{detgrowth}, for positive times, is a consequence of this estimate, \eqref{exasympbehav}, and the definition of $E_P(t)$.
The estimate for negative times follows by time reversibility.
  \end{proof}
\begin{remark}
The identity \eqref{virialid} satisfied by the function $X(t)$ is the affine version of the integral identity used in \cite{sideris-1985}, \cite{sideris-2014},
insofar as
\begin{equation}
c_0\; X(t)= \frac12\intomt|x|^2\rho (t,x)dx, \quad c_0= \frac{4\pi}{3}\int_0^1s^5\rho_0(s)ds.
\end{equation}
In fact, the lemma holds for any globally defined general flow.
\end{remark}

\begin{remark}
It follows from Theorem \ref{compspread} and \eqref{stateeqn}, \eqref{rhodef}, \eqref{epsdef}, that the pressure satisfies
\begin{equation}
\|p(t,\cdot)\|_{L^\infty}\lesssim (1+|t|)^{-\gamma p}. 
\end{equation}
\end{remark}

\begin{remark}
Bounds for the potential energy in compressible flow were also investigated by Chemin in \cite{Chemin}.
\end{remark}

\begin{theorem}
\label{incompspread}
If $A\in C(\rr,\slt)\cap C^2(\rr,\mm^2)$ is a solution of \eqref{IODE} with $\kappa(t)\ge0$ for $t\ge T$ and $\omt=A(t)B$, then
\begin{equation}
\label{inctracegrowth}
\diam \omt\sim(\tr A(t)A(t)^\top)^{1/2} \sim 1+|t|,\quad t\ge T.
\end{equation}

\end{theorem}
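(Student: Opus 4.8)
The plan is to mirror the structure of the proof of Theorem \ref{compspread}, replacing the potential-energy term by the curvature/pressure term and exploiting that in the incompressible case $\det A(t)\equiv 1$. Set $X(t)=\tfrac12\tr A(t)A(t)^\top$ and recall $E_K(t)=\tfrac12\tr A'(t)A'(t)^\top=E_K(0)$ is conserved by \eqref{ienergyid}. First I would compute the virial identity: differentiating $X$ twice and using \eqref{IODE} together with $\tr(A(t)^{-\top}A(t)^{-1}\cdot A(t)^\top)=\tr A(t)^{-1}=\tr A(t)^{-\top}$ — more precisely $\sum_{ij}(A^{-\top})_{ij}A_{ij}=\tr(A^{-1}A)=3$ — one gets
\begin{equation}
X''(t)=2E_K(0)+3\,\Lambda(A(t)),
\end{equation}
where $\Lambda(A(t))=\tr(A'(t)A(t)^{-1})^2/\tr(A(t)^{-\top}A(t)^{-1})\ge0$ exactly when $\kappa(t)\ge0$ (the two share a sign by \eqref{curvaturedef}). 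This is the analogue of \eqref{virialid}, and the crucial structural point is that the coefficient of $E_K(0)$ is again $2$.

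Next, using $\Lambda(A(t))\ge0$ for $t\ge T$, I get the one-sided bound $X''(t)\ge 2E_K(0)$ for $t\ge T$. If $E_K(0)>0$, integrating twice gives $X(t)\gtrsim 1+t^2$ from below, and combined with $X(t)>0$ this yields a lower bound; for the matching upper bound one uses $X(t)\le X(T)+X'(T)(t-T)+\int_T^t\!\int_T^s X''$, so an upper bound on $X''$ is needed. Here is where I must work: $X''(t)=2E_K(0)+3\Lambda(A(t))$ is not obviously bounded above, since $\Lambda$ involves $A(t)^{-1}$. The remedy is the Cauchy–Schwarz estimate as in \eqref{csbound}: $|X'(t)|\le 2X(t)^{1/2}E_K(0)^{1/2}$, hence $(X^{1/2})'$ is bounded, giving $X(t)^{1/2}\le X(0)^{1/2}+E_K(0)^{1/2}|t|$, i.e. the upper bound $X(t)\lesssim 1+t^2$ holds unconditionally. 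Together with the lower bound this gives $X(t)\sim 1+t^2$ for $t\ge T$, and then \eqref{diamsimtr} (equivalently the displayed relation $\diam\omt\sim(\tr AA^\top)^{1/2}$) finishes the proof.

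The one genuine subtlety, and the main obstacle, is the degenerate case $E_K(0)=0$: then $A'(t)\equiv0$ (since $\tr A'A'^\top=0$ forces $A'=0$), so $A(t)\equiv A(0)$ is constant, $\omt$ is a fixed ellipsoid, and $\diam\omt\sim 1$ is constant — which is still $\sim 1+|t|$ only trivially fails; in fact it is $\not\sim 1+|t|$. So the statement implicitly presumes $E_K(0)>0$, and I would note that $E_K(0)>0$ is automatic once one wants a nontrivial (non-stationary) flow, or simply state the hypothesis; in any case the curvature $\kappa(t)$ in \eqref{curvaturedef} is only defined when $E_K(0)>0$, so the hypothesis $\kappa(t)\ge0$ already carries the assumption $E_K(0)>0$. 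With that understood, the argument above — virial identity, one-sided $X''\ge 2E_K(0)$ lower bound, Cauchy–Schwarz upper bound — is routine and parallels Theorem \ref{compspread} closely, the only difference being that there is no potential-energy bootstrap to run since the volume is already pinned at $1$.
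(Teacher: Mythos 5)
Your proof is correct and shares the paper's core structure (the virial identity $X''=2E_K(0)+3\Lambda(A)$ and the curvature lower bound $X''\ge 2E_K(0)$ for $t\ge T$), but you handle the upper bound differently. The paper observes that $\Lambda(A(t))$ is in fact \emph{always} bounded, regardless of $A(t)^{-1}$: using $\tr A^{-\top}A^{-1}=|A^{-1}|^2$ one has $|\Lambda(A)|\le |A'A^{-1}|^2/|A^{-1}|^2\le |A'|^2=2E_K(0)$, so that $X''\sim E_K(0)$ and $X(t)\sim 1+t^2$ follows at once. You instead worried that $\Lambda$ is ``not obviously bounded above'' and routed around it via $|X'|\le 2X^{1/2}E_K^{1/2}$, so $(X^{1/2})'\le E_K(0)^{1/2}$ and $X\lesssim 1+t^2$ without invoking the ODE at all. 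Both derivations of the upper bound are valid; yours is a touch more elementary (it uses only the conserved $E_K$ and Cauchy--Schwarz), while the paper's is slightly sharper in spirit since it shows the two-sided bound $X''\sim E_K(0)$ rather than only $X\sim 1+t^2$. Your additional remark that the hypothesis $\kappa(t)\ge0$ implicitly presupposes $E_K(0)>0$ (because $\kappa$ in \eqref{curvaturedef} is undefined otherwise, and $E_K(0)=0$ forces the trivial stationary solution) is a correct and worthwhile observation that the paper does not spell out.
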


\begin{proof}
Consider once again the function $X(t)=\frac12\tr A(t)A(t)^\top$.
As in the proof of Theorem \ref{compspread}, we obtain from \eqref{IODE},
\begin{equation}
X''(t)=2E_K(t)+3\Lambda(A(t)),\quad \Lambda(A(t))=\frac{\tr (A'(t)A(t)^{-1})^2}{\tr A(t)^{-\top}A(t)^{-1}}.
\end{equation}
Recall that $\kappa(t)\ge0$ implies that $\Lambda(A(t))\ge0$, and so we have that
\begin{equation}
X''(t)\ge 2E_K(t)=2E_K(0), \quad t\ge T.
\end{equation}

On the other hand, using $|\cdot|$ for the Euclidean norm, we have
$\tr A(t)^{-\top}A(t)^{-1}=|A(t)^{-1}|^2$, and so
\[
|\Lambda(A(t))|\lesssim |A'(t)|^2=2E_K(t)=2E_K(0).
\]

Therefore, we obtain
\begin{equation}
X''(t)\sim E_K(0),
\end{equation}
and then $X(t)\sim 1+t^2$, for $t\ge T$.
  \end{proof}

\section{Cauchy Problem at Infinity for Compressible Affine Motion}
\label{cpatinfsec}

We now consider  the asymptotic  behavior of solutions to the system \eqref{ODE}.
Having just shown that solutions $A(t)$ satisfy $\lim_{t\to\infty}\det A(t)=\infty$, it is reasonable to guess from \eqref{ODE} that
$\lim_{t\to\infty} A''(t)=0$.  This would suggest that the solution $A(t)$ approaches a free state of the form $A_\infty(t)=A_0+tA_1$, as $t\to\infty$.
In order to establish a result of this type, it is important to first understand the behavior of $N(A_\infty(t))$, as $t\to\infty$,
where $N(A)$ is the nonlinearity \eqref{nonlinform}.

\begin{lemma}
\label{asymptoticstates}
Let $A_0,A_1\in\mm^3$, and define $A_\infty(t)=A_0+tA_1$.  
Assume that 
\begin{equation}
\label{detgrow}
\lim_{t\to\infty}\det A_\infty(t)=+\infty.
\end{equation} 
Then
\begin{equation}
d\equiv\deg\det A_\infty(t)\in\{1,2,3\},
\end{equation}
$A_\infty(t)\in\glt$  for $t\gg1$, and
\begin{equation}
\|A_\infty(t)^{-1}\|\sim t^a,\quad t\gg1,\quad  \text{with}\quad a
=
\begin{cases}
-1, & \mbox{if $d=3$}\\
0,& \mbox{if $d=2$}\\
0\text{ or }1,&\mbox{if $d=1$}.
\end{cases}
\end{equation}

\end{lemma}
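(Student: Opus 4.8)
The plan is to analyze $\det A_\infty(t) = \det(A_0 + tA_1)$ as a polynomial in $t$ and to read off the growth of $\|A_\infty(t)^{-1}\|$ from the structure of the cofactor matrix, using the identity $A_\infty(t)^{-1} = (\det A_\infty(t))^{-1}(\cof A_\infty(t))^\top$ from \eqref{invform}. First I would observe that $q(t) \equiv \det(A_0 + tA_1)$ is a polynomial of degree at most $3$ in $t$, with leading coefficient $\det A_1$; more precisely, expanding multilinearly in the columns, the coefficient of $t^k$ is the sum of the $3\times 3$ determinants obtained by choosing $k$ columns from $A_1$ and $3-k$ columns from $A_0$. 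Since \eqref{detgrow} forces $q(t)\to+\infty$, the polynomial $q$ is non-constant and has positive leading coefficient, so $d=\deg q \in \{1,2,3\}$ and $q(t)\sim t^d$ for $t\gg 1$; in particular $q(t)>0$ eventually, i.e.\ $A_\infty(t)\in\glt$ for $t\gg 1$.

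Next I would control the entries of $\cof A_\infty(t)$. Each entry of $\cof(A_0+tA_1)$ is a $2\times 2$ minor of $A_0 + tA_1$, hence a polynomial in $t$ of degree at most $2$; by the same multilinear expansion its coefficient of $t^2$ comes from the corresponding $2\times 2$ minor of $A_1$. Therefore $\|\cof A_\infty(t)\| \lesssim 1 + t^2$, so
\begin{equation}
\|A_\infty(t)^{-1}\| = \frac{\|(\cof A_\infty(t))^\top\|}{\det A_\infty(t)} \lesssim \frac{1+t^2}{t^d}, \quad t\gg 1,
\end{equation}
which already gives the upper bounds: $t^{-1}$ when $d=3$, $t^0$ when $d=2$, $t^{1}$ when $d=1$. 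For the matching lower bounds one can use $\|A_\infty(t)^{-1}\| \ge \|A_\infty(t)\|^{-1}$ together with $\|A_\infty(t)\|\lesssim 1+t$, giving $\|A_\infty(t)^{-1}\|\gtrsim t^{-1}$ always; this closes the case $d=3$ exactly. For $d=2$, I would note that if $\cof A_\infty(t)$ had all entries $O(t)$ then $\|A_\infty(t)^{-1}\| = O(t^{-1})$, forcing $\|A_\infty(t)\|\gtrsim t$, but more sharply I would argue directly: $A_\infty(t)^{-1}A_\infty(t) = I$ shows $\|A_\infty(t)^{-1}\|\cdot\|A_\infty(t)\|\ge \sqrt{3}$; combined with the fact that when $d=2$ the matrix $A_1$ is singular, the genuinely quadratic growth of some $2\times 2$ minor of $A_0+tA_1$ (which must occur, else $q$ would not attain degree $2$) produces an entry of $\cof A_\infty(t)$ of size $\sim t^2$, hence $\|A_\infty(t)^{-1}\|\gtrsim t^2/t^2 \sim 1$. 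The case $d=1$ is the genuinely ambiguous one: here both a linear-growth and a constant-growth scenario for $\cof A_\infty(t)$ are possible depending on whether the $2\times 2$ minors of $A_1$ all vanish, which is exactly why $a\in\{0,1\}$ rather than a single value; I would exhibit the dichotomy by splitting on whether $\rank A_1 = 1$ forces all $2\times2$ minors of $A_1$ to be zero (it does), so $\cof A_\infty(t)$ is at most linear in $t$, giving $\|A_\infty(t)^{-1}\|\lesssim t^1$, while the lower bound $\gtrsim t^{-1}$ from $\|A_\infty(t)\|\lesssim 1+t$ permits either $t^0$ or $t^1$, and concrete examples realize both.

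The main obstacle I anticipate is the bookkeeping in the $d=1$ case: establishing that the exponent is genuinely not pinned down requires producing explicit pairs $(A_0,A_1)$ — one with $\|A_\infty(t)^{-1}\|\sim 1$ and one with $\|A_\infty(t)^{-1}\|\sim t$ — and verifying that $\det A_\infty(t)$ is linear with positive leading behavior in each. A clean way to organize all three cases uniformly is to track, for each $k\in\{0,1,2\}$, the maximal degree $m_k$ of a $k\times k$ minor of $A_0+tA_1$ as a polynomial in $t$ (so $m_0=0$, $m_3 = d$), observe $m_{k} \le m_{k+1}$ and $m_{k+1}\le m_k + 1$ since adjoining one more row/column of $A_1$ raises the degree by at most one, and then note $\|A_\infty(t)^{-1}\|\sim t^{m_2 - m_3}$. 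The possible values of $m_2 - d$ are then exactly $-1$ (if $d=3$), $0$ (if $d=2$, forced since $m_2\le d = 2$ and $m_2\ge d-1=1$, and one checks $m_2=2$ here), and $0$ or $1$ (if $d=1$, where $m_2\in\{1,2\}$ a priori but $m_2\le 2$ with both $m_2=1$ and — after a moment's thought, ruling out $m_2 = 2$ when $d=1$? here care is needed) — so the honest statement is $m_2\in\{1,2\}$ giving $a\in\{0,1\}$. I would present the argument via these minor-degrees $m_k$, as it makes the appearance of the three exponents transparent and isolates exactly where the $d=1$ ambiguity enters.
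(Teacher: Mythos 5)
Your overall plan — read off the growth of $\|A_\infty(t)^{-1}\|$ from $\cof A_\infty(t)$ and $\det A_\infty(t)$ via \eqref{invform} — is exactly what the paper does, and the upper bounds $\|A_\infty(t)^{-1}\|\lesssim t^{2-d}$ are established correctly. However, there are two genuine problems with the lower bounds, which are the substantive part of the lemma.

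First, the organizing inequality $m_k\le m_{k+1}$ in your final paragraph is false. The paper's own example, immediately following the lemma,
\begin{equation}
A_\infty(t)=\begin{bmatrix}t&0&0\\0&t&1\\0&-1&0\end{bmatrix},
\end{equation}
has $\det A_\infty(t)=t$ (so $m_3=d=1$) but the $(3,3)$ cofactor is $t^2$, so $m_2=2>m_3$. You sense the trouble (``ruling out $m_2=2$ when $d=1$? here care is needed'') but never resolve it, and in fact $m_2=2$ must \emph{not} be ruled out — it is precisely the source of the case $a=1$.

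Second, and more importantly, you never establish the \emph{lower} bound $m_2\ge 1$ (equivalently $a\ge 0$) when $d\in\{1,2\}$; the only lower bound you invoke, $\|A_\infty(t)^{-1}\|\ge\|A_\infty(t)\|^{-1}\gtrsim t^{-1}$, gives $a\ge -1$ and so does not exclude $a=-1$ from the allowed set in the cases $d=1,2$. (For $d=2$ your remark that a quadratic $2\times2$ minor ``must occur, else $q$ would not attain degree 2'' is true but is stated without justification; for $d=1$ you give nothing.) The paper closes this gap cleanly by expanding $(\cof A_\infty(t))^\top A_\infty(t)=\det A_\infty(t)\,I$ in powers of $t$: the degree-$d$ coefficient equation shows that if the top cofactor coefficients all vanished, then some $C_j^\top A_1=\beta_d I$ with $\beta_d\ne0$, forcing $A_1$ invertible and hence $d=3$, a contradiction. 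That single algebraic observation is what pins down the lower bound $b\ge d-1$ on $\deg\cof A_\infty(t)$ in every case, and it is the key step your proposal is missing; I would recommend replacing the $m_k$-monotonicity heuristic with this coefficient-matching argument.
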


\begin{proof}
The assumption \eqref{detgrow} implies that $A_\infty(t)\in\glt$  for $t\gg1$.

Since $ A_\infty(t)$ is linear in $t$, we can write
\begin{equation}
\det A_\infty(t)=\sum_{j=0}^3\beta_jt^j.
\end{equation}
If $d=\deg A_\infty(t)$, then   \eqref{detgrow} implies that $d\in\{1,2,3\}$.
Note that the coefficient of $t^3$ is $\beta_3=\det A_1$.

Again since $ A_\infty(t)$ is linear in $t$, its cofactor matrix has the form:
\begin{equation}
\cof A_\infty(t)=\sum_{j=0}^2C_jt^j,
\end{equation}
and $\|\cof A_\infty(t)\|\sim t^b$, for $t\gg1$, $b\in\{0,1,2\}$.
Since

\begin{equation}
\label{cofeqn}
(\cof A_\infty(t))^\top A_\infty(t)=\det A_\infty(t)\;I,
\end{equation}
we have that $a=b-d$.
We can identify powers of $t$ in \eqref{cofeqn} to arrive at the system
\begin{align}
\label{coef3}
&C_2^\top A_1=\beta_3I=\det A_1\;I\\
\label{coef2}
&C_2^\top A_0+C_1^\top A_1=\beta_2I\\
\label{coef1}
&C_1^\top A_0+C_0^\top A_1=\beta_1I
\end{align}

  Notice that $d=3$ if and only if $A_1$ is invertible.  In this case, \eqref{coef3} gives $C_2^\top=\det A_1 A_1^{-1}\ne0$,
  and so $b=2$ and $a=-1$.
  
  Next, suppose that $d=2$, so that $\det A_1=0$ and $\beta_2\ne0$.  If  $C_2=0$, then \eqref{coef2}
  would imply that $A_1$ is invertible, a contradiction.  Thus, again we find that $b=2$,
  and so $a=0$.
  
  Finally, assume that $d=1$.  Then $\det A_1=\beta_2=0$ and $\beta_1\ne0$.  If $C_2=C_1=0$, then
  \eqref{coef1} would imply that $A_1$ is invertible, again a contradiction.  Thus,
  $b\in\{1,2\}$, from which follows $a\in\{0,1\}$.
  \end{proof}

\begin{remark}

If
\begin{equation}
A_\infty(t)=
\begin{bmatrix}
t&0&0\\
0&t&1\\
0&-1&0
\end{bmatrix},
\end{equation}
then $b=2$, $d=a=1$.
The other cases can be illustrated with diagonal matrices.
\end{remark}

The next lemma will aid in the formulation of the Cauchy problem at infinity.

\begin{lemma}
\label{cpatinf}
Suppose that $A\in C^2(\rr,\mm^3)$ satisfies $\|A''(t)\|\lesssim t^{-\mu-2}$, $t\gg1$, for some $\mu>0$.
Then
\begin{equation}
\label{solatinf}
 A(t)=A_\infty(t)+\int_t^\infty\int_s^\infty A''(\sigma)d\sigma ds,
\end{equation}
in which
\begin{align}
&A_\infty(t)=    A_0 + t A_1\\
&  A_1=A'(0)+\int_0^\infty A''(\sigma)d\sigma,\\
&  A_0=A(0)-\int_0^\infty\int_s^\infty A''(\sigma)d\sigma ds.
\end{align}
Moreover, the following  estimates hold:
\begin{equation}
\|A^{(k)}(t)-A^{(k)}_\infty(t)\|\lesssim t^{-\mu-k},\quad t\gg1,\quad k=0,1,2.
\end{equation}
\end{lemma}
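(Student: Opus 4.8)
The plan is to integrate the hypothesis on $A''$ twice, tracking carefully the convergence of the improper integrals that appear; the assumption $\mu>0$ is precisely what guarantees that both integrations converge absolutely at infinity, since $\mu+2>1$ and $\mu+1>1$. No Picard-type argument is needed: unlike a genuine Cauchy problem at infinity for \eqref{ODE}, here $A$ is already given and the lemma is just a bookkeeping statement about iterated integrals of a fast-decaying function.

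First I would carry out the first integration. By the fundamental theorem of calculus, $A'(t)=A'(0)+\int_0^t A''(\sigma)\,d\sigma$. Since $\|A''(\sigma)\|\lesssim\sigma^{-\mu-2}$ for $\sigma\gg1$, the integral $\int_0^\infty A''(\sigma)\,d\sigma$ converges absolutely; I define $A_1=A'(0)+\int_0^\infty A''(\sigma)\,d\sigma$. Then $A'(t)-A_1=-\int_t^\infty A''(\sigma)\,d\sigma$, so that
\[
\|A'(t)-A_1\|\le\int_t^\infty\|A''(\sigma)\|\,d\sigma\lesssim\int_t^\infty\sigma^{-\mu-2}\,d\sigma\sim t^{-\mu-1},\quad t\gg1,
\]
which is the claimed estimate for $k=1$. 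The case $k=2$ is immediate: $A_\infty''\equiv0$, hence $\|A''(t)-A_\infty''(t)\|=\|A''(t)\|\lesssim t^{-\mu-2}$.

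Next I would integrate a second time. Set $g(s)=\int_s^\infty A''(\sigma)\,d\sigma=A_1-A'(s)$; by the previous step $\|g(s)\|\lesssim s^{-\mu-1}$, which is again integrable at infinity, so $\int_t^\infty g(s)\,ds$ converges absolutely (applying Tonelli to the nonnegative integrand $\|A''\|$ also legitimizes the double integral $\int_t^\infty\int_s^\infty A''(\sigma)\,d\sigma\,ds$). Writing $A(t)=A(0)+\int_0^t A'(s)\,ds=A(0)+tA_1-\int_0^t g(s)\,ds$ and splitting $\int_0^t g=\int_0^\infty g-\int_t^\infty g$ gives
\[
A(t)=\Bigl(A(0)-\int_0^\infty g(s)\,ds\Bigr)+tA_1+\int_t^\infty\int_s^\infty A''(\sigma)\,d\sigma\,ds=A_0+tA_1+\int_t^\infty\int_s^\infty A''(\sigma)\,d\sigma\,ds,
\]
with $A_0=A(0)-\int_0^\infty\int_s^\infty A''(\sigma)\,d\sigma\,ds$ and $A_1$ as above, which is exactly \eqref{solatinf}. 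Finally $\|A(t)-A_\infty(t)\|=\bigl\|\int_t^\infty g(s)\,ds\bigr\|\le\int_t^\infty\|g(s)\|\,ds\lesssim\int_t^\infty s^{-\mu-1}\,ds\sim t^{-\mu}$, the $k=0$ estimate. The only points requiring any care are the absolute convergence of each integral (the reason $\mu>0$ enters) and the meaning of the iterated integral; both follow at once from the decay hypothesis, so there is no real obstacle.
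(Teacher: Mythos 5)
Your proof is correct and amounts to the same elementary argument as the paper: the paper defines the right-hand side of \eqref{solatinf} as $\bar A$, checks $\bar A''=A''$ with matching initial conditions at $0$, and invokes uniqueness, whereas you derive the identity forward by applying the fundamental theorem of calculus twice and splitting $\int_0^t$ into $\int_0^\infty-\int_t^\infty$ — two phrasings of the same calculation. Both hinge on exactly the same observation you emphasize, namely that $\mu>0$ makes $\int^\infty t^{-\mu-2}\,dt$ and $\int^\infty t^{-\mu-1}\,dt$ convergent, which legitimizes the constants $A_1$, $A_0$ and the iterated tail integral.
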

\begin{proof}
Let $\bar A(t)$ denote the function
on the right-hand side of \eqref{solatinf}.
Given the decay rate for $A''(t)$, the constants $A_0$, $A_1$, and the function $\bar A(t)$  are well-defined.
Note that $\bar A(t)$ satisfies
$\bar A''(t)=A''(t)$, $\bar A(0)= A(0)$, $\bar A'(0)=A'(0)$.
By uniqueness, $A(t)=\bar A(t)$ for all $t\in\rr$.
Therefore, $A(t)-A_\infty(t)=\bar A(t)-A_\infty(t)$ satisfies  the desired estimates.
  \end{proof}

\begin{remark}
This result motivates the condition on $\mu$
in the following theorem.
\end{remark}

\begin{theorem}
\label{compasymst}
Let $A_0,A_1\in\mm^3$,  define $A_\infty(t)=A_0+tA_1$, and assume that \eqref{detgrow} holds.  
Let $a$ and  $d$ be the integers defined in Lemma \ref{asymptoticstates}. 

If $a\ne1$ and $\mu\equiv d(\gamma-1)-a-2 >0$,
  then \eqref{ODE} has a unique global solution  
  \[
  A\in C(\rr,\glt)\cap C^\infty(\rr,\mm^3)
  \]
   with
\begin{equation}
\label{liminfz}
\lim_{t\to\infty}\|A(t)-A_\infty(t)\|=0.
\end{equation}
Moreover, the solution satisfies the decay estimates
\begin{equation}
\|A^{(k)}(t)-A^{(k)}_\infty(t)\|\lesssim t^{-\mu-k},\quad k=0,1,2.
\end{equation}

If $a=d=1$ and $\gamma>5$,
  then \eqref{ODE} has a {unique} global solution  
  \[
  A\in C(\rr,\glt)\cap C^\infty(\rr,\mm^3)
  \]
   with
\begin{equation}
\lim_{t\to\infty}t\|A(t)-A_\infty(t)\|=0.
\end{equation}
Moreover, the solution satisfies the decay estimates
\begin{equation}
\|A^{(k)}(t)-A^{(k)}_\infty(t)\|\lesssim t^{4-\gamma-k},\quad k=0,1,2.
\end{equation}
\end{theorem}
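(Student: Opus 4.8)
The plan is to build the solution by a fixed point argument for the Cauchy problem posed at $t=+\infty$, in the form anticipated by Lemma \ref{cpatinf}, with the free state $A_\infty(t)=A_0+tA_1$ prescribed. Write $N(A)=(\det A)^{1-\gamma}A^{-\top}$ for the nonlinearity \eqref{nonlinform}, and let $\mu$ denote the decay exponent appearing in the respective conclusion, i.e.\ $\mu=d(\gamma-1)-a-2$ in the first case and $\mu=\gamma-4$ in the second. By Lemma \ref{asymptoticstates}, $A_\infty(t)\in\glt$ for $t\gg1$, with $\det A_\infty(t)\sim t^d$ and $\|A_\infty(t)^{-1}\|\sim t^a$, so that $\|N(A_\infty(t))\|\sim t^{d(1-\gamma)+a}=t^{-\mu-2}$, matching the hypothesis of Lemma \ref{cpatinf}. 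The decisive inequality is $\mu-a>0$: in the first case $a\in\{-1,0\}$, so $\mu-a\ge\mu>0$; in the second case $a=1$ and $\mu-a=\gamma-5>0$ exactly because $\gamma>5$.

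Fix $R>0$ and then $T>0$, both large, to be chosen, and let $\xx$ be the complete metric space of functions $A\in C([T,\infty),\mm^3)$ with $\sup_{t\ge T}t^{\mu}\|A(t)-A_\infty(t)\|\le R$, metrized by that weighted supremum distance. Define
\[
\Phi(A)(t)=A_\infty(t)+\int_t^\infty\int_s^\infty N(A(\sigma))\,d\sigma\,ds .
\]
First I would show $\Phi:\xx\to\xx$. For $A\in\xx$ and $\sigma\ge T$, the bound $\|A(\sigma)-A_\infty(\sigma)\|\le R\sigma^{-\mu}$ together with $\|A_\infty(\sigma)^{-1}\|\sim\sigma^a$ and $\mu-a>0$ makes $\|A(\sigma)-A_\infty(\sigma)\|\,\|A_\infty(\sigma)^{-1}\|$ as small as we like once $T$ is large; hence $A(\sigma)\in\glt$ with $\det A(\sigma)\sim\sigma^d$ and $\|A(\sigma)^{-1}\|\sim\sigma^a$, so $\|N(A(\sigma))\|\lesssim\sigma^{-\mu-2}$ with a constant independent of $R$. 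Integrating twice gives $\|\Phi(A)(t)-A_\infty(t)\|\lesssim t^{-\mu}$, and the constant can be absorbed by choosing $R$ large first and then $T$ large.

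Next I would prove $\Phi$ is a contraction. A direct differentiation gives $\|DN(M)\|\lesssim(\det M)^{1-\gamma}\|M^{-1}\|^2$, so along the segment joining two elements of $\xx$ evaluated at $\sigma\ge T$ (which stays in $\glt$ with the same size estimates) one gets, using $d(\gamma-1)=\mu+a+2$, that $\|N(A(\sigma))-N(B(\sigma))\|\lesssim\sigma^{d(1-\gamma)+2a}\|A(\sigma)-B(\sigma)\|\lesssim\sigma^{-(2\mu-a+2)}\|A-B\|_\xx$. Integrating twice (the exponent exceeds $1$ since $2\mu-a>0$) yields $\|\Phi(A)(t)-\Phi(B)(t)\|\lesssim t^{-(2\mu-a)}\|A-B\|_\xx$, hence a contraction factor $\lesssim T^{-(\mu-a)}<1$ for $T$ large. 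The fixed point $A\in\xx$ is continuous and $\glt$-valued on $[T,\infty)$, so $N(A(\cdot))$ is continuous, $\Phi(A)\in C^2$ with $(\Phi(A))''=N(A)$, and therefore $A$ solves \eqref{ODE} on $[T,\infty)$ with $(A(T),A'(T))\in\glt\times\mm^3$. The global existence lemma for \eqref{ODE} then extends $A$ uniquely to $C(\rr,\glt)\cap C^\infty(\rr,\mm^3)$, and Lemma \ref{cpatinf} supplies the decay estimates for $A^{(k)}-A_\infty^{(k)}$; in the second case $\mu=\gamma-4>1$ gives $t\|A(t)-A_\infty(t)\|\lesssim t^{1-\mu}\to0$.

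Finally, for uniqueness among global solutions of \eqref{ODE} satisfying the respective asymptotic condition: such an $A$ has $\|A(t)-A_\infty(t)\|\,\|A_\infty(t)^{-1}\|\to0$ — immediate when $a\le0$, and when $a=1$ this is where the stronger hypothesis $t\|A(t)-A_\infty(t)\|\to0$ is used — hence $\|A''(t)\|=\|N(A(t))\|\lesssim t^{-\mu-2}$, and Lemma \ref{cpatinf} yields $A(t)=\tilde A_0+t\tilde A_1+\oo(t^{-\mu})$; comparison with the asymptotic condition forces $\tilde A_0=A_0$, $\tilde A_1=A_1$, so $A\in\xx$ (for $R$ large) and equals the fixed point on $[T,\infty)$, whence uniqueness on all of $\rr$ by uniqueness for the ODE. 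I expect the main obstacle to be the decay bookkeeping that makes both the self-mapping and the contraction double integrals converge: this is exactly where $\mu-a>0$ is required, and where the case $a=1$ must be strengthened from the bare condition $\mu>0$ (i.e.\ $\gamma>4$) to $\gamma>5$, so that the perturbed deformation gradient stays in $\glt$ with $\|A(t)^{-1}\|$ of the expected order $t^{a}$.
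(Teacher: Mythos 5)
Your proposal is correct and follows the same overall strategy as the paper: a contraction-mapping argument for the Cauchy problem posed at $t=+\infty$, built around the operator $\Phi(A)(t)=A_\infty(t)+\int_t^\infty\int_s^\infty N(A(\sigma))\,d\sigma\,ds$ and invoking Lemma~\ref{cpatinf} both for the construction and for the uniqueness step. There are two technical differences worth noting, both of which clean up the paper's treatment. First, the paper runs the fixed point twice on two different Banach spaces, namely the unweighted space $\xx(T)$ with $\|B\|_T=\sup_{t\ge T}\|B(t)\|$ when $a\ne1$, and the once-weighted space $\bar\xx(T)$ with $\sup_{t\ge T}t\|B(t)\|$ when $a=d=1$; you instead use a single $t^\mu$-weighted space that covers both cases, and in doing so you isolate the one inequality, $\mu-a>0$, that simultaneously keeps the perturbed matrix $A(\sigma)=A_\infty(\sigma)+B(\sigma)$ in $\glt$ with $\|A(\sigma)^{-1}\|\sim\sigma^a$, makes $\Phi$ self-mapping, and makes it a contraction; this gives a transparent explanation of why the threshold jumps from $\mu>0$ to $\gamma>5$ exactly when $a=1$. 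Second, rather than the paper's factorization $N(A_\infty+B)=N(A_\infty)\,N(I+A_\infty^{-1}B)$ together with the near-identity Lipschitz bound \eqref{nll0}, you use the direct derivative estimate $\|DN(M)\|\lesssim(\det M)^{1-\gamma}\|M^{-1}\|^2$, which is equivalent in output but avoids the intermediate normalization. The decay bookkeeping in your contraction estimate is right: with $d(\gamma-1)=\mu+a+2$, the integrand exponent $-(2\mu-a+2)$ is strictly less than $-2$ under $\mu-a>0$ (using $\mu>0$), so the double integral converges and produces a contraction factor $T^{-(\mu-a)}$. The uniqueness paragraph is also in line with the paper, relying on the observation that the assumed asymptotics already force $\|A(t)-A_\infty(t)\|\,\|A_\infty(t)^{-1}\|\to0$ (automatically for $a\le0$, by the strengthened $t\|A-A_\infty\|\to0$ hypothesis for $a=1$), so any such solution pulls $\|A''\|\lesssim t^{-\mu-2}$ and lands in the ball as a fixed point.
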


\begin{proof}
Choose $T$ sufficiently large so that $A_\infty(t)$ is invertible, for $t\ge T$.  Then, by the definition \eqref{nonlinform}
and Lemma \ref{asymptoticstates}, we have 
\begin{equation}
\label{mainnonlinest}
\|N(A_\infty(t))\|={(\det A_\infty(t))^{1-\gamma}}{\|A_\infty(t)^{-1}\|}\lesssim t^{a-d(\gamma-1)}=t^{-\mu-2},
\end{equation}
for $t\ge T$.

The function $B\mapsto N(I+B)$ is well-defined and $C^1$ for all $B\in\mm^3$ with  $\|B\|\le1/2$.  Therefore, there exists a constant $C_N>0$
such that
\begin{gather}
\label{nbdd0}
\|N(I+B_1)\|\le C_N
\intertext{and}
\label{nll0}
\|N(I+B_1)-N(I+B_2)\|\le C_N\|B_1-B_2\|,
\end{gather}
for all $\|B_1\|,\|B_2\|\le1/2$.

Now assume that $a\ne1$, so that $\|A_\infty(t)^{-1}\|$ is uniformly bounded.
Define the Banach space
\begin{equation}
\xx(T)=\{B\in C(\rr,\mm^3):\|B\|_T\equiv\sup_{t\ge T}\|B(t)\|<\infty\},
\end{equation}
and the ball
\begin{equation}
\bb_\eps(T)=\{B\in\xx(T):\|B\|_T\le\eps\}.
\end{equation}
Since $A_\infty(\cdot)^{-1}\in\xx(T)$, we may choose $\eps>0$ sufficiently small so that
\begin{equation}
\eps  \|A_\infty(\cdot)^{-1}\|_T \le 1/2.
\end{equation}
Then 
\begin{equation}
\label{Ainvbbd}
\|A_\infty(\cdot)^{-1}B(\cdot)\|_T\le \|A_\infty(\cdot)^{-1}\|_T\|B(\cdot)\|_T\le1/2,
\end{equation}
for all $B\in\bb_\eps(T)$. 

 If $B_i\in\bb_\eps(T)$, $i=1,2$, then
\begin{equation}
\label{nwd}
N(A_\infty(t)+B_i(t))=N(A_\infty(t))N(I+A_\infty(t)^{-1}B_i(t)),
\end{equation}
is well-defined, and by \eqref{mainnonlinest}, \eqref{Ainvbbd}, \eqref{nbdd0}, \eqref{nll0}, the estimates
\begin{multline}
\label{nbdd}
\|N(A_\infty(t)+B_i(t))\|\\
\le\|N(A_\infty(t))\|\|N(I+A_\infty(t)^{-1}B_i(t))\|\lesssim t^{-\mu-2},
\end{multline}
\begin{multline}
\label{nll}
\|N(A_\infty(t)+B_1(t))-N(A_\infty(t)+B_2(t))\|\\
\lesssim t^{-\mu-2}\|B_1(t)-B_2(t)\|,
\end{multline}
hold for all $t\ge T$.

Next, for $B\in\bb_\eps(T)$, define the operator
\begin{equation}
S(B)(t)=\int_t^\infty\int_s^\infty N(A_\infty(\sigma)+B(\sigma))d\sigma ds.
\end{equation}
By \eqref{nwd}, \eqref{nbdd}, and \eqref{nll}, the operator $S$ is well-defined on $\bb_\eps(T)$,
and the following estimates are valid:
\begin{equation}
\label{sbdd}
\|S(B)\|_T\lesssim T^{-\mu},\quad B\in\bb_\eps(T),
\end{equation}
\begin{equation}
\label{sll}
\|S(B_1)-S(B_2)\|_T\lesssim T^{-\mu}\|B_1-B_2\|_T,\quad B_1,B_2\in\bb_\eps(T).
\end{equation}
Therefore, if $T$ is sufficiently large, $S$ is a contraction from $\bb_\eps(T)$ into itself.
By the Contraction Mapping Principle, $S$ has a unique fixed point $B\in\bb_\eps(T)$.  By the definition
of $S$, it follows that this fixed point belongs to $C^\infty([T,\infty),\mm^3)$, and
\begin{equation}
\|B^{(k)}(t)\|\lesssim t^{-\mu-k},\quad t\ge T,\quad k=0,1,2.
\end{equation}
Moreover, $A(t)=A_\infty(t)+B(t)$ solves \eqref{ODE} on the interval $[T,\infty)$, and 
\begin{equation}
\lim_{t\to\infty}\|A(t)-A_\infty(t)\|=\lim_{t\to\infty}\|B(t)\|=0.
\end{equation}

Suppose that $\bar A(t)\in C^2(\rr,\mm^3)$ is a solution of \eqref{ODE}, and let 
\begin{equation}
\bar B(t)=\bar A(t)-A_\infty(t).
\end{equation} 
Assume that $\lim_{t\to\infty}\|\bar B(t)\|=0$.
   Recall that $a\ne1$ and $\|A_\infty(t)^{-1}\|$ is uniformly bounded, so without loss of generality, we may assume
   that the time $T$ defined previously  is  also large  enough so that 
\begin{equation}
\label{innbhd}
\|A_\infty(\cdot )^{-1}\bar B(\cdot )\|_T\le 1/2\quad\text{and}\quad \|\bar B(\cdot )\|_T\le \eps.
\end{equation}
We may  write
\begin{multline}
\bar A''(t)=N(\bar A(t))=N(A_\infty(t)+\bar B(t))
=N(A_\infty(t))N(I+A_\infty(t)^{-1}\bar B(t)),
\end{multline}
and just as in \eqref{nbdd}, we obtain
\begin{equation}
\|\bar A''(t)\|
\lesssim t^{-\mu-2}.
\end{equation}
Therefore, by Lemma \ref{cpatinf}, we find that
\begin{align}
\label{tempie}
\bar A(t)&=\bar A_\infty(t)+\int_t^\infty\int_s^\infty \bar {A}''(\sigma)\;d\sigma ds\\
&=\bar A_\infty(t)+\int_t^\infty\int_s^\infty N(A_\infty(t)+\bar B(t))\;d\sigma ds\\
&=\bar A_\infty(t)+S(\bar B(t)),
\end{align}
where the asymptotic state $\bar A_\infty(t)$ is defined by
\begin{align}
&\bar A_\infty(t)=\bar A_0+t \bar A_1,\\
&\bar A_1=A'(0)+\int_0^\infty N(\bar A(\sigma))d\sigma,\\
&\bar A_0=A(0)-\int_0^\infty\int_s^\infty N(\bar A(\sigma))d\sigma ds.
\end{align}
By Lemma \ref{cpatinf}, we have the estimate
 $\|S(\bar B)(t)\|=\|\bar A(t)-\bar A_\infty(t)\|\lesssim t^{-\mu}$,  and so, using \eqref{liminfz} and \eqref{tempie} we obtain 
\begin{multline}
\limsup_{t\to\infty}\|\bar A_\infty(t)-A_\infty(t)\|\\
\le \limsup_{t\to\infty}\left(\|\bar A_\infty(t)-\bar A(t)\|+\|\bar A(t)-A_\infty(t)\|\right)\\
 \le \limsup_{t\to\infty}\left(\|S(\bar B)(t)\|+\|\bar A(t)-A_\infty(t)\|\right)=0.
\end{multline}
Thus, $\bar A_\infty(t)=A_\infty(t)$.
With this, \eqref{tempie} implies that $\bar B=S(\bar B)$.
By \eqref{innbhd}, $\bar B\in \bb_\eps(T)\subset\xx(T)$, and so by uniqueness of fixed points, $\bar B=B$.
Therefore, $\bar A(t)=A(t)$, for $t\ge T$.  By uniqueness of solutions for \eqref{ODE}, we conclude that
$\bar A(t)=A(t)$, for all $t\in\rr$.

To prove the result in the remaining case, $a=d=1$, we repeat the preceding argument using instead the Banach space
\begin{equation}
\bar\xx(T)=\{B\in C(\rr,\mm^3):\|B\|_T\equiv\sup_{t\ge T}t\|B(t)\|<\infty\}.
\end{equation}
Since $a=1$, we have $\|A_\infty(t)^{-1}\|\sim t$, and the additional decay for the perturbation $B(t)$ provided by the
weight in the norm on $\bar\xx(T)$ ensures that $\|A_\infty(\cdot)^{-1}B(\cdot)\|_T\le1/2$, if $\|B(\cdot)\|_T$ is small.
In order that the fixed point of $S$ lies in $\bar\xx(T)$, we must now have
\begin{equation}
a-d(\gamma-1)<-3,
\end{equation}
 which leads to $\gamma>5$.
  \end{proof}

\begin{corollary}
Let $A_0,A_1\in\mm^3$,  define $A_\infty(t)=A_0+tA_1$, and assume that \eqref{detgrow} holds.  
If $\gamma>5$, then \eqref{ODE} has a unique solution 
\[
A\in C(\rr,\glt)\cap C^\infty(\rr,\mm^3)
\]
 such that
\begin{equation}
\lim_{t\to\infty}\|A(t)-A_\infty(t)\|=0.
\end{equation}
\end{corollary}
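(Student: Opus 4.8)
The plan is to deduce this corollary directly from Theorem \ref{compasymst}, observing that when $\gamma>5$ the hypotheses of that theorem are met in every one of the three possible cases for $d=\deg\det A_\infty(t)$. First I would invoke Lemma \ref{asymptoticstates}: since \eqref{detgrow} holds, $d\in\{1,2,3\}$, the matrix $A_\infty(t)$ is invertible for $t\gg1$, and the integer $a$ takes one of the listed values, with $a=1$ possible only when $d=1$. So the proof reduces to checking, for each admissible pair $(a,d)$, that either the first branch of Theorem \ref{compasymst} applies (i.e.\ $a\ne1$ and $\mu=d(\gamma-1)-a-2>0$) or the second branch applies (i.e.\ $a=d=1$ and $\gamma>5$).

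The routine case analysis runs as follows. If $d=3$ then $a=-1$, so $\mu=3(\gamma-1)+1-2=3\gamma-4$, which is positive for all $\gamma>1$, hence certainly for $\gamma>5$; the first branch of Theorem \ref{compasymst} yields a unique global solution with $\lim_{t\to\infty}\|A(t)-A_\infty(t)\|=0$. If $d=2$ then $a=0$, so $\mu=2(\gamma-1)-0-2=2\gamma-4>0$ whenever $\gamma>2$, again covered by $\gamma>5$; the first branch applies. If $d=1$ and $a=0$, then $\mu=(\gamma-1)-0-2=\gamma-3>0$ for $\gamma>3$, so once more the first branch applies when $\gamma>5$. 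Finally, if $d=1$ and $a=1$, then $a=1$ rules out the first branch, but this is exactly the case $a=d=1$ handled by the second branch of Theorem \ref{compasymst} under the hypothesis $\gamma>5$, which gives the (unique) global solution with $\lim_{t\to\infty}\|A(t)-A_\infty(t)\|=0$ (indeed with the stronger decay $t\|A(t)-A_\infty(t)\|\to0$). Thus in all cases the conclusion holds, and uniqueness in each case is part of the corresponding statement of Theorem \ref{compasymst}.

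There is essentially no obstacle here: the corollary is a packaging of Theorem \ref{compasymst} that hides the dependence on $a$ and $d$ behind the single clean hypothesis $\gamma>5$. The only point requiring a moment's care is the case $d=1$, $a=1$, since there $\mu$ as defined in the first branch would be $(\gamma-1)-1-2=\gamma-4$, which is positive for $\gamma>5$ but the theorem's first branch explicitly excludes $a=1$; one must remember to route this case through the second branch, where the threshold $\gamma>5$ is precisely what is needed. I would therefore present the proof as a four-line case split on $d$ (with the sub-split on $a$ when $d=1$), in each line citing the relevant branch of Theorem \ref{compasymst} and noting that $\gamma>5$ implies the required inequality on $\mu$ (or is itself the required hypothesis).

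\begin{proof}
By Lemma \ref{asymptoticstates}, the assumption \eqref{detgrow} implies that $d\equiv\deg\det A_\infty(t)\in\{1,2,3\}$, that $A_\infty(t)\in\glt$ for $t\gg1$, and that the integer $a$ of Lemma \ref{asymptoticstates} satisfies $a=-1$ if $d=3$, $a=0$ if $d=2$, and $a\in\{0,1\}$ if $d=1$.  In particular $a=1$ can occur only when $d=1$.

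If $a\ne1$, set $\mu=d(\gamma-1)-a-2$.  When $d=3$, $a=-1$, we have $\mu=3\gamma-4>0$; when $d=2$, $a=0$, we have $\mu=2\gamma-4>0$ since $\gamma>5$; when $d=1$, $a=0$, we have $\mu=\gamma-3>0$ since $\gamma>5$.  In each of these cases the first part of Theorem \ref{compasymst} provides a unique global solution $A\in C(\rr,\glt)\cap C^\infty(\rr,\mm^3)$ of \eqref{ODE} with $\lim_{t\to\infty}\|A(t)-A_\infty(t)\|=0$.

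If $a=1$, then $d=1$ as noted above, so $a=d=1$.  Since $\gamma>5$, the second part of Theorem \ref{compasymst} provides a unique global solution $A\in C(\rr,\glt)\cap C^\infty(\rr,\mm^3)$ of \eqref{ODE} with $\lim_{t\to\infty}t\|A(t)-A_\infty(t)\|=0$, and in particular $\lim_{t\to\infty}\|A(t)-A_\infty(t)\|=0$.

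In all cases we have obtained the desired unique global solution.
\end{proof}
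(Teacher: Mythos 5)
Your proof is correct and takes exactly the same route as the paper: the paper's own proof is a one-liner asserting that all the cases of Lemma \ref{asymptoticstates} are covered by Theorem \ref{compasymst} when $\gamma>5$, and your case split on $(a,d)$ is simply the explicit verification of that assertion.
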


\begin{proof}
This follows immediately from Theorem \ref{compasymst} since all the cases in Lemma \ref{asymptoticstates}
are included when $\gamma>5$.
  \end{proof}

\begin{theorem}
\label{waveoperator}
Define $\dd=\glt\times\mm^3$.  

If $\gamma>4/3$, then for any $(A_1,A_0)\in\dd$, 
there exists a unique solution
\[
A\in C(\rr,\glt)\cap C^\infty(\rr,\mm^3)
\]
 of \eqref{ODE} such that \eqref{liminfz} holds.  
Define a mapping ${\mathcal W}_+:\dd\to\dd$ by
\[
{\mathcal W}_+(A_1,A_0)=(A(0),A'(0)).
\]  

If $4/3<\gamma<2$, then ${\mathcal W}_+$ is a bijection.
\end{theorem}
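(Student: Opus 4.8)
The plan is to establish that $\mathcal{W}_+$ is a bijection by checking injectivity and surjectivity separately, leveraging the machinery already built in Theorem \ref{compasymst}. For the \emph{well-definedness} of $\mathcal{W}_+$ on all of $\dd$ when $\gamma > 4/3$: given $(A_1,A_0)\in\dd$ we have $A_1\in\glt$, so $\det A_1 > 0$, which forces $d=3$ and $a=-1$ in Lemma \ref{asymptoticstates}. Then $\mu = d(\gamma-1)-a-2 = 3(\gamma-1)+1-2 = 3\gamma-4 > 0$ precisely when $\gamma > 4/3$, and $a = -1 \ne 1$, so the first case of Theorem \ref{compasymst} applies and produces a unique global solution $A$ with $\lim_{t\to\infty}\|A(t)-A_\infty(t)\| = 0$. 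This makes $\mathcal{W}_+(A_1,A_0) = (A(0),A'(0))$ a well-defined point of $\dd$ (it lies in $\dd$ since $A(0)\in\glt$ and $A'(0)\in\mm^3$).

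\emph{Injectivity.} Suppose $(A_1,A_0)$ and $(\tilde A_1,\tilde A_0)$ both map to the same $(A(0),A'(0))$. Then the solution $A(t)$ of \eqref{ODE} with that initial data is one and the same curve, and the uniqueness half of Theorem \ref{compasymst} says that a solution of \eqref{ODE} asymptotic to a free state $A_0 + tA_1$ (with $\det A_1 > 0$) has that free state uniquely determined — indeed the proof of Theorem \ref{compasymst} shows $\bar A_\infty(t) = A_\infty(t)$ by comparing limits. Hence $A_1 = \tilde A_1$ and $A_0 = \tilde A_0$, giving injectivity; this part works for all $\gamma > 4/3$.

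\emph{Surjectivity} is where the restriction $\gamma < 2$ enters and is the main obstacle. Given arbitrary initial data $(A(0),A'(0))\in\dd$, let $A(t)$ be the global solution of \eqref{ODE} from the third lemma. I must show $A(t)$ is asymptotic to some $A_\infty(t) = A_0 + tA_1$ with $A_1\in\glt$. By Theorem \ref{compspread}, since $\gamma < 2$ we are in the regime where $\det A(t) \sim 1+|t|^3$ (the lower bound $1+|t|^p$ with $p=3$ when $1<\gamma\le 5/3$, and one must also handle $5/3<\gamma<2$ where $p = 2/(\gamma-1) > 2$ — here I would need the sharper fact that the rescaled domains do not degenerate, i.e. that all three eigenvalues of the stretch tensor grow linearly). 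The key estimate to extract is $\|A''(t)\| = (\det A(t))^{1-\gamma}\|A(t)^{-1}\|$; combining $\det A(t)\gtrsim 1+|t|^{3}$ (or the relevant lower bound) with an upper bound on $\|A(t)^{-1}\|$ coming from boundedness of the smallest eigenvalue from below. One shows $\|A''(t)\|\lesssim t^{-\mu-2}$ for $\mu = 3\gamma - 4 > 0$, so Lemma \ref{cpatinf} applies and yields $A_\infty(t) = A_0 + tA_1$ with $A(t)\to A_\infty(t)$. It remains to verify $\det A_1 > 0$: from $A_1 = \lim_{t\to\infty} t^{-1}A(t)$ and $\det A(t)\sim t^3$ one gets $\det A_1 = \lim_{t\to\infty} t^{-3}\det A(t) > 0$, so $A_1\in\glt$ and $(A_1,A_0)\in\dd$. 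Then $\mathcal{W}_+(A_1,A_0) = (A(0),A'(0))$ by uniqueness in Theorem \ref{compasymst}, proving surjectivity. The delicate point is genuinely controlling $\|A(t)^{-1}\|$ — equivalently, ruling out that a principal axis of $\omt$ stays bounded or grows sublinearly — for the full range $4/3 < \gamma < 2$; the virial-type argument of Theorem \ref{compspread} gives the trace growth and the decay of potential energy $E_P(t) = (\gamma-1)^{-1}(\det A(t))^{1-\gamma}$, and I would combine the lower bound on $\det A(t)$ with the upper bound $\det A(t)\lesssim 1+|t|^3$ and $\tr A(t)A(t)^\top \sim 1+t^2$ to pin all three eigenvalues of $(A(t)A(t)^\top)^{1/2}$ between multiples of $t$, which is exactly what forces $\|A(t)^{-1}\|\lesssim t^{-1}$ and closes the argument.
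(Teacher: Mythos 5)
Your well-definedness and injectivity steps match the paper, and the overall skeleton of the surjectivity argument (apply Theorem \ref{compspread}, bound $\|A''(t)\|$, invoke Lemma \ref{cpatinf}, then argue $\det A_1>0$) is the right one. But the execution of surjectivity has a genuine gap, which you yourself flag as ``the delicate point'' — and the strategy you sketch there does not work.

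You propose to close the argument by pinning all three eigenvalues of $(A(t)A(t)^\top)^{1/2}$ between multiples of $t$, hence $\|A(t)^{-1}\|\lesssim t^{-1}$. From Theorem \ref{compspread} you have $\lambda_1\lambda_2\lambda_3=\det A(t)\gtrsim t^p$ and $\lambda_1^2+\lambda_2^2+\lambda_3^2\sim t^2$. The best lower bound these give for the smallest eigenvalue is $\lambda_1\gtrsim t^{p-2}$, which is linear only when $p=3$, i.e.\ $\gamma\le 5/3$. For $5/3<\gamma<2$ we only have $p=2/(\gamma-1)\in(2,3)$, and as $\gamma\to 2^-$ one loses linear growth of $\lambda_1$ entirely — so $\|A(t)^{-1}\|\lesssim t^{-1}$ is \emph{not} established on the whole stated range. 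Your decay exponent $\mu=3\gamma-4$ is also only correct for $\gamma\le 5/3$; for $\gamma>5/3$ the available bound is $\|N(A(t))\|\lesssim t^{2-p\gamma}$ with $p=2/(\gamma-1)$, which is strictly weaker. Finally, your deduction $\det A_1=\lim t^{-3}\det A(t)>0$ presupposes $\det A(t)\sim t^3$, which again is not known a priori for $\gamma>5/3$.

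The paper avoids all of this by never estimating eigenvalues individually. It writes $\|A(t)^{-1}\|=(\det A(t))^{-1}\|\cof A(t)\|\lesssim t^{2-p}$ and $\|N(A(t))\|\lesssim t^{2-p\gamma}$, observes that $4/3<\gamma<2$ forces $p>2$ and $2-p\gamma<-2$, and applies Lemma \ref{cpatinf}. Then, using $\|A(t)-A_\infty(t)\|\cdot\|A(t)^{-1}\|\lesssim t^{4-p\gamma+2-p}\to 0$ and continuity of the determinant, it gets $\det A_\infty(t)/\det A(t)\to 1$. Since $\det A(t)\gtrsim t^p$ with $p>2$ and $\det A_\infty(t)$ is a polynomial in $t$ of degree at most $3$, this forces that polynomial to be cubic, i.e.\ $\det A_1>0$. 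This determinant–polynomial argument substitutes for the eigenvalue pinning that you would need but cannot get; it is the step you should adopt.
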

\begin{proof}
The mapping ${\mathcal W}_+$ is well-defined for $\gamma>4/3$, by Theorem \ref{compasymst}.

If ${\mathcal W}_+(A_1,A_0)={\mathcal W}_+(\bar A_1,\bar A_0)$, then 
\begin{equation}
\lim_{t\to\infty}[(A_0+tA_1)-(\bar A_0+t\bar A_1)]=0.
\end{equation}
This implies that $(A_1,A_0)=(\bar A_1,\bar A_0)$, which proves that ${\mathcal W}_+$ is injective.

The theorem will follow if we can show that ${\mathcal W}_+$ is surjective for $\gamma<2$.
Let $(A(0),A'(0))\in\dd$ be arbitrary initial data, and let $A(t)$ be the corresponding global solution of \eqref{ODE}.
By Theorem \ref{compspread}, we have that $\det A(t)\gtrsim t^p$, with $p=3$, for $1<\gamma\le 5/3$, and $p=2/(\gamma-1)$,
for $\gamma\ge5/3$.  Since $\|A(t)\|\sim X(t)^{1/2}$,  Theorem \ref{compspread} says that $\|A(t)\|\sim t$.  It follows that
$\|\cof A(t)\|\lesssim t^2$.  Therefore, we obtain the estimates
\begin{gather}
\label{ainvub}
\|A(t)^{-1}\|=(\det A(t))^{-1}\|\cof A(t)\|\lesssim t^{2-p},\quad t\gg1\\
\intertext{and}
\label{nonlinub}
\|N(A(t))\|=(\det A(t))^{-\gamma}\|\cof A(t)\|\lesssim t^{2-p\gamma},\quad t\gg1.
\end{gather}
From the definition of the exponent $p$,   if $4/3<\gamma<2$, then   $p>2$ and   $2-p\gamma<-2$.
By \eqref{nonlinub} and Lemma \ref{cpatinf}, there exists a unique asymptotic state
$A_\infty(t)=A_0+tA_1$ such that
\begin{equation}
\|A(t)-A_\infty(t)\|\lesssim t^{4-p\gamma},\quad t\gg1.
\end{equation}
Using this and \eqref{ainvub}, we find  that
\begin{equation}
\|A_\infty(t)A(t)^{-1}-I\|\le \|A(t)-A_\infty(t)\|\;\|A(t)^{-1}\|\lesssim t^{4-p\gamma+2-p},
\end{equation}
and so
\begin{equation}
\lim_{t\to\infty}\|A_\infty(t)A(t)^{-1}-I\|=0.
\end{equation}
By continuity of the determinant, we obtain
\begin{equation}
\label{detrat}
\lim_{t\to\infty}\frac{\det A_\infty(t)}{\det A(t)}=\lim_{t\to\infty}\det A_\infty(t)A(t)^{-1}=1.
\end{equation}
Since $\det A(t)\gtrsim t^p$, $p>2$, we find that $\lim_{t\to\infty}t^{-2}\det A_\infty(t)=+\infty$.
It follows that $\det A_\infty(t)$ is cubic, and so, $\det A_1>0$.
Thus, $(A_1,A_0)\in\dd$ and ${\mathcal W}_+(A_1,A_0)=(A(0),A'(0))$.
  \end{proof}

\begin{remark}
Since the system \eqref{ODE} is time-reversible, we can define analogously a bijective operator $\mathcal W_-$ taking
asymptotic states at $-\infty$ to initial data.  Thus, we can construct a (bijective) scattering operator $\Sigma=\mathcal W_+\mathcal W_-^{-1}:\dd\to\dd$,
and we have asymptotic completeness.
\end{remark}

\begin{remark}
It follows from \eqref{detrat} that $\det A(t)\sim \det A_\infty(t)\sim t^3$, for $t\gg1$.
\end{remark}

\section{Asymptotic Behavior of Affine Impressible Swirling and Shear Flow}
\label{swirlsec}

\subsection{Incompressible Swirling Flow}
  We shall now impose the further symmetry 
of   swirling flow upon the system \eqref{IODE}.
The resulting dynamics are governed by equations \eqref{alphaeqn}, \eqref{betaeqn} for two scalar functions $\alpha$ and $\beta$ which measure the
strain and the rotation of the flow, respectively.

\begin{theorem}
Define
\begin{equation}
I_0=
\begin{bmatrix}
1&0\\0&1
\end{bmatrix},
\quad
W_0=
\begin{bmatrix}
0&1\\
-1&0
\end{bmatrix}.
\end{equation}
Let $(\alpha_0',\beta_0')\in\rr^2$ be nonzero.
The global solution 
$
A\in C(\rr,\slt)\cap C^\infty(\rr,\mm^3)
$
 of \eqref{IODE} with initial data
\begin{equation}
A(0)=I,\quad A'(0)=\alpha_0'
\begin{bmatrix}
I_0&\\&-2
\end{bmatrix}
+\beta_0'
\begin{bmatrix}
W_0&\\ 
&0
\end{bmatrix}
\end{equation}
has the block diagonal form
\begin{equation}
\label{block}
A(t)=
\begin{bmatrix}
\alpha(t)I_0&\\
&\alpha(t)^{-2}
\end{bmatrix}
\begin{bmatrix}
\exp(\beta(t)W_0)&\\
&1
\end{bmatrix},
\end{equation}
in which $\alpha(t),\beta(t)\in C^\infty(\rr)$, $\alpha(t)>0$, solve the system
  
\begin{align}
\label{alphaeqn}
&(1+2\alpha(t)^{-6})\left(\frac{\alpha'(t)}{\alpha(t)}\right)'\\
&\qquad+(1-4\alpha(t)^{-6})\left(\frac{\alpha'(t)}{\alpha(t)}\right)^2-(\beta'(t))^2=0,\\
\label{betaeqn}
&\beta''(t)+2\left(\frac{\alpha'(t)}{\alpha(t)}\right)\beta'(t)=0,
\end{align}
with the initial conditions
\begin{equation}
\label{abic}
\begin{aligned}
&\alpha(0)=1,&&\alpha'(0)=\alpha_0'\\
&\beta(0)=0,&&\beta'(0)=\beta_0'.
\end{aligned}
\end{equation}
  
The conserved energy is
\begin{multline}
\label{odeenergy}
e_0\equiv\frac12\tr A'(t)^\top A'(t)\\=(\alpha(t)^2+2\alpha(t)^{-4})\left(\frac{\alpha'(t)}{\alpha(t)}\right)^2+(\alpha(t) \beta'(t))^2\\
=3(\alpha_0')^2+(\beta_0')^2,
\end{multline}
and the curvature is
\begin{equation}
\label{cform}
\kappa(t)=\frac{3\left(\alpha'(t)/\alpha(t)\right)^2-(\beta'(t))^2}{e_0(2\alpha(t)^{-2}+\alpha(t)^{4})^{1/2}}.
\end{equation}

\end{theorem}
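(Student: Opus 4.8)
The plan is to verify the block-diagonal ansatz \eqref{block} by substitution into \eqref{IODE} and then to invoke uniqueness. First I would establish local solvability of the scalar system \eqref{alphaeqn}--\eqref{betaeqn} with data \eqref{abic}: for $\alpha>0$ the coefficient $1+2\alpha^{-6}$ of $(\alpha'/\alpha)'$ in \eqref{alphaeqn} is strictly positive, so \eqref{alphaeqn} can be solved for $\alpha''$; together with \eqref{betaeqn} this is a first-order system with $C^\infty$ right-hand side near any state with $\alpha>0$. Since $\alpha(0)=1$, Picard's theorem yields a unique $C^\infty$ solution $(\alpha,\beta)$, with $\alpha>0$, on a maximal interval $I\ni0$; the hypothesis $(\alpha_0',\beta_0')\ne0$ will enter only at the end, to guarantee $e_0>0$.

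Next, let $\tilde A(t)$ denote the matrix on the right-hand side of \eqref{block}. Then $\det\tilde A(t)=\alpha^2\cdot\alpha^{-2}=1$, so $\tilde A(t)\in\slt$, and from \eqref{abic} one reads off $\tilde A(0)=I$ and that $\tilde A'(0)$ equals the prescribed value of $A'(0)$; in particular $\tr\tilde A'(0)\tilde A(0)^{-1}=\tr\tilde A'(0)=0$, so the hypothesis of the global-existence lemma for \eqref{IODE} holds. The core claim is that $\tilde A$ solves \eqref{IODE}. Writing $\sigma=\alpha'/\alpha$ and using $W_0^\top=-W_0$, $W_0^2=-I_0$, a direct computation gives
\[
\tilde A'\tilde A^{-1}=\begin{bmatrix}\sigma I_0+\beta'W_0&\\&-2\sigma\end{bmatrix},\qquad
\tilde A^{-\top}\tilde A^{-1}=\begin{bmatrix}\alpha^{-2}I_0&\\&\alpha^4\end{bmatrix},
\]
whence $\tr\tilde A'\tilde A^{-1}=0$, $\tr(\tilde A'\tilde A^{-1})^2=6\sigma^2-2(\beta')^2$, $\tr\tilde A^{-\top}\tilde A^{-1}=2\alpha^{-2}+\alpha^4$, and therefore $\Lambda(\tilde A)=\bigl(6\sigma^2-2(\beta')^2\bigr)/\bigl(2\alpha^{-2}+\alpha^4\bigr)$.

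Differentiating $\tilde A'$ and exploiting the block structure (each of $\tilde A,\tilde A',\tilde A'',\tilde A^{-\top}$ is block-diagonal), the identity $\tilde A''=\Lambda(\tilde A)\tilde A^{-\top}$ reduces, by the linear independence of $I_0$ and $W_0$, to three scalar relations: $2\alpha'\beta'+\alpha\beta''=0$ (the $W_0$-part of the $2\times2$ block), which is \eqref{betaeqn}; $\alpha''-\alpha(\beta')^2=\Lambda(\tilde A)\alpha^{-1}$ (the $I_0$-part); and $6\alpha^{-4}(\alpha')^2-2\alpha^{-3}\alpha''=\Lambda(\tilde A)\alpha^2$ (the $(3,3)$ entry). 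Eliminating $\Lambda(\tilde A)$ between the last two and dividing by $\alpha$ reproduces \eqref{alphaeqn} (after substituting $(\alpha'/\alpha)'=\alpha''/\alpha-(\alpha'/\alpha)^2$); conversely, a solution $(\alpha,\beta)$ of \eqref{alphaeqn}--\eqref{betaeqn} makes the two expressions for $\Lambda$ obtained from the $I_0$- and $(3,3)$-relations agree, and one checks their common value is $\bigl(6\sigma^2-2(\beta')^2\bigr)/\bigl(2\alpha^{-2}+\alpha^4\bigr)=\Lambda(\tilde A)$, so all nine entries of \eqref{IODE} hold. By the uniqueness part of the lemma, $A(t)=\tilde A(t)$ on $I$. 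Since $A$ is global and $\tr A(t)A(t)^\top=2\alpha(t)^2+\alpha(t)^{-4}\lesssim1+t^2$ on bounded intervals (by \eqref{ienergyid}), $\alpha(t)$ stays in a compact subset of $(0,\infty)$ there; standard continuation forces $I=\rr$, so $\alpha,\beta\in C^\infty(\rr)$ and \eqref{block} holds for all $t$.

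It remains to record \eqref{odeenergy} and \eqref{cform}. Evaluating $\frac12\tr A'(t)^\top A'(t)$ in the block form gives $(\alpha^2+2\alpha^{-4})(\alpha'/\alpha)^2+(\alpha\beta')^2$; this is constant by \eqref{ienergyid}, and at $t=0$ it equals $3(\alpha_0')^2+(\beta_0')^2>0$ since $(\alpha_0',\beta_0')\ne0$. Finally \eqref{cform} follows from the curvature formula \eqref{curvaturedef} with $E_K(0)=e_0$, $\tr(A'A^{-1})^2=6\sigma^2-2(\beta')^2$, and $\tr A^{-\top}A^{-1}=2\alpha^{-2}+\alpha^4$. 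The only real work is the computation in the previous paragraph---differentiating the block matrix, isolating the scalar equations, and checking that the elimination of $\Lambda$ returns \eqref{alphaeqn} exactly; no analytic subtlety arises once the ansatz is written down.
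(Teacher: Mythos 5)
Your proposal is correct and follows essentially the same route as the paper: verify by a direct block-diagonal computation that the ansatz satisfies \eqref{IODE} whenever $(\alpha,\beta)$ solve \eqref{alphaeqn}--\eqref{betaeqn}, then invoke uniqueness of solutions to \eqref{IODE}, and finally read off \eqref{odeenergy} and \eqref{cform} from the block form. The paper organizes the verification by showing $A''(t)A(t)^\top=\Lambda(t)I$ for a scalar $\Lambda$ (after observing $\tr L'(t)=0$ forces $\Lambda$ to be the right coefficient), which is equivalent to your elimination of $\Lambda$ between the $I_0$ and $(3,3)$ scalar relations; and the paper leaves the global extension of $\alpha,\beta$ implicit where you spell out the continuation argument via boundedness of $\tr A(t)A(t)^\top$ on bounded intervals.
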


\begin{proof}
By uniqueness of solutions to \eqref{IODE}, it is enough to verify that \eqref{alphaeqn}, \eqref{betaeqn}, \eqref{abic} imply that
\eqref{block} defines a solution of \eqref{IODE}.
This will be done by direct computation.  

Note first that $\det A(t)=1$, by \eqref{block}.
Since
\begin{equation}
\label{apform}
A'(t)=\begin{bmatrix}
\alpha'(t) I_0+\alpha(t)\beta'(t)W_0&\\&-2\alpha(t)^{-3}\alpha'(t)
\end{bmatrix}
\begin{bmatrix}
\exp(\beta(t)W_0)&\\&1
\end{bmatrix},
\end{equation}
we have
\begin{equation}
\label{velgradode2.1}
L(t)=A'(t)A(t)^{-1}=\frac{\alpha'(t)}{\alpha(t)}
\begin{bmatrix}
I_0&\\&-2
\end{bmatrix}
+\beta'(t)
\begin{bmatrix}
W_0&\\&0
\end{bmatrix},
\end{equation}
and so $\tr L(t)=0$.  In particular, that  the initial data $(A(0),A'(0))$ satisfies the  necessary compatibility condition
$\tr A'(0)A(0)^{-1}=\tr L(0)=0$.

As in \eqref{ellode}, we have
\begin{equation}
\label{ellode2}
L'(t)=A''(t)A(t)^{-1}-L(t)^2.
\end{equation}
If $A''(t)=\Lambda(t)A(t)^{-\top}$, for some scalar function $\Lambda(t)$, then since $\tr L'(t)=0$, we would have
\begin{equation}
\Lambda(t)=\frac{\tr L(t)^2}{\tr (A(t)^{-\top}A(t)^{-1})}
\end{equation}
so that  \eqref{IODE} holds.
Thus, it is enough to show there exists a scalar function $\Lambda(t)$ such that
\begin{equation}
\label{todo}
A''(t)A(t)^\top=\Lambda(t)I.
\end{equation}

Using \eqref{ellode2}, \eqref{velgradode2.1} and then \eqref{betaeqn}, we have
\begin{align}
A''(t)A(t)^\top=&(L'(t)+L(t)^2)A(t)A(t)^\top\\=&\left\{\left(\frac{\alpha'(t)}{\alpha(t)}\right)'
\begin{bmatrix}
I_0&\\&-2
\end{bmatrix}
+\beta''(t)
\begin{bmatrix}
W_0&\\&0
\end{bmatrix}
+\left(\frac{\alpha'(t)}{\alpha(t)}\right)^2
\begin{bmatrix}
I_0&\\&4
\end{bmatrix}
\right.\\
&\left.
+2\left(\frac{\alpha'(t)}{\alpha(t)}\right)\beta'(t)
\begin{bmatrix}
W_0&\\&0
\end{bmatrix}+(\beta'(t))^2
\begin{bmatrix}
-I_0&\\&0
\end{bmatrix}
\right\}
\begin{bmatrix}
\alpha^2 I_0&\\&\alpha(t)^{-4}
\end{bmatrix}\\
=&\alpha(t)^2\left[\left(\frac{\alpha'(t)}{\alpha(t)}\right)'+\left(\frac{\alpha'(t)}{\alpha(t)}\right)^2-(\beta'(t))^2\right]
\begin{bmatrix}
I_0&\\
&0
\end{bmatrix}\\
&+\alpha(t)^{-4}\left[-2\left(\frac{\alpha'(t)}{\alpha(t)}\right)'+4\left(\frac{\alpha'(t)}{\alpha(t)}\right)^2\right]
\begin{bmatrix}
0&\\&1
\end{bmatrix}.
\end{align}
By \eqref{alphaeqn}, we find that \eqref{todo} holds with
\begin{equation}
\Lambda(t) =\frac{6(\alpha'(t)/\alpha(t))^2-2\beta'(t)^2}{2\alpha(t)^{-2}+\alpha(t)^4}.
\end{equation}

The formula \eqref{cform} for the curvature follows from the one for $\Lambda(t)$ using \eqref{curvaturedef}
and the fact that
\begin{equation}
\tr (A(t)^{-\top}A(t)^{-1}) = 2\alpha(t)^{-2}+\alpha(t)^4.
\end{equation}
The expression \eqref{odeenergy} for the energy follows from \eqref{apform}.
  \end{proof}

The next result provides the precise asymptotic behavior of these  solutions.
Thanks to \eqref{betaeqn}, it is possible to eliminate $\beta$ altogether.  This leaves us with a second order
ODE for $\alpha$ with a parameter $\beta'_0$.  Using the Hamiltonian structure, phase plane analysis allows for
a simple visualization.

\begin{theorem}
\label{incswirl}
Suppose that $\alpha,\beta\in C^\infty(\rr)$, $\alpha>0$, solve the initial value problem \eqref{alphaeqn}, \eqref{betaeqn}, \eqref{abic},
with $e_0=3(\alpha_0')^2+(\beta_0')^2\ne0$.  

If $\beta_0'\ne0$, then 

\begin{align}
 \label{betaform}
 &\beta'(t)=\beta_0'\alpha(t)^{-2},\\
\label{adpgz}
&\alpha''(t)>0,\\
\label{alphapasymp}
&0<e_0^{1/2}-\alpha'(t)\lesssim t^{-2},\quad t\gg1\\
 \label{alphaasymp}
&0<\alpha(t)-\bar\alpha(t)\lesssim t^{-1},\quad t\gg1,\\
&\phantom{0<\alpha(t)}\text{with}\quad
 \bar\alpha(t)=e_0^{1/2}t+1-\int_0^{\infty}(e_0^{1/2}-\alpha'(s))ds,\\
 \label{betaasymp}
&0<|\beta(t)-\bar\beta|\lesssim t^{-1},\quad t\gg1, \\
&\phantom{0<\alpha(t)}\text{with}\quad\bar\beta=\beta_0'\int_0^\infty \alpha(s)^{-2}ds.
\end{align}

There is a nonempty bounded  time interval $(t_1,t_2)\subset\rr$ such that
\begin{equation}
\label{curvatureasymp}
\begin{cases}
\kappa(t)<0,& t\in (t_1,t_2),\\
0<\kappa(t)\lesssim (1+|t|)^{-4},&t\in \rr\setminus [t_1,t_2].
\end{cases}
\end{equation}

The corresponding  solution $A(t)$ of \eqref{IODE} in the form \eqref{block} satisfies 
\begin{equation}
\label{mainincasymp}
A(t)= \begin{bmatrix}
[\bar\alpha(t)I_0-(\beta_0'/e_0^{1/2})W_0]\exp(\bar\beta W_0)&\\
&0
\end{bmatrix}
+\oo(t^{-2}),\quad t\gg1.
\end{equation}
\end{theorem}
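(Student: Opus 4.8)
The plan is to collapse \eqref{alphaeqn}--\eqref{betaeqn} onto a single autonomous scalar problem for $\alpha$, analyze it through its first integral, and then bootstrap the asymptotics.

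\emph{Step 1 (reduction and convexity).} Multiplying \eqref{betaeqn} by $\alpha^2$ shows $(\alpha^2\beta')'=\alpha^2\bigl(\beta''+2(\alpha'/\alpha)\beta'\bigr)=0$, so $\alpha^2\beta'$ is constant, equal to $\alpha(0)^2\beta'(0)=\beta_0'$; this is \eqref{betaform}. Substituting $\beta'=\beta_0'\alpha^{-2}$ into \eqref{odeenergy} turns it into the first integral
\[
e_0=(1+2\alpha^{-6})(\alpha')^2+(\beta_0')^2\alpha^{-2},
\]
and substituting it into \eqref{alphaeqn}, using $(\alpha'/\alpha)'=\alpha''/\alpha-(\alpha'/\alpha)^2$, yields after simplification
\[
(1+2\alpha^{-6})\,\alpha''=6\alpha^{-7}(\alpha')^2+(\beta_0')^2\alpha^{-3}.
\]
Since $\beta_0'\neq0$ the right side is strictly positive, so $\alpha''(t)>0$ for all $t$, which is \eqref{adpgz}. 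From the first integral we also read off $|\alpha'(t)|\le e_0^{1/2}$ and $\alpha(t)\ge|\beta_0'|/e_0^{1/2}>0$.

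\emph{Step 2 (phase portrait and first-order rates).} Strict convexity makes $\alpha'$ strictly increasing and bounded, hence convergent as $t\to\pm\infty$. If $\lim_{t\to\infty}\alpha'(t)\le0$, then $\alpha$ decreases strictly to a finite positive limit, forcing $\alpha'(t)\nearrow0$; but then the $\alpha''$-identity forces $\alpha''(t)$ toward a strictly positive limit, contradicting $\alpha'\le0$. Hence $\alpha'(t)>0$ for $t$ large, $\alpha(t)\to+\infty$, and letting $t\to\infty$ in the first integral (where $\alpha^{-6},\alpha^{-2}\to0$) gives $\alpha'(t)\nearrow e_0^{1/2}$; in particular $\alpha'(t)<e_0^{1/2}$ for all $t$ and $\alpha(t)\sim t$ for $t\gg1$. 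Applying the time-reversibility remark to $t\mapsto\alpha(-t)$ (whose data again has nonzero $\beta_0'$) gives $\alpha'(t)\searrow-e_0^{1/2}$, $\alpha(t)\to+\infty$ as $t\to-\infty$. Then $e_0^{1/2}-\alpha'(t)=\dfrac{e_0-(\alpha'(t))^2}{e_0^{1/2}+\alpha'(t)}\lesssim\alpha(t)^{-2}\lesssim t^{-2}$ for $t\gg1$, which is \eqref{alphapasymp}; since this is positive, $\bar\alpha(t)$ is well defined, and $\alpha(t)-\bar\alpha(t)=\int_t^\infty(e_0^{1/2}-\alpha'(s))\,ds$ is positive and $\lesssim t^{-1}$, which is \eqref{alphaasymp}. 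Likewise $\alpha(s)^{-2}\lesssim(1+s)^{-2}$ is integrable, so $\bar\beta$ is well defined and $\beta(t)-\bar\beta=-\beta_0'\int_t^\infty\alpha(s)^{-2}\,ds$ is nonzero with $|\beta(t)-\bar\beta|\lesssim t^{-1}$, which is \eqref{betaasymp}.

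\emph{Step 3 (curvature).} By \eqref{betaform}, the numerator of \eqref{cform} equals $\alpha^{-4}\bigl(3(\alpha\alpha')^2-(\beta_0')^2\bigr)$ and its denominator is positive, so $\kappa(t)$ has the sign of $3h(t)^2-(\beta_0')^2$ with $h:=\alpha\alpha'$. Now $h'=(\alpha')^2+\alpha\alpha''>0$ and, by Step 2, $h(t)\to\pm\infty$ as $t\to\pm\infty$, so $h$ is a strictly increasing continuous bijection of $\rr$ onto $\rr$; hence $\{\,t:3h(t)^2<(\beta_0')^2\,\}$ is the nonempty bounded interval $(t_1,t_2)$ with $h(t_1)=-|\beta_0'|/\sqrt3$, $h(t_2)=|\beta_0'|/\sqrt3$, on which $\kappa<0$, while $\kappa>0$ on $\rr\setminus[t_1,t_2]$. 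For the decay, $h(t)^2\sim e_0^2t^2$, $\alpha(t)\sim e_0^{1/2}|t|$ and $\tr A^{-\top}A^{-1}=2\alpha^{-2}+\alpha^4\sim\alpha^4$ as $|t|\to\infty$, so \eqref{cform} gives $\kappa(t)\sim 3e_0^{-2}|t|^{-4}$, whence $\kappa(t)\lesssim(1+|t|)^{-4}$ off $[t_1,t_2]$. This is \eqref{curvatureasymp}.

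\emph{Step 4 (matrix asymptotics; the main obstacle).} By \eqref{block}, $A(t)$ is block diagonal, with $(3,3)$-entry $\alpha(t)^{-2}=\oo(t^{-2})$ and $2\times2$ block $\alpha(t)\exp(\beta(t)W_0)=\bigl[\alpha(t)\exp(\rho(t)W_0)\bigr]\exp(\bar\beta W_0)$, $\rho:=\beta-\bar\beta$. Using $\exp(\rho W_0)=\cos\rho\,I_0+\sin\rho\,W_0$, the formula \eqref{mainincasymp} reduces to $\alpha\sin\rho+\beta_0'/e_0^{1/2}=\oo(t^{-2})$ and $\alpha\cos\rho-\bar\alpha=\oo(t^{-2})$. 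The first follows from $\sin\rho=\rho+\oo(t^{-3})$ together with $\alpha(t)\rho(t)=-\beta_0'\,\alpha(t)\!\int_t^\infty\!\alpha^{-2}=-\beta_0'/e_0^{1/2}+\oo(t^{-2})$, obtained by writing $\alpha^{-2}=e_0^{-1/2}\alpha'\alpha^{-2}+e_0^{-1/2}(e_0^{1/2}-\alpha')\alpha^{-2}$ (the first term integrating exactly to $e_0^{-1/2}/\alpha(t)$, the second being $\lesssim s^{-4}$). The second is the crux: $\cos\rho=1-\tfrac12\rho^2+\oo(t^{-4})$ gives $\alpha\cos\rho-\bar\alpha=(\alpha-\bar\alpha)-\tfrac12\alpha\rho^2+\oo(t^{-3})$, and one checks the cancellation $\alpha(t)-\bar\alpha(t)=\tfrac12\alpha(t)\rho(t)^2+\oo(t^{-2})$: both sides equal $\frac{(\beta_0')^2}{2e_0\,\alpha(t)}+\oo(t^{-2})$, the left because the first integral yields $e_0^{1/2}-\alpha'(s)=\frac{(\beta_0')^2}{2e_0^{1/2}\alpha(s)^2}+\oo(s^{-4})$ (then integrate, using $\int_t^\infty\alpha^{-2}=e_0^{-1/2}/\alpha(t)+\oo(t^{-3})$), and the right because $\rho(t)=-\frac{\beta_0'}{e_0^{1/2}\alpha(t)}+\oo(t^{-3})$ by the same computation. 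I expect this final cancellation to be the main obstacle: the $\oo(t^{-2})$ precision demanded by \eqref{mainincasymp} is one order sharper than the $\oo(t^{-1})$ rates in \eqref{alphaasymp}--\eqref{betaasymp}, and it survives only because the $t^{-1}$ correction of $\alpha$ is exactly cancelled by the quadratic term in the expansion of $\cos(\beta-\bar\beta)$ — extracting it requires the second-order asymptotics of $\alpha'$ and $\rho$ obtained from the first integral, not merely the crude bounds of Step 2.
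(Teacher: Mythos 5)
Your argument is correct and follows essentially the same line as the paper's proof: reduce $\beta$ via $\beta'=\beta_0'\alpha^{-2}$, extract the phase-plane behavior and the $t^{-2}$, $t^{-1}$ rates from the first integral $e_0=(1+2\alpha^{-6})(\alpha')^2+(\beta_0')^2\alpha^{-2}$, note that the curvature sign is governed by $3(\alpha\alpha')^2-(\beta_0')^2$, and finally refine the expansions of $\alpha$ and $\beta-\bar\beta$ to one more order to exhibit the cancellation between the $t^{-1}$ correction to $\alpha$ and the quadratic term of $\exp\bigl((\beta-\bar\beta)W_0\bigr)$. The two minor places where you diverge from the paper are cosmetically pleasant: you integrate $\alpha^{-2}$ exactly via $\alpha^{-2}=e_0^{-1/2}\alpha'\alpha^{-2}+e_0^{-1/2}(e_0^{1/2}-\alpha')\alpha^{-2}$ rather than passing through $\bar\alpha^{-2}$, and you decompose $\exp(\rho W_0)=\cos\rho\,I_0+\sin\rho\,W_0$ into two scalar estimates rather than doing the matrix Taylor expansion; both lead to the same cancellation computation at order $\bar\alpha^{-1}$ that the paper carries out.
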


\begin{proof}
 Observe that \eqref{betaform} follows immediately from \eqref{betaeqn}.  Thus,
 $(\beta'(t))^2>0$, for $t\in\rr$.

Substitute \eqref{betaform} in \eqref{alphaeqn}    to see that

\begin{equation}
\label{alphadpeqn}
\alpha''=
(1+2\alpha^{-6})^{-1}[6\alpha^{-7}(\alpha')^2+(\beta'_0)^2\alpha^{-3}].
\end{equation}
The right-hand side is strictly positive since $\alpha>0$ and $(\beta_0')^2>0$,
confirming \eqref{adpgz}.

Using \eqref{betaform} in \eqref{odeenergy}, we obtain the conserved energy
\begin{equation}
\label{odeenergyalpha}
(1+2\alpha^{-6})({\alpha'})^2+( \beta'_0)^2\alpha^{-2}=e_0>0,
\end{equation}

from which it follows that  $\min \alpha = |\beta'_0|/e_0^{1/2}$.
Since $\alpha''>0$, there can be no equilibrium solutions.
Therefore, the trajectory
$(\alpha(t),\alpha'(t))$, $t\in\rr$, traces the entire energy level curve \eqref{odeenergyalpha} in the direction of increasing $\alpha'$,
as  depicted  in the phase diagram in Figure \ref{phasediagram1}.

Using \eqref{betaform} to eliminate $\beta$ in \eqref{cform}, we see that the sign of the curvature
is determined by the sign of the quantity $3(\alpha'\alpha)^2-(\beta_0')^2$.  Since $\beta_0'\ne0$,  a bounded,
 connected portion of the solution trajectory necessarily lies in a region where $\kappa<0$, as
illustrated in Figure \ref{phasediagram1}.  This establishes the existence of the time interval claimed in \eqref{curvatureasymp}.

%%%%%%%%%
\begin{figure}
\label{phasediagram1}
\caption{Phase diagram in the case $\beta_0'=1/2$ showing   the energy level set $e_0=1$ and
 the regions of positive and negative curvature for $A(t)$.}
 \ \\
 \ \\
\setlength\unitlength{1mm}
\begin{center}
\begin{overpic}[scale=.5]%,grid,tics=5]
{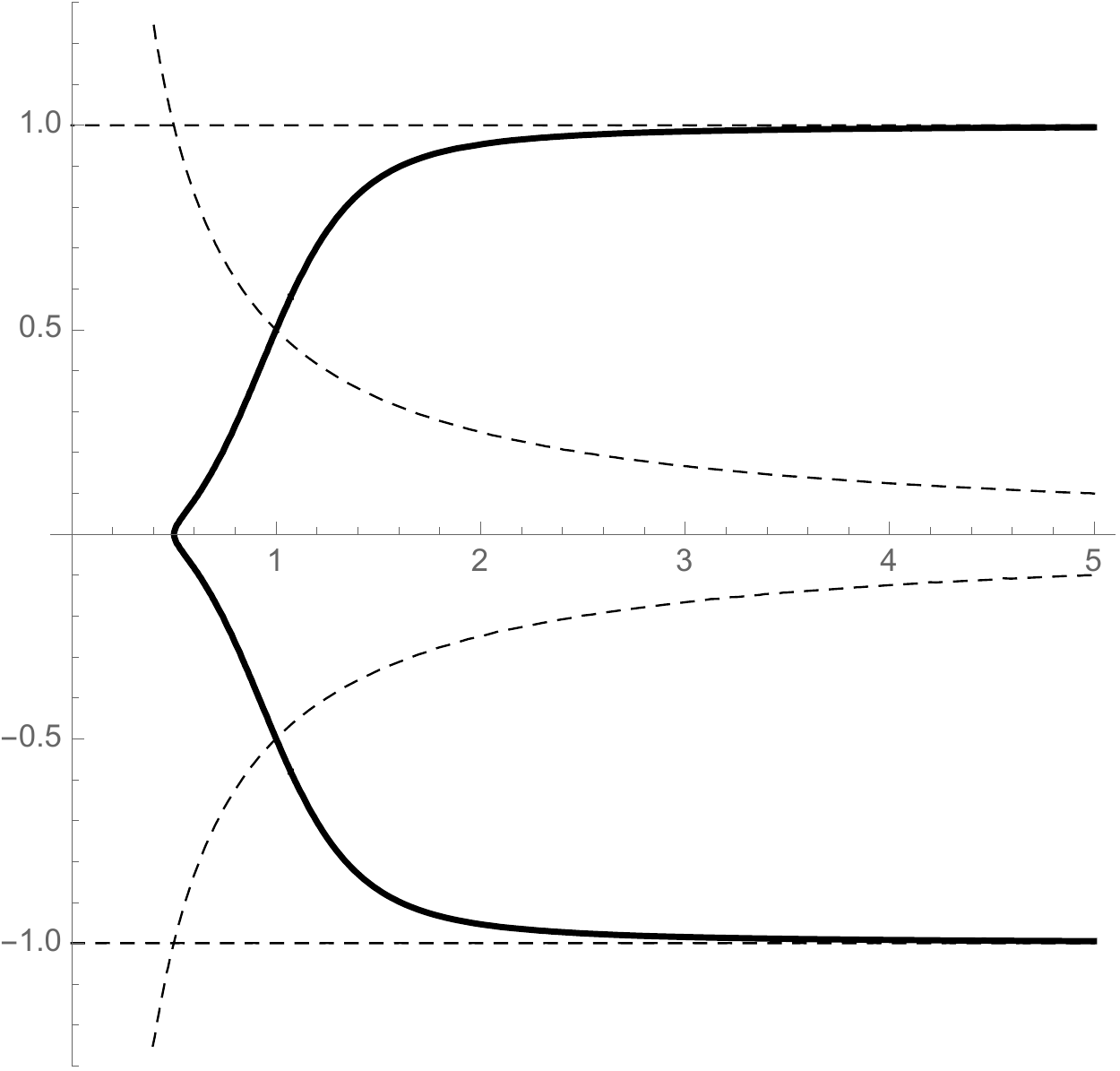}
\put(65,69){$\kappa>0$}
\put(65,26){$\kappa>0$}
\put(27,51){$\kappa<0$}
\put(102,47){$\alpha$}
\put(5.2,98){$\alpha'$}
\thicklines
\put(75,11.8){\vector(-1,0){3}}
\put(72,84.2){\vector(1,0){3}}
\end{overpic}
\end{center}
\end{figure}
%%%%%%%%%%

Next, we consider the asymptotic behavior  as $t\to+\infty$.
The  behavior as $t\to-\infty$ is similar and can be obtained simply by reversing time.
More precisely, let $t_0\in\rr$ be the unique time such that $\alpha'(t_0)=0$.
Since the equation \eqref{alphadpeqn} is time reversible, we have that $\alpha(t-t_0)=\alpha(t_0-t)$
for all $t\in\rr$.

Since $(\alpha(t),\alpha'(t))\to(\infty,e^{1/2})$, as $t\to\infty$, we have that $\alpha'(t)> e_0^{1/2}/2$ for $t\gg1$,
and thus 

\begin{equation}
\label{alphalb}
\alpha(t)> e_0^{1/2}t/2,\quad t\gg1. 
\end{equation}
From \eqref{odeenergyalpha}, we obtain
\begin{equation}
\label{odeenergyalpha1}
\alpha'(t)=\psi(\alpha(t)),\quad \psi(\alpha)\equiv \left(\frac{e_0-(\beta'_0)^2\alpha^{-2}}{1+2\alpha^{-6}}\right)^{1/2},\quad t\gg1.
\end{equation}
Perform an expansion of $\psi(\alpha)$  for $\alpha\gg1$:
\begin{equation}
\label{psiexpan}
\psi(\alpha)=e_0^{1/2}-\frac{(\beta_0')^2}{2e_0^{1/2}\alpha^2}+\oo(\alpha^{-4}),\quad \alpha\gg1.
\end{equation}
By \eqref{alphalb}, \eqref{odeenergyalpha1}, \eqref{psiexpan},  we have
\begin{equation}
\label{asymp1}
0<e_0^{1/2}-\alpha'(t)=e_0^{1/2}-\psi(\alpha(t))\lesssim t^{-2},\quad t\gg1.
\end{equation}
which proves \eqref{alphapasymp}.
Integration   of \eqref{asymp1} over an interval of the form $[t,\infty)$, $t\gg1$, shows that 
\begin{equation}
0<\int_t^\infty(e_0^{1/2}-\alpha'(s))ds\lesssim t^{-1}.
\end{equation}
The estimate \eqref{alphaasymp} follows from this since 
\begin{align}
\label{alphadecomp}
\int_t^\infty(e_0^{1/2}&-\alpha'(s))ds
\\&=\int_0^\infty(e_0^{1/2}-\alpha'(s))ds-\int_0^t(e_0^{1/2}-\alpha'(s))ds\\
\\&=\alpha(t)-\bar\alpha(t).
\end{align}

We obtain \eqref{betaasymp} in the same way.
By \eqref{alphalb}, we have
\begin{equation}
0<\int_t^\infty\alpha(s)^{-2}ds\lesssim t^{-1},\quad t\gg1,
\end{equation}
and by \eqref{betaform},
\begin{equation}
\label{detadecomp}
\beta_0'\int_t^\infty\alpha(s)^{-2}ds=\bar\beta
-\int_0^t \beta'(s)ds=\bar\beta-\beta(t).
\end{equation}

The upper bound in \eqref{curvatureasymp} for the curvature follows from \eqref{cform}, the bounds \eqref{alphapasymp}, \eqref{alphaasymp}, \eqref{betaasymp},
and their analogs for $t\ll-1$.

It remains to verify the estimate \eqref{mainincasymp}, which is equivalent to showing that

\begin{equation}
\label{maininc1}
\alpha(t)\exp(\beta(t)W_0)=[\bar\alpha(t)I-(\beta_0'/e_0^{1/2})W_0]\exp(\bar\beta W_0)
+\oo( t^{-2}),
\end{equation}
and
\begin{equation}
\label{maininc2}
\alpha(t)^{-2}=\oo( t^{-2}),
\end{equation}

for $ t\gg1$.  Of course, \eqref{maininc2} is a consequence of \eqref{alphalb}.  To prove \eqref{maininc1},
we will need to refine our asymptotic formulas slightly.

To this end, let us write
\begin{equation}
\alpha(t)=\bar\alpha(t)+\dot\alpha(t)\quad \text{and}
\quad \beta(t)=\bar\beta +\dot\beta(t).
\end{equation}
(The notation $\dot\alpha$, $\dot{\beta}$  denotes a perturbation and not a derivative here.)
By \eqref{alphadecomp}, \eqref{asymp1}, \eqref{psiexpan}, and \eqref{alphalb}, we have that

\begin{gather}
\label{alphadot}
\dot\alpha(t)=(\beta_0')^2/(2e_0^{1/2})\int_t^\infty \alpha^{-2}(s)ds+\oo(t^{-3}),\quad t\gg1,\\
\intertext{and by \eqref{detadecomp}}
\label{betadot}
\dot\beta(t)=- \beta_0'\int_t^\infty\alpha^{-2}(s)ds,\quad t\gg1.
\end{gather}

Now by \eqref{alphaasymp},  we have that $\dot\alpha(t)\lesssim t^{-1}$.  Therefore,
for $t\gg1$, it follows that
\begin{equation}
\label{alpham2}
\alpha(t)^{-2}=\bar\alpha(t)^{-2}(1+\dot\alpha(t)/\bar\alpha(t))^{-2}
 =\bar\alpha(t)^{-2}+\oo(t^{-4}).
\end{equation}
Cycling this estimate into \eqref{alphadot} and \eqref{betadot}, we conclude that
\begin{gather}
\label{alphadot2}
\dot\alpha(t)=(\beta_0')^2/(2e_0\bar\alpha(t))+\oo(t^{-3}),\quad t\gg1,\\
\intertext{and}
\label{betadot2}
\dot\beta(t)=-\beta_0'/(e_0^{1/2}\bar\alpha(t))+\oo(t^{-3}),\quad t\gg1.
\end{gather}

These estimates 
imply that
  
\begin{align}
\label{alphaexpansion}
&\alpha(t)=\bar\alpha(t)+(\beta_0')^2/(2e_0\bar\alpha(t))+\oo(t^{-3}),\quad t\gg1\\
\label{betaexpansion}
&\beta(t)=\bar\beta-\beta_0'/(e_0^{1/2}\bar\alpha(t))+\oo(t^{-3}),\quad t\gg1.
\end{align}
By \eqref{betaasymp}, we have $|\dot\beta(t)|\lesssim t^{-1}$, and so using \eqref{betadot2}, we obtain
\begin{align}
\exp(\beta(t)W_0)&=\exp(\bar\beta W_0)\exp(\dot\beta(t)W_0)\\
&=\exp(\bar\beta W_0)(I_0+\dot\beta(t)W_0+\textstyle{\frac12}\dot\beta(t)^2W_0^2+\oo(t^{-3}))\\
\label{expbetaexpansion}
&=\exp(\bar\beta W_0)(I_0+\dot\beta(t)W_0-\textstyle{\frac12}\dot\beta(t)^2I_0)+\oo(t^{-3})\\
&=\exp(\bar\beta W_0) (I_0  -\beta_0'/(e_0^{1/2}\bar\alpha(t))W_0-\textstyle{\frac12} (\beta_0')^2/(e_0\bar\alpha(t)^2)I_0)\\
&\phantom{=\exp(\bar\beta W_0) (I_0  -\beta_0'/(e_0^{1/2}\bar\alpha(t))W_0-\textstyle{\frac12} (\beta_0')^2/(e_0\alpha}  
+\oo(t^{-3}).
\end{align}
  
We are now ready to prove \eqref{maininc1}.  Since the error is $\oo(t^{-2})$,
we only need to keep track of the terms
of order $\bar\alpha(t)^k$, for $k=1,0,-1$.  Multiplication of the
expansions \eqref{alphaexpansion} and \eqref{expbetaexpansion} yields \eqref{maininc1},
because the terms with $\bar\alpha(t)^{-1}$ drop out.
This completes the proof of \eqref{mainincasymp}.
  \end{proof}

\begin{remark}
The existence of an asymptotic state $A_\infty(t)$ and the decay rate $t^{-2}$ in \eqref{mainincasymp}
could also be deduced by applying Lemma \ref{cpatinf} to \eqref{IODE} and using \eqref{curvatureasymp}.
However, the argument of Theorem \ref{incswirl} also provides the  form  of $A_\infty(t)$.
These asymptotic estimates can be continued to include further terms.
\end{remark}  

\begin{remark}

 The rescaled fluid domain collapses to a circular pancake as $t\to\infty$, that is,
$\barominf=\{(x_1,x_2,0)\in\rr^3: x_1^2+x_2^2\le e_0\}$.
\end{remark}

\begin{remark}
The vorticity  is given by  the operator
\begin{equation}
\frac12\omega(t,x)\times=\frac12\omega(t)\times=\frac12(L(t)-L(t)^\top)=\beta'(t)
\begin{bmatrix}
W_0&\\&0
\end{bmatrix}=\oo(t^{-2}).
\end{equation}
\end{remark}

\subsection{Irrotational Incompressible Swirling Flow}

We now consider the case where the  parameter $\beta'_0$ vanishes.  This  eliminates the vorticity and significantly changes the
character of the phase diagram for $(\alpha,\alpha')$.

\begin{theorem}
\label{incirrot}
Suppose that $\alpha,\beta\in C^\infty(\rr)$, $\alpha>0$, solve the initial value problem \eqref{alphaeqn}, \eqref{betaeqn}, \eqref{abic}.

If $\alpha_0'<0$ and $\beta_0'=0$, then 

\begin{align}
\label{betavan}
&\beta(t)\equiv0,\\
\label{adpgz2}
&\alpha''(t)>0,\\
\label{alphapdnt}
&0<\alpha'(t)+e_0^{1/2}\lesssim (1+|t|)^{-6},\quad t<0, \\
 \label{alphantasymp}
&0<\alpha(t)-e_0^{1/2}|t|-\alpha_{-\infty}\lesssim (1+|t|)^{-5},\quad t<0,\\
&\phantom{0<\alpha(t)}\text{with}\quad
 \alpha_{-\infty}=1-\int_{-\infty}^0(\alpha'(s)+e_0^{1/2})ds,\\
 \label{alphamtprimeasymp}
&0<(\alpha(t)^{-2})'-(2e_0)^{1/2}\lesssim (1+t)^{-3},\quad t>0,\\
 \label{alphamtasymp}
&0<\alpha(t)^{-2}-(2e_0)^{1/2}t-\alpha_{\infty}\lesssim (1+t)^{-2},\quad t>0,\\
&\phantom{0<\alpha(t)}\text{with}\quad
 \alpha_{\infty}=1-\int_0^{\infty}[(\alpha(s)^{-2})'-(2e_0)^{1/2}]ds.
 \end{align}

Finally, the curvature is everywhere positive and
\begin{equation}
\label{}
\begin{cases}
0<\kappa(t)\lesssim (1+|t|)^{-4},& t<0,\\
0<\kappa(t)\lesssim (1+t)^{-5/2},&t>0.
\end{cases}
\end{equation}

By time reversal, corresponding statements hold when $\alpha_0'>0$.
\end{theorem}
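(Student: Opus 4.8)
The plan is to use the energy identity \eqref{odeenergy} to reduce the system to a single first-order autonomous ODE for $\alpha$, and then read off every asymptotic statement by elementary integration. Since $\beta(0)=\beta'(0)=\beta_0'=0$, uniqueness for the linear equation \eqref{betaeqn} forces $\beta\equiv0$, which is \eqref{betavan}. Putting $\beta'\equiv0$ in \eqref{alphaeqn} and \eqref{odeenergy} (equivalently, $\beta_0'=0$ in \eqref{alphadpeqn} and \eqref{odeenergyalpha}) gives
\begin{equation*}
\alpha''=\frac{6\alpha^{-7}(\alpha')^2}{1+2\alpha^{-6}}\ge0,\qquad (1+2\alpha^{-6})(\alpha')^2=e_0=3(\alpha_0')^2>0 .
\end{equation*}
The conservation law shows $\alpha'$ never vanishes, so $\alpha'(t)<0$ for all $t$ (by continuity, as $\alpha_0'<0$); hence $\alpha''>0$, which is \eqref{adpgz2}, and $\alpha$ is strictly decreasing with $\alpha(0)=1$ and $\alpha'(t)=-[e_0/(1+2\alpha(t)^{-6})]^{1/2}$. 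Monotonicity together with the conservation law (which forbids a finite positive limit for $\alpha$ in either direction) gives $\alpha(t)\to+\infty$ as $t\to-\infty$ and $\alpha(t)\to0^{+}$ as $t\to+\infty$; the trajectory traces the energy level curve monotonically, a degenerate version of the phase portrait used for Theorem \ref{incswirl}.

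For $t<0$ one has $\alpha(t)\ge\alpha(0)=1$, hence $|\alpha'(t)|\ge(e_0/3)^{1/2}$, and integrating from $t$ to $0$ gives $\alpha(t)\ge1+(e_0/3)^{1/2}|t|\gtrsim1+|t|$. Then
\begin{equation*}
\alpha'(t)+e_0^{1/2}=e_0^{1/2}\bigl[1-(1+2\alpha(t)^{-6})^{-1/2}\bigr]\in\bigl(0,\,e_0^{1/2}\alpha(t)^{-6}\bigr]\subset\bigl(0,\,C(1+|t|)^{-6}\bigr],
\end{equation*}
which is \eqref{alphapdnt}; integrating this positive, integrable quantity over $(-\infty,t]$ yields \eqref{alphantasymp}, with $\alpha_{-\infty}=\lim_{t\to-\infty}(\alpha(t)+e_0^{1/2}t)$ given by the stated formula.

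For $t>0$ I would change variables to $g(t)=\alpha(t)^{-2}$, which satisfies $g(0)=1$ and $g'(t)=2[e_0g^3/(1+2g^3)]^{1/2}$; the right-hand side is strictly increasing in $g$ and tends to $(2e_0)^{1/2}$ as $g\to\infty$, so for $t>0$ one has $2(e_0/3)^{1/2}<g'(t)<(2e_0)^{1/2}$, whence $\alpha(t)^{-2}=g(t)\sim1+t$. Since $g'=(2e_0)^{1/2}[1-(1+2g^{3})^{-1}]^{1/2}$, the elementary bound $1-\sqrt{1-x}\le x$ gives $|(2e_0)^{1/2}-g'(t)|\lesssim(1+2g(t)^3)^{-1}\lesssim(1+t)^{-3}$, which is the decay in \eqref{alphamtprimeasymp}; integrating over $[t,\infty)$ gives \eqref{alphamtasymp} with $\alpha_\infty=\lim_{t\to\infty}(\alpha(t)^{-2}-(2e_0)^{1/2}t)$. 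For the curvature, $\beta'\equiv0$ reduces \eqref{cform} to $\kappa(t)=3(\alpha'(t)/\alpha(t))^2/[e_0(2\alpha(t)^{-2}+\alpha(t)^4)^{1/2}]$, which is manifestly positive; for $t<0$ one uses $(\alpha')^2\le e_0$ together with $\alpha^2,(2\alpha^{-2}+\alpha^4)^{1/2}\gtrsim(1+|t|)^2$ to get $\kappa\lesssim(1+|t|)^{-4}$, and for $t>0$ one uses $(\alpha')^2=e_0/(1+2\alpha^{-6})\lesssim\alpha^6\lesssim(1+t)^{-3}$, $\alpha^2\sim(1+t)^{-1}$, and $(2\alpha^{-2}+\alpha^4)^{1/2}\gtrsim\alpha^{-1}\gtrsim(1+t)^{1/2}$ to get $\kappa\lesssim(1+t)^{-5/2}$.

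There is no genuinely hard step here: energy conservation makes the $\alpha$-equation first-order and essentially explicit, and the theorem is careful bookkeeping of powers of $t$. The point requiring attention is that the weight $1+2\alpha^{-6}$ degenerates oppositely on the two time rays — it tends to $1$ as $\alpha\to\infty$ ($t\to-\infty$) but grows like $2\alpha^{-6}$ as $\alpha\to0$ ($t\to+\infty$) — which is exactly why $\alpha$ grows linearly in $|t|$ as $t\to-\infty$ while $\alpha^{-2}$ grows linearly in $t$ as $t\to+\infty$, and hence why the curvature decays at the two different rates; geometrically this is the collapse along one axis versus two axes, i.e.\ the structural instability discussed in the introduction. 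Finally, the statements for $\alpha_0'>0$ need no new argument: \eqref{alphadpeqn} depends on $t$ only through $(\alpha')^2$, so $\tilde\alpha(t)=\alpha(-t)$ solves the same equation with $\tilde\alpha'(0)=-\alpha_0'>0$, and the corresponding asymptotics follow by interchanging the roles of $t\to+\infty$ and $t\to-\infty$.
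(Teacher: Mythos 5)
Your proof is correct and follows essentially the same route as the paper: conservation of energy with $\beta\equiv 0$ reduces \eqref{alphaeqn} to a first-order ODE whose level set has two branches, linear growth of $\alpha$ as $t\to-\infty$ and of $\alpha^{-2}$ as $t\to+\infty$ follow by integration, and the decay rates and curvature bounds are read off by elementary expansion. Your argument is in fact slightly more careful than the paper's in two spots: you explicitly invoke energy conservation to rule out $\alpha'=0$ before concluding $\alpha''>0$ from \eqref{alphadpeqn}, and your clean inequalities $1-(1+x)^{-1/2}\le x/2$ and $1-\sqrt{1-x}\le x$ avoid a small algebraic slip in the paper's exact expression for $\alpha'(t)+e_0^{1/2}$ (which in any case does not affect the $\oo(\alpha^{-6})$ decay rate).
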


\begin{proof}
The vanishing of $\beta(t)$, \eqref{betavan}, follows from \eqref{betaeqn}, since $\beta(0)=\beta'(0)=0$.
The positivity of $\alpha''(t)$, \eqref{adpgz2}, follows from \eqref{alphaeqn}, as shown in \eqref{alphadpeqn},
and thus $\alpha'(t)$ is again strictly increasing.

The vanishing of $\beta$ in \eqref{odeenergy} implies that 
\begin{equation}
\label{odeenergybpz}
\alpha'(t)=\pm\left(\frac{e_0}{1+2\alpha(t)^{-6}}\right)^{1/2}.
\end{equation}
Thus, the energy level curve now has two distinct components, as illustrated in
Figure \ref{phasediagram2}.  By assumption $\alpha_0'<0$, so we have $(\alpha(t),\alpha'(t))\to(\infty,-e_0^{1/2})$, 
as $t\to -\infty$, as before,
but now $(\alpha(t),\alpha'(t))\to(0,0)$, as $t\to\infty$.

%%%%%%%%%
\begin{figure}
\label{phasediagram2}
\caption{Phase diagram in the case  $\beta_0'=0$ showing the two components of the energy level set $e_0=1$.}
\ \\
\ \\
\setlength\unitlength{1mm}
\begin{center}
\begin{overpic}[scale=.5]%,grid,tics=5]
{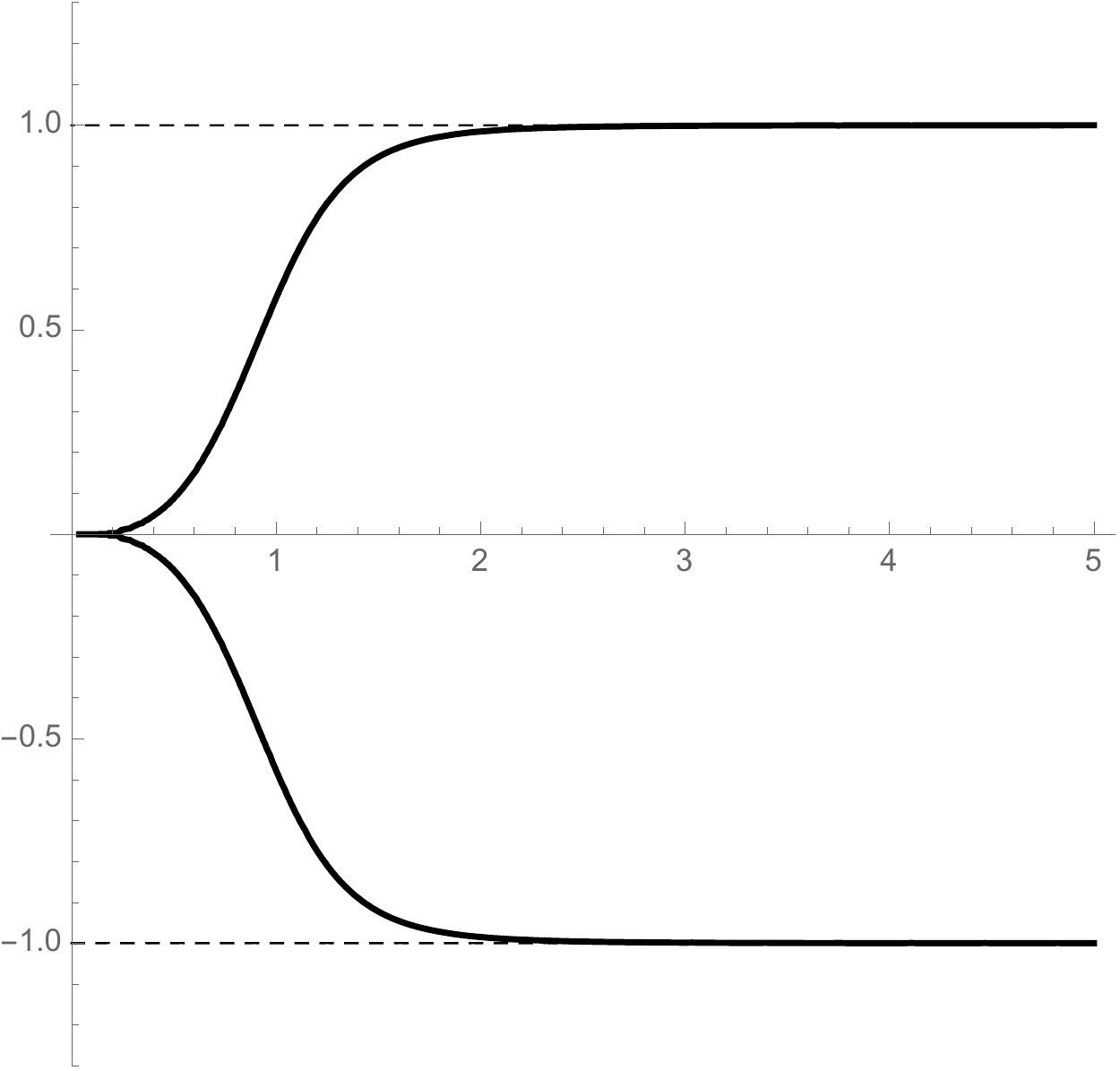}
\put(102,47){$\alpha$}
\put(5.2,98){$\alpha'$}
\put(6.2,48){\circle{2}}
\thicklines
\put(60,11.5){\vector(-1,0){3}}
\put(57,84.4){\vector(1,0){3}}
\end{overpic}
\end{center}
\end{figure}
%%%%%%%%

Since $\alpha'(t)\downarrow-e_0^{1/2}$, as $t\to-\infty$, we have that 
\begin{equation}
\label{alphalbnt}
\alpha(t)\gtrsim (1+|t|),\quad\text{for }\quad t<0.
\end{equation}
Since $\alpha'_0<0$, we are on the lower branch of the energy curve \eqref{odeenergybpz},
 so
\begin{equation}
\alpha'(t)+e_0^{1/2}=-\left(\frac{e_0}{1+2\alpha(t)^{-6}}\right)^{1/2}+e_0^{1/2}=\frac{2e_0^{1/2}}{2+\alpha(t)^6}.
\end{equation}
Combining this with \eqref{alphalbnt}, we have shown \eqref{alphapdnt}.
The bound \eqref{alphantasymp} follows from \eqref{alphapdnt} by integration.

Next, we focus on the behavior for large positive times.  From \eqref{odeenergybpz}, we have
\begin{equation}
\label{alphamtp}
(\alpha(t)^{-2})'=-2\alpha(t)^{-3}\alpha'(t)=\left(\frac{2e_0}{1+\alpha(t)^6/2}\right)^{1/2}\gtrsim1,\quad t>0,
\end{equation}
since $\alpha(t)\downarrow0$, as $t\to\infty$.  From \eqref{alphamtp}  we obtain the bound
\begin{equation}
\label{alphadecay}
\alpha(t)\lesssim (1+t)^{-1/2},\quad t>0.
\end{equation}
Again by \eqref{alphamtp}, we have
\begin{equation}
0<(2e_0)^{1/2}-(\alpha(t)^{-2})'=\alpha(t)^6\psi(\alpha(t)),
\end{equation}
where $\psi(\alpha(t))\to (e_0)^{1/2}/4>0$, as $\to\infty$.
Together with \eqref{alphadecay}, this proves the estimate \eqref{alphamtprimeasymp}.
As above, \eqref{alphamtasymp} follows directly  from \eqref{alphamtprimeasymp}.

The statements about the curvature follow from the formula \eqref{cform}, with $\beta=0$,
and the estimates \eqref{alphalbnt}, \eqref{alphadecay}.
  \end{proof}

\begin{remark}
For irrotational swirling flow with $\alpha_0'<0$, the asymptotic fluid domains are
\begin{equation}
\overline{\Omega}_\infty=\{(0,0,x_3)\in\rr^3:x_3^2< 2e_0\}
\end{equation}
and
\begin{equation}
\overline{\Omega}_{-\infty}=\{(x_1,x_2,0)\in\rr^3:x_1^2+x_2^2< e_0\}.
\end{equation}
\end{remark}
\subsection{Shear Flow}

Suppose that $M\in\mm^3$ is nilpotent, so that $M^3=0$.  Define $A(t)=I+tM$.  
Since  the eigenvalues of $M$ vanish, the eigenvalues of $A(t)$ are equal to unity, for all $t\in\rr$.
It follows that $\det A(t)=1$ and so, $A\in C(\rr,\slt)\cap C^\infty(\rr,\mm^3)$.
Since $A(t)$ is a line in $\mm^3$, its curvature vanishes.  This can also be verified directly from
the formula \eqref{curvaturedef} for $\kappa(t)$.  Since $A''(t)=0$ and $\kappa(t)=0$,
for all $t\in\rr$, we see that $A(t)$ is a solution of \eqref{IODE} whose corresponding pressure vanishes
identically.
More generally, we could take $A(t)=(I+tM)A_0$, for an arbitrary element $A_0\in\slt$.

Consider, for example,  $M=e_2\otimes e_1$, whence $A(t)$ gives rise to a classical shear flow.
In this case, the rescaled asymptotic fluid domain is a line segment \[\barominf=\{x=(x_1,0,0):|x_1|<1\}.\]

Or, if $M=e_2\otimes e_1 + e_3\otimes e_2$, then the rescaled limit is a disk
\[\barominf=\{x=(x_1,x_2,0):x_1^2+x_2^2<1\}.\]

\bibliography{arXiv-02-26-17}

\begin{thebibliography}{10}

\bibitem{Arnold-1966}
V.~Arnold.
\newblock Sur la g\'eom\'etrie diff\'erentielle des groupes de {L}ie de
  dimension infinie et ses applications \`a l'hydrodynamique des fluides
  parfaits.
\newblock {\em Ann. Inst. Fourier (Grenoble)}, 16(fasc. 1):319--361, 1966.

\bibitem{Chemin}
Jean-Yves Chemin.
\newblock Dynamique des gaz \`a masse totale finie.
\newblock {\em Asymptotic Anal.}, 3(3):215--220, 1990.

\bibitem{Christodoulou-2009}
Demetrios Christodoulou.
\newblock {\em The formation of black holes in general relativity}.
\newblock EMS Monographs in Mathematics. European Mathematical Society (EMS),
  Z\"urich, 2009.

\bibitem{Christodoulou-Lindblad}
Demetrios Christodoulou and Hans Lindblad.
\newblock On the motion of the free surface of a liquid.
\newblock {\em Comm. Pure Appl. Math.}, 53(12):1536--1602, 2000.

\bibitem{Coutand-Shkoller-Lindblad-2010}
Daniel Coutand, Hans Lindblad, and Steve Shkoller.
\newblock A priori estimates for the free-boundary 3{D} compressible {E}uler
  equations in physical vacuum.
\newblock {\em Comm. Math. Phys.}, 296(2):559--587, 2010.

\bibitem{Coutand-Shkoller-2007}
Daniel Coutand and Steve Shkoller.
\newblock Well-posedness of the free-surface incompressible {E}uler equations
  with or without surface tension.
\newblock {\em J. Amer. Math. Soc.}, 20(3):829--930, 2007.

\bibitem{Coutand-Shkoller-2010}
Daniel Coutand and Steve Shkoller.
\newblock A simple proof of well-posedness for the free-surface incompressible
  {E}uler equations.
\newblock {\em Discrete Contin. Dyn. Syst. Ser. S}, 3(3):429--449, 2010.

\bibitem{Coutand-Shkoller-2012}
Daniel Coutand and Steve Shkoller.
\newblock Well-posedness in smooth function spaces for the moving-boundary
  three-dimensional compressible {E}uler equations in physical vacuum.
\newblock {\em Arch. Ration. Mech. Anal.}, 206(2):515--616, 2012.

\bibitem{Grassin-1998}
Magali Grassin.
\newblock Global smooth solutions to {E}uler equations for a perfect gas.
\newblock {\em Indiana Univ. Math. J.}, 47(4):1397--1432, 1998.

\bibitem{Hadzic-Jang}
Mahir Had\v{z}i\'{c} and Juhi Jang.
\newblock Expanding large global solutions of the equations of compressible
  fluid mechanics.
\newblock 2016.

\bibitem{Jang-Masmoudi-2012}
Juhi Jang and Nader Masmoudi.
\newblock Well and ill-posedness for compressible {E}uler equations with
  vacuum.
\newblock {\em J. Math. Phys.}, 53(11):115625, 11, 2012.

\bibitem{Jang-Masmoudi-2015}
Juhi Jang and Nader Masmoudi.
\newblock Well-posedness of compressible {E}uler equations in a physical
  vacuum.
\newblock {\em Comm. Pure Appl. Math.}, 68(1):61--111, 2015.

\bibitem{Lax-1964}
Peter~D. Lax.
\newblock Development of singularities of solutions of nonlinear hyperbolic
  partial differential equations.
\newblock {\em J. Mathematical Phys.}, 5:611--613, 1964.

\bibitem{Lindblad-2003-1}
Hans Lindblad.
\newblock Well-posedness for the linearized motion of a compressible liquid
  with free surface boundary.
\newblock {\em Comm. Math. Phys.}, 236(2):281--310, 2003.

\bibitem{Lindblad-2003-2}
Hans Lindblad.
\newblock Well-posedness for the linearized motion of an incompressible liquid
  with free surface boundary.
\newblock {\em Comm. Pure Appl. Math.}, 56(2):153--197, 2003.

\bibitem{Lindblad-2005-1}
Hans Lindblad.
\newblock Well posedness for the motion of a compressible liquid with free
  surface boundary.
\newblock {\em Comm. Math. Phys.}, 260(2):319--392, 2005.

\bibitem{Lindblad-2005-2}
Hans Lindblad.
\newblock Well-posedness for the motion of an incompressible liquid with free
  surface boundary.
\newblock {\em Ann. of Math.}, 162(1):109--194, 2005.

\bibitem{Liu-1996}
Tai-Ping Liu.
\newblock Compressible flow with damping and vacuum.
\newblock {\em Japan J. Indust. Appl. Math.}, 13(1):25--32, 1996.

\bibitem{Majda-1986}
Andrew Majda.
\newblock Vorticity and the mathematical theory of incompressible fluid flow.
\newblock {\em Comm. Pure Appl. Math.}, 39(S, suppl.):S187--S220, 1986.
\newblock Frontiers of the mathematical sciences: 1985 (New York, 1985).

\bibitem{Makino-Ukai-Kawashima-1986}
Tetu Makino, Seiji Ukai, and Shuichi Kawashima.
\newblock Sur la solution \`a support compact de l'\'equations d'{E}uler
  compressible.
\newblock {\em Japan J. Appl. Math.}, 3(2):249--257, 1986.

\bibitem{Makino-Ukai-Kawashima-1987}
Tetu Makino, Seiji Ukai, and Shuichi Kawashima.
\newblock On compactly supported solutions of the compressible {E}uler
  equation.
\newblock In {\em Recent topics in nonlinear {PDE}, {III} ({T}okyo, 1986)},
  volume 148 of {\em North-Holland Math. Stud.}, pages 173--183. North-Holland,
  Amsterdam, 1987.

\bibitem{Rouchon-1991}
P.~Rouchon.
\newblock The {J}acobi equation, {R}iemannian curvature and the motion of a
  perfect incompressible fluid.
\newblock {\em European J. Mech. B Fluids}, 11(3):317--336, 1992.

\bibitem{Serre-1997}
Denis Serre.
\newblock Solutions classiques globales des \'equations d'{E}uler pour un
  fluide parfait compressible.
\newblock {\em Ann. Inst. Fourier (Grenoble)}, 47(1):139--153, 1997.

\bibitem{Serre-2015}
Denis Serre.
\newblock Expansion of a compressible gas in vacuum.
\newblock {\em Bull. Inst. Math. Acad. Sin. (N.S.)}, 10(4):695--716, 2015.

\bibitem{sideris-1985}
Thomas~C. Sideris.
\newblock Formation of singularities in three-dimensional compressible fluids.
\newblock {\em Comm. Math. Phys.}, 101(4):475--485, 1985.

\bibitem{sideris-2014}
Thomas~C. Sideris.
\newblock Spreading of the free boundary of an ideal fluid in a vacuum.
\newblock {\em J. Differential Equations}, 257(1):1--14, 2014.

\bibitem{Wu-1997}
Sijue Wu.
\newblock Well-posedness in {S}obolev spaces of the full water wave problem in
  {$2$}-{D}.
\newblock {\em Invent. Math.}, 130(1):39--72, 1997.

\bibitem{Wu-1999}
Sijue Wu.
\newblock Well-posedness in {S}obolev spaces of the full water wave problem in
  3-{D}.
\newblock {\em J. Amer. Math. Soc.}, 12(2):445--495, 1999.

\end{thebibliography}
\bibliographystyle{plain}

\end{document}